\newtheorem{satz}{Satz}
\newtheorem{lemma}[satz]{Lemma}
\newtheorem{prop}[satz]{Proposition}
\newtheorem{theo}[satz]{Theorem}
\newtheorem{cor}[satz]{Corollary}
\newcommand{\rbox}[1]{}
\theoremstyle{definition} 
\newtheorem{rem}[satz]{Remark}
\newtheorem{dfn}[satz]{Definition}
\newtheorem{example}[satz]{Example}
\newcommand{\R}{\ensuremath{\mathbb{R}}}
\newcommand{\Z}{\ensuremath{\mathbb{Z}}}
\newcommand{\N}{\ensuremath{\mathbb{N}}}
\newcommand{\ovec}[2][k]{#2_1,\ldots,#2_{#1}}
\newcommand{\nabl}[2][]{\frac{\nabla #1}{\partial #2}}
\newcommand{\dnd}[2][]{\frac{\partial#1}{\partial#2}}
\newcommand{\ddt}{\frac{d}{\, dt}}
\newcommand{\tr}{{\ensuremath{\mathrm{trace}\,}}}
\newcommand{\vol}{{\ensuremath{\mathrm{vol}}}}
\newcommand{\Hom}{{\ensuremath{\mathrm{Hom}}}}
\title{\textbf{ On the Evolution Equation for \\  Magnetic Geodesics}}
\author{Dennis Koh}
\begin{document}
\pagenumbering{roman}
\maketitle
  
\parindent0cm
\sloppy

\begin{abstract} Orbits of charged particles under the effect of a magnetic field are mathematically described by magnetic geodesics. They appear as solutions to a system of (nonlinear) ordinary differential equations of second order. But we are only  interested in periodic solutions. To this end, we study  the corresponding  system of (nonlinear) parabolic equations for closed magnetic geodesics and, as a main result, eventually prove the existence of  long time solutions. As generalization one can consider a system of elliptic nonlinear partial differential equations whose solutions describe the orbits of closed $p$-branes under the effect of a "generalized physical force". For the corresponding evolution equation, which is a system of parabolic nonlinear partial differential equations associated to the elliptic PDE, we can establish existence of short time solutions.
\end{abstract}

\section{Introduction}
\pagenumbering{arabic}

\textbf{General Assumptions.} All occurring manifolds, maps and tensors are assumed to be smooth unless otherwise stated. Also we explicitely note that all manifolds are assumed to be without boundary. 
Furthermore, we will frequently make use of "Einstein's sum convention": All sum signs are omitted if an index appears twice regardless of the position of the indices. Then one has to think of these sums to be performed. For example,
  $a_i  b_i$ is to mean $\sum_{i} a_i b_i$ and $R^l_{kij}g_{ln}g^{km}$ is to mean $\sum_{k,l}R^l_{kij}g_{ln}g^{km}$. Deviations of this convention will be made explicit by writing out the sum signs.\\
  
In this paper we investigate a certain evolution equation, which is motivated from String theory.  Namely, let $(\Sigma,g)$ and $(M,G)$ be  Riemannian manifolds, let $\Sigma$ be compact and oriented, $p=\dim(\Sigma)$. Furthermore, let $Z\in\Gamma(\mathrm{Hom}(\Lambda^{\! p} TM,TM))\cong \Gamma(\Lambda^{\! p}T^*\! M\otimes TM)$ be a tensor field such that \begin{equation}\label{Krummung}
\Omega:=G(\cdot,Z(\cdot))
\end{equation} is a closed $(p+1)$-form. Such a tensor field $Z\in\Gamma(\mathrm{Hom}(\Lambda^{\! p} TM,TM))$ coming from a $(p+1)$-form is called a \emph{$p$-force} and in the special case $p=1$ a \emph{Lorentz force}. For a map $\varphi\in C^2(\Sigma,M)$,  consider the system of nonlinear elliptic partial differential equations 
\begin{equation}\label{haha}
\tau(\varphi)=Z((d\varphi)^{\underline{p}}(\vol_g^\sharp)),
\end{equation}
which is just the Euler-Lagrange equation coming from a modified energy functional (see \cite{koh}, Chapter 2 and 3).  
In terms of a positively oriented local orthonormal frame $\{e_i\}$ of $\Sigma$, $\tau(\varphi)$ and $(d\varphi)^{\underline{p}}(\vol_g^\sharp)$ are given by  $\tau(\varphi)=(\nabla_{e_i}d\varphi)(e_i)$ and $(d\varphi)^{\underline{p}}(\vol_g^\sharp)=d\varphi(e_1)\wedge\ldots\wedge d\varphi(e_p)$, respectively. $(d\varphi)^{\underline{p}}(\vol_g^\sharp)$ can be interpreted as  vectorial volume element of $\Sigma$, being pushed forward to $M$. Now, if $p\geq 1$ is a positive integer and $\Sigma$ is connected, then a solution to  equation (\ref{haha}) describes the orbit of a closed $(p-1)$-brane  under the effect of a field strength $\Omega$. From elliptic regularity theory (see Appendix \ref{appB}, Theorem \ref{ellireg}) it follows that any $C^2$ solution of $(\ref{haha})$ is automatically $C^\infty$. The tensor field $Z:M\to\mathrm{Hom}(\Lambda^{\! p} TM,TM) $ can be interpreted as a physical force influencing the motion of the closed $(p-1)$-brane. In String theory a $p$-brane is an ''extended object'' of dimension $p$. That is, a $0$-brane corresponds to a particle, a $1$-brane to a string, $2$-brane to a membrane etc. In the special  case
$p=\dim(\Sigma)=1$,  locally we can parametrize $\Sigma$ by arc length, that is, we can always find local coordinates $\Phi: (-\epsilon,\epsilon)\to U\subset \Sigma , s\mapsto \Phi(s)$  of $\Sigma$ such that for the norm of the corresponding coordinate vector field  $g(\dnd{s},\dnd{s})=1$ holds.
 With respect to such coordinates, for $\varphi=\gamma: \Sigma\to M, s\mapsto \gamma(s)$, putting $\gamma'=\dnd[\gamma]{s}=d\gamma(\dnd{s})$,  equation (\ref{haha}) reduces to the \emph{equation for  magnetic geodesics}

\begin{equation}\label{huhu}
\frac{\nabla}{\partial s}\gamma' = Z(\gamma').
\end{equation}
  In this case a solution to the equation describes the  orbit of a charged particle under the effect of a magnetic field.  $Z$ can be interpreted as Lorentz force. For more on this topic, see e.g. \cite{bahri}, \cite{burns}, \cite{contreras}, \cite{taim} and the references therein. From now on,  whenever $\Sigma\cong S^1$, equations like (\ref{huhu}) and expression like $\gamma'=\dnd[\gamma]{s}=d\gamma(\dnd{s})$ are to be understood with respect to arc length parametrization. The problem of the existence of closed magnetic geodesics was originally posed by Novikov in early 1980s who, in particular, demonstrated its crucial difference from the closed geodesic problem and also introduced high-dimensional
analogs of it (see \cite{nov}, in the article these p-branes are also discussed).
 \\

To the elliptic PDE (\ref{haha}) one can associate an evolution equation and study the long time  behavior of its geometrical flow. Namely, we consider, for a  map $\varphi :\Sigma\times [0,T)\to M$, setting $\varphi_t(x)=\varphi(x,t)$, the initial value
problem of a system of nonlinear parabolic partial differential
equations 
\begin{equation}\label{jaja}
\left\{\begin{array}{lc }
\tau(\varphi_t)(x)=Z((d\varphi_t)^{\underline{p}}(\vol_g^\sharp))+\dnd[\varphi_t]{t}{(x)},\quad\quad
(x,t)\in\Sigma\times (0,T),\\
\varphi(x,0)=f(x),
\end{array}\right.
\end{equation} where  $\tau(\varphi_t)=\tr(\nabla d\varphi_t)$ and $f\in C^\infty(\Sigma,M)$ is a map
given as initial condition. One hopes that this problem possesses a solution for $T=\infty$ and that the limit map $\varphi_\infty=\lim_{t\to\infty}\varphi_t :\Sigma\to M$, provided that it exists, is a solution to (\ref{haha}). 
We will show that it depends on the initial condition $f$ whether the limit map $\varphi_\infty$, provided that it exists, satisfies equation (\ref{haha}) or not.
In $\dim(\Sigma)=p=1$ the above parabolic PDE (\ref{jaja}) is called the \emph{Evolution Equation for  Magnetic Geodesics}. A general introduction to nonlinear evolution equations and methods to prove existence of long time solutions are given in \cite{haraux}.
The method to find a solution to an  elliptic PDE by solving an associated parabolic (evolution) equation has been applied by Eells and Sampson  to prove the existence of harmonic maps. In the literature it is known as \emph{heat flow method}. We discuss this method in Section \ref{gmethod} and provide some Bochner-type formulas for later purposes. Good references to this topic are \cite{eecoll}, \cite{nish}, \cite{ura} and \cite{tay}.  The geometrical flow approach was also used to prove the existence of closed geodesics, i.e. to the classical problem. In particular, a new
proof of such hard result as of the Lyusternik-Schnirelmann theorem was obtained by Grayson (see \cite{cage},\cite{gray}).
\\

\vspace{-10pt}
In Section \ref{stime} we will show short time existence of the flow. The main ingredient of the proof is the Inverse Function Theorem from functional analysis. Regardless of the dimension and the curvature of $\Sigma$ and $M$, short time existence can always be guaranteed.  For the long time existence the Bochner formulas come into play. We will use them in Section \ref{ltime} to prove long time existence of the flow in $\dim(\Sigma)=1$. The maximum principle is used to obtain good a priori estimates from the Bochner formulas for the energy densities of a solution to the initial value problem (\ref{jaja}). In this way the growth rate of the solutions, as time $t$ increases, is controlled and blow ups are prevented.\\
\vspace{-10pt}

\section{Statement of the  results}  \label{results}

\begin{theo}[Long time existence]\label{longex} Let $\Sigma=S^1$ and $(M,G)$ be a compact Riemannian manifold. Moreover let $Z\in\Gamma(\mathrm{Hom}(TM,TM))$ be a Lorentz force. Set $\gamma_t(s)=\gamma(s,t)$ and $\gamma_t'=\dnd[\gamma_t]{s}=d\gamma_t(\dnd{s})$. Then for any $C^{2+\alpha}$ map $f\in C^{2+\alpha}(S^1,M)$, there exists a unique $\gamma\in C^{2+\alpha,1+\alpha/2}(S^1\times [0,\infty),M)\cap C^{\infty}(S^1\times (0,\infty),M)$ such that
\begin{equation}\label{pama}
\left\{\begin{array}{lc }
\frac{\nabla }{\partial s}\gamma'_t(s)=Z(\gamma'_t)(s)+\dnd[\gamma_t]{t}{(s)},\quad\quad
(s,t)\in S^1\times (0,\infty),\\
\gamma(s,0)=f(s),
\end{array}\right.
\end{equation} 
 holds.
\end{theo}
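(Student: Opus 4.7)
The strategy is to rule out finite-time blow-up by a priori estimates. By the short time existence result of Section \ref{stime}, the initial value problem (\ref{pama}) has a unique solution on a maximal interval $[0,T_{\max})$ with $T_{\max}\in(0,\infty]$. Assume for contradiction that $T_{\max}<\infty$. I would aim to prove a uniform $C^{2+\alpha,1+\alpha/2}$ bound on $\gamma$ on $S^1\times[0,T_{\max})$ depending only on $T_{\max}$, $\|f\|_{C^{2+\alpha}}$ and the geometric data $(M,G,Z)$. Such a bound produces a $C^{2+\alpha}$ limit $\gamma_{T_{\max}}:S^1\to M$, which can be used as new initial datum in the short time result and thereby extends the solution past $T_{\max}$, contradicting maximality. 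Uniqueness on $[0,\infty)$ then follows from uniqueness on overlapping short intervals.

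\textbf{Bochner estimate for the energy density.} The key a priori estimate concerns $e(s,t):=\tfrac12 G(\gamma_t',\gamma_t')$. Commuting $\tfrac{\nabla}{\partial t}$ with $\tfrac{\nabla}{\partial s}$ on the bundle $\gamma^*TM$ (no curvature term appears since $\gamma$ comes from a two-parameter family over $S^1\times\R$) and plugging in the equation yields, after using that $\Omega$ is a $2$-form and hence $G(Z(v),v)=\Omega(v,v)=0$ for all $v\in TM$, a Bochner-type identity of the shape
\begin{equation*}
\left(\frac{\partial}{\partial t}-\frac{\partial^2}{\partial s^2}\right)e \;=\; -\Big|\tfrac{\nabla}{\partial s}\gamma_t'\Big|^2 \;+\; G\!\left(Z(\gamma_t'),\tfrac{\nabla}{\partial s}\gamma_t'\right).
\end{equation*}
Compactness of $M$ and smoothness of $Z$ give $|Z(v)|_G\leq C|v|_G$ on $TM$, so Young's inequality absorbs the cross term and leaves
\begin{equation*}
\left(\frac{\partial}{\partial t}-\frac{\partial^2}{\partial s^2}\right)e \;\leq\; C^2 e.
\end{equation*}
The parabolic maximum principle on $S^1\times[0,T_{\max})$ then gives $\max_{S^1}e(\cdot,t)\leq e^{C^2 t}\max_{S^1}e(\cdot,0)$, hence a uniform $C^1$ bound on $\gamma$ up to time $T_{\max}$.

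\textbf{Higher regularity and conclusion.} With $|\gamma'|$ uniformly bounded, the equation (\ref{pama}) expressed in a local frame on $M$ becomes a semilinear parabolic system of the form $\partial_t\gamma^\alpha-\partial_s^2\gamma^\alpha = F^\alpha(\gamma,\gamma')$ whose nonlinearity $F$ is smooth and uniformly controlled on the range reached by the solution. Standard parabolic Schauder theory (cf.\ Appendix \ref{appB}) then promotes the $L^\infty$ bound on $\gamma'$ first to a $C^{\alpha,\alpha/2}$ bound on $\gamma'$, then to a $C^{2+\alpha,1+\alpha/2}$ bound on $\gamma$ on $S^1\times[0,T_{\max})$, with a standard bootstrap giving $C^\infty$ on $S^1\times(0,T_{\max})$; together with the $C^{2+\alpha}$ regularity inherited from the initial datum $f$ one obtains a $C^{2+\alpha}$-limit $\gamma_{T_{\max}}\in C^{2+\alpha}(S^1,M)$. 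Feeding this limit as new initial condition into the short time existence theorem contradicts the maximality of $T_{\max}$, so $T_{\max}=\infty$.

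\textbf{Main obstacle.} The delicate point is the Bochner estimate: the drift $Z(\gamma_t')$ is not the gradient of any scalar and there is no parabolic dissipation immediately visible for $e$. What saves the argument is precisely the defining property (\ref{Krummung}) of a Lorentz force, namely the skew-symmetry $\Omega(v,v)=0$, which eliminates the a priori dangerous term $G(Z(\gamma'),\gamma')$ in the computation of $\partial_t e-\partial_s^2 e$ and lets the maximum principle go through.
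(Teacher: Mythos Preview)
Your overall strategy coincides with the paper's: start the flow with the short time existence theorem, define the maximal time $T_{\max}$, derive uniform $C^{2+\alpha}$ control up to $T_{\max}$, take a limit, and restart to contradict maximality. Your Bochner computation for $e=\tfrac12|\gamma_t'|^2$ is also the paper's; the identity $(\partial_t-\partial_s^2)e=-|\tfrac{\nabla}{\partial s}\gamma_t'|^2+G(Z(\gamma_t'),\tfrac{\nabla}{\partial s}\gamma_t')$ is exactly what Corollary~\ref{energyestimates}(1') encodes (your parenthetical ``no curvature term'' is correct in outcome but slightly misstated: the point is that $\nabla_t\gamma'=\nabla_s\partial_t\gamma$ uses only torsion-freeness, and the $R^M$ and $\mathrm{Ric}^\Sigma$ terms in the general Bochner formula disappear because $\dim\Sigma=1$).

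Where you diverge from the paper is the \emph{bootstrap to $C^{2+\alpha}$}. You intend to go directly from the $L^\infty$ bound on $\gamma'$ to parabolic H\"older bounds via Schauder theory. The paper instead proves a \emph{second} Bochner inequality, for the kinetic energy density $\kappa=\tfrac12|\partial_t\gamma|^2$ (Corollary~\ref{energyestimates}(2') and Proposition~\ref{engyesti}(2)), obtaining $\sup|\partial_t\gamma|\leq C$. With \emph{both} $|\gamma'|$ and $|\partial_t\gamma|$ bounded, it then views the equation first as \emph{elliptic} in $s$ (right-hand side $\Pi_\gamma(d\gamma,d\gamma)+Z_\gamma(d\gamma)+\partial_t\gamma$ bounded), applies the elliptic Schauder estimate to get $\gamma(\cdot,t)\in C^{1+\alpha}$ uniformly, and only then feeds this into the parabolic Schauder estimate (Proposition~\ref{fundesti}). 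Your route is legitimate but requires a step not in the paper's toolbox (Appendix~\ref{appB} contains only Schauder, not $L^p$, estimates): from $F(\gamma,\gamma')\in L^\infty$ one first needs parabolic $L^p$ theory (or De~Giorgi--Nash) to get $\gamma\in C^{1+\alpha,(1+\alpha)/2}$ before Schauder applies. The paper's two-Bochner trick neatly sidesteps this by producing an $L^\infty$ bound on $\partial_t\gamma$ for free. Finally, uniqueness in the paper is not inherited from short-time uniqueness but is a separate consequence of the stability estimate Theorem~\ref{unique}.
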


\begin{theo}[Stability and uniqueness of solutions]\label{unique}  Assume that $\Sigma=S^1$. Let $(M,G)$ be a Riemannian manifold  and  $Z,Z'\in\Gamma(\Hom(TM,TM))$ be  Lorentz forces.  Let  $u,v\in C^0(S^1\times [0,T),M)\cap C^{2,1}(S^1\times (0,T),M)$. Setting $u_t(s)=u(s,t)$ and $v_t(s)=v(s,t),$ assume that $u$ satisfies the  evolution  equation for magnetic geodesics
\begin{equation}\label{paramag}
\frac{\nabla }{\partial s}\dnd[u_t]{s}(s)=Z(\dnd[u_t]{s})(s)+\dnd[u_t]{t}{(s)},\quad\quad
(s,t)\in S^1\times (0,T),
\end{equation} and similarly that $v$ satisfies $\mathrm{(\ref{paramag})}$ with $Z'$ instead of $Z$. Furthermore, assume that $Z$ and $Z'$ are bounded, i.e. $|Z|_{L^\infty(M,E)}, |Z'|_{L^\infty(M,E)}<\infty$.  Then for any $0<T_0<T$ there exists a constant $C=C(T_0)\geq 0$ such that \begin{equation}\label{stabil}|u_t-v_t|_{L^2(\Sigma,M)}^2\leq 2\pi e^{Ct}\Big(|u_0-v_0|_{L^\infty(\Sigma,M)}^2+t |Z-Z'|^2_{L^\infty(M,E)}\Big)\end{equation} holds for all $t\in [0,T_0]$. Here, $E=\mathrm{Hom}(\Lambda^{\! k}TM,TM)$,  $|Z|_{L^\infty(M,E)}=\sup_{M}\left\langle Z,Z\right\rangle^{1/2}$ and  $C=C(T_0)\geq 0$ is a nonnegative constant  depending on $T_0$ and other parameters. The dependence is clarified in the course of the proof. In particular, $u_0= v_0$  and $Z=Z'$ imply $u= v$ throughout $\Sigma\times [0,T)$.
\end{theo}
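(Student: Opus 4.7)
The stability estimate (\ref{stabil}) is of energy type, so I would prove it by deriving a differential inequality for
$$\phi(t) := \int_{S^1}|u(s,t) - v(s,t)|^2_{\R^N}\,ds,$$
where $u, v$ are viewed as $\R^N$-valued via a fixed isometric embedding $\iota:M \hookrightarrow \R^N$ (Nash); this is the natural interpretation of the notation $|u_t - v_t|_{L^2(\Sigma,M)}$. Since $|\Sigma| = 2\pi$, one has $\phi(0) \le 2\pi|u_0 - v_0|^2_{L^\infty}$. The target is
$$\phi'(t) \le C(T_0)\,\phi(t) + 2\pi |Z - Z'|^2_{L^\infty}, \quad t \in (0, T_0],$$
from which (\ref{stabil}) follows by Gr\"onwall, using $(e^{Ct}-1)/C \le t e^{Ct}$. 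In the embedding, any curve $\gamma$ in $M$ satisfies $\gamma'' = \tfrac{\nabla}{\partial s}\gamma' + \fund_\gamma(\gamma',\gamma')$ with $\fund_\gamma(\gamma',\gamma') \in (T_\gamma M)^\perp$, so (\ref{paramag}) becomes $\partial_t u = u'' - \fund_u(u',u') - Z(u')$, and analogously for $v$ with $Z'$.

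\textbf{Main steps.} Setting $w := u - v$ and differentiating under the integral,
$$\tfrac12\phi'(t) = \int_{S^1}\langle w, w''\rangle\,ds - \int_{S^1}\langle w, \fund_u(u',u') - \fund_v(v',v')\rangle\,ds - \int_{S^1}\langle w, Z_u(u') - Z'_v(v')\rangle\,ds.$$
Integration by parts on $S^1$ converts the first summand into the dissipative term $-\int|w'|^2\,ds$. The curvature term is handled via the submanifold fact that, on any compact $K \subset M$, there is $C_K > 0$ with $|(v-u)^\perp_u| \le C_K|v-u|^2$ for all $u, v \in K$ (second-order Taylor for $|v-u|$ small, compactness for $|v-u|$ bounded away from $0$); taking $K := u(S^1 \times [0,T_0]) \cup v(S^1 \times [0,T_0])$, compact by $C^0$-regularity, and using $\fund_u(u',u') \perp T_uM$, one obtains $|\langle w, \fund_u(u',u')\rangle| \le C\,|w|^2|u'|^2$, and similarly for $v$. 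The Lorentz term is decomposed as
$$Z_u(u') - Z'_v(v') = Z_u(u'-v') + (Z_u - Z_v)(v') + (Z - Z')_v(v'),$$
controlled respectively by $|Z|_{L^\infty}|w'|$, by Lipschitz continuity of $Z$ on $K$ (yielding $L_Z|w||v'|$), and by $|Z - Z'|_{L^\infty}|v'|$. Young's inequality absorbs $|w||w'|$ into $\tfrac12\int|w'|^2\,ds$ from the dissipative term and splits $|w||v'|\cdot|Z - Z'|_{L^\infty}$ pointwise into $\tfrac12|w|^2|v'|^2 + \tfrac12|Z-Z'|^2_{L^\infty}$. Collecting all contributions, and using boundedness of $|u'|, |v'|$ on $S^1 \times [0, T_0]$, gives the targeted inequality with $C(T_0)$ depending on $\sup_{S^1 \times [0,T_0]}(|u'|+|v'|)$, on $C_K$, on $L_Z$, and on $|Z|_{L^\infty}$; Gr\"onwall then closes the estimate. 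The final uniqueness assertion is immediate: taking $Z = Z'$ and $u_0 = v_0$ collapses the right-hand side to $0$, forcing $\phi \equiv 0$ and hence $u \equiv v$ by continuity.

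\textbf{Main obstacle.} The essential difficulty is the curvature term: $w = u - v$ is an ambient Euclidean vector rather than a tangent vector, so $\langle w, \fund_u(u',u')\rangle$ is only $|w|$-controlled a priori, and one must exploit the quadratic vanishing of the normal component of $w$ at $u \in M$ to gain the extra factor of $|w|$ needed to make Gr\"onwall close. A minor technicality is the limited regularity at $t = 0$ (only $C^0$): I would first derive the inequality on $[\epsilon, T_0]$, where $u', v'$ are bounded by compactness, and then let $\epsilon \to 0^+$ using $C^0$-continuity of $\phi$ at $0$.
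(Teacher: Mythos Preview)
Your proof is correct and follows the same overall architecture as the paper: isometric embedding into $\R^N$, an $L^2$-energy $\phi(t)=\int_{S^1}|u-v|^2$, a differential inequality, and Gr\"onwall. The one genuine difference is your treatment of the second fundamental form term. The paper decomposes $\Pi_u(du,du)-\Pi_v(dv,dv)$ algebraically as $(\Pi_u-\Pi_v)(du,du)+\Pi_v(du-dv,du)+\Pi_v(dv,du-dv)$ and applies the Mean Value Theorem to $\Pi_u-\Pi_v$, producing a cross term $c_5|du-dv||u-v|$ that must then be absorbed into the dissipative $-\int|w'|^2$ via Cauchy's inequality. You instead exploit the geometric fact that $\fund_u(u',u')\perp T_uM$ together with $|(v-u)^\perp_u|\le C_K|v-u|^2$ on compacta, which yields $|\langle w,\fund_u(u',u')\rangle|\le C|w|^2|u'|^2$ directly, without any $|w'|$ contribution from the curvature. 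Your route is cleaner here: the dissipative term is needed only to absorb the $Z_u(u'-v')$ piece of the Lorentz force, whereas in the paper it must also soak up curvature cross terms. Both arguments rely on the same supporting ingredients (boundedness of $|u'|,|v'|$ on $[0,T_0]$ via the energy estimate, Lipschitz control of $Z$ on a compact set), and your remark about the $t\to 0^+$ limit is a legitimate technical point that the paper leaves implicit.
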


\begin{cor} Let $\Sigma,M,Z,Z',u,v$ and the assumptions on them as above in Theorem \ref{unique}. If in addition $M$ is compact, then \emph{(\ref{stabil})} holds for all $t\in [0,T)$.
 \end{cor}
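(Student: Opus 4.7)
The plan is to bootstrap directly from Theorem~\ref{unique}. For every $T_0 \in (0,T)$ that theorem already furnishes a constant $C(T_0) \geq 0$ with which (\ref{stabil}) is valid on $[0,T_0]$. Since $[0,T) = \bigcup_{0<T_0<T}[0,T_0]$, the corollary reduces to showing that, under the added hypothesis that $M$ is compact, the constant $C(T_0)$ may be chosen independent of $T_0$; once this is done, (\ref{stabil}) immediately holds for every $t \in [0,T)$.

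To carry this out I would revisit the proof of Theorem~\ref{unique} and locate the precise place where $T_0$ enters the constant. Typically a stability estimate of this shape is obtained by differentiating $|u_t - v_t|_{L^2}^2$ in $t$, using the two parabolic equations together with a pointwise comparison of $\frac{\nabla}{\partial s} u_t' - \frac{\nabla}{\partial s} v_t'$, and then closing with Gronwall's inequality. The resulting $C$ is a composite of the $L^\infty$ bound on $Z - Z'$, a Lipschitz constant of $Z$ in the direction tangent to $M$, and bounds on the Riemann tensor of $M$ evaluated along the images of $u_t$ and $v_t$. Since $u,v$ are only assumed to be continuous in $t$, these bounds along the trajectories are a priori guaranteed only on a compact sub-interval $[0,T_0]$, and it is through this compactness of time that the $T_0$-dependence of $C$ arises.

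When $M$ is compact all such dependence disappears. The Riemann tensor of $(M,G)$, its covariant derivatives, the diameter, and the Lipschitz constants of $Z$ and $Z'$ are globally bounded on $M$ by absolute constants depending only on $(M,G,Z,Z')$; the hypothesis $|Z|_{L^\infty}, |Z'|_{L^\infty}<\infty$ is automatic from compactness and smoothness. Hence every quantity feeding into the Gronwall constant is controlled uniformly on $M$, so one can take $C=C(M,G,Z,Z')$ independent of $T_0$, and (\ref{stabil}) extends to the whole interval $[0,T)$ with this choice.

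The only real work, and the main (mild) obstacle, is the bookkeeping step of verifying that no hidden $T_0$-dependence remains in the proof of Theorem~\ref{unique} once the target is compact, i.e.\ that every constant produced along the way is either bounded in terms of $M$ and $(Z,Z')$ or arises from the explicit factor $e^{Ct}$ already displayed in (\ref{stabil}). This is essentially a re-reading of the earlier proof, not a new argument, which is why the statement is phrased as a corollary rather than a theorem.
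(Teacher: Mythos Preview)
Your approach is correct and is exactly the paper's: revisit the proof of Theorem~\ref{unique} and show that, when $M$ is compact, the constant $C(T_0)$ can be taken independent of $T_0$. Note, however, that the proof of Theorem~\ref{unique} is carried out extrinsically via the Nash embedding $M\hookrightarrow\R^q$, so the actual $T_0$-dependent ingredients are not the Riemann tensor you conjecture but (i) the radius $r$ of a ball $B(0,r)\subset\R^q$ containing $u(S^1\times[0,T_0])\cup v(S^1\times[0,T_0])$ (through $\sup_{\overline{B(0,r)}}|\nabla\tilde Z|$ and derivatives of $\pi$) and (ii) the suprema of the energy densities $e(u_t),\,e(v_t)$ on $[0,T_0]$; compactness of $M$ lets one choose $B(0,r)\supset M$ once and for all, and Proposition~\ref{engyesti} bounds the energy densities uniformly on $[0,T)$.
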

\begin{proof}  Since $M$ is compact, the ball $B(0,r)$ in the  proof of Theorem 2 can be chosen  such that $M\subset B(0,r)\subset\R^q$. The boundedness of $Z$ and $Z'$ (need not to be assumed, but follows from the compactness of $M$)  implies that the energy densities $e(u_t)$ and $e(v_t)$ can be globally estimated on $[0,T)$ by Proposition \ref{engyesti}. Consequently the constant $C\geq 0$ from the above proof can be chosen to be independent of $T_0$.\end{proof}

\begin{cor}\label{liegroup} Let  $(M,G)$ be a  Riemannian manifold  and $Z\in\Gamma(\Hom(TM,TM))$ be a Lorentz force. Furthermore, let $H$ be a discrete group of isometries of $(M,G)$ acting properly discontinuously  on $M$. If $Z$ is $H$-invariant, i.e. $dh\circ Z=Z\circ dh$ for all $h\in H$, and the quotient $M/H$ is compact, then for any $C^{2+\alpha}$ map $f\in C^{2+\alpha}(S^1,M)$, there exists a unique long time solution $\gamma\in C^{2+\alpha,1+\alpha/2}(S^1\times [0,\infty),M)\cap C^{\infty}(S^1\times (0,\infty),M)$ to the \emph{IVP} $\mathrm{(\ref{pama})}$ in $M$.
\end{cor}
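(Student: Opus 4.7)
The plan is to reduce to Theorem~\ref{longex} by descending the problem to the compact Riemannian quotient $\bar M := M/H$ and then lifting the resulting flow back to $M$ through the covering projection $\pi\colon M\to\bar M$.

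First, I would endow $\bar M$ with the unique Riemannian metric $\bar G$ for which $\pi$ is a local isometry; this is well-defined because $H$ acts properly discontinuously by isometries, and $\bar M$ is compact by hypothesis. The equivariance $dh\circ Z=Z\circ dh$ for all $h\in H$ says exactly that, for $v_1,v_2\in TM$ lying over the same vector in $T\bar M$, one has $d\pi(Z(v_1))=d\pi(Z(v_2))$; hence $Z$ descends to a tensor field $\bar Z\in\Gamma(\Hom(T\bar M,T\bar M))$ characterized by $d\pi\circ Z=\bar Z\circ d\pi$. Since $\pi$ is a local isometry, $\Omega=G(\cdot,Z(\cdot))$ is the pullback of $\bar\Omega=\bar G(\cdot,\bar Z(\cdot))$; closedness of $\bar\Omega$ follows from closedness of $\Omega$ because $\pi$ is a surjective local diffeomorphism. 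Thus $\bar Z$ is a Lorentz force on the compact manifold $(\bar M,\bar G)$.

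Next, set $\bar f:=\pi\circ f\in C^{2+\alpha}(S^1,\bar M)$ and apply Theorem~\ref{longex} to $(\bar M,\bar G,\bar Z)$ with initial condition $\bar f$, obtaining a unique $\bar\gamma\in C^{2+\alpha,1+\alpha/2}(S^1\times[0,\infty),\bar M)\cap C^\infty(S^1\times(0,\infty),\bar M)$ solving $\mathrm{(\ref{pama})}$ on $\bar M$ with respect to $\bar Z$. To lift $\bar\gamma$ back to $M$, I would invoke the homotopy lifting property of the covering $\pi$: the map $f$ is a continuous lift of $\bar\gamma(\cdot,0)=\bar f$, so there is a unique continuous $\gamma\colon S^1\times[0,\infty)\to M$ with $\pi\circ\gamma=\bar\gamma$ and $\gamma(\cdot,0)=f$. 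Because $\pi$ is a local diffeomorphism, $\gamma$ inherits the stated regularity of $\bar\gamma$; because $\pi$ is a local isometry intertwining $Z$ and $\bar Z$, applying $d\pi$ to the two sides of $\mathrm{(\ref{pama})}$ for $\gamma$ produces the equation satisfied by $\bar\gamma$, and the injectivity of $d\pi$ fiberwise allows one to reverse this and conclude that $\gamma$ itself solves $\mathrm{(\ref{pama})}$ on $M$ with $Z$.

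For uniqueness in $M$, any two long-time solutions $\gamma_1,\gamma_2$ project under $\pi$ to solutions on $\bar M$ sharing the initial datum $\bar f$, and so coincide by the uniqueness clause in Theorem~\ref{longex}; then $\gamma_1$ and $\gamma_2$ are two continuous lifts of $\bar\gamma$ agreeing at $t=0$, and the uniqueness of lifts for covering maps gives $\gamma_1=\gamma_2$. The only step that requires real care is the passage to the quotient: one must check that the equivariance $dh\circ Z=Z\circ dh$ yields a genuinely well-defined Lorentz force $\bar Z$ with closed $\bar\Omega$, and that the topological lift preserves the parabolic and smooth regularity. Both are routine once $\pi$ is recognized as a Riemannian covering, so I do not expect a serious obstacle beyond this bookkeeping.
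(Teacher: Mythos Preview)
Your argument is correct and follows exactly the same strategy as the paper: push the IVP down to the compact quotient $M/H$, apply Theorem~\ref{longex} there, and lift the solution back to $M$ via the Riemannian covering $\pi$. The paper's proof is a two-line sketch of precisely this outline, so your version simply supplies the details (well-definedness of $\bar Z$, homotopy lifting, regularity, uniqueness of lifts) that the paper leaves implicit.
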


\begin{proof} The result follows immediately  by pushing the entire initial value problem in $M$ down to $M/H$ (equipped with the unique structure of a Riemannian manifold). Applying Theorem \ref{longex} to the corresponding initial value problem in $M/H$ yields a unique solution which can be lifted to a unique solution to the original initial value problem on $M$.
\end{proof}

\begin{example} Let $(M,G)$ be the three-dimensional Euclidean space $\R^3$ and
$B\in\R^3$ be a parallel vector field in $\R^3$, (all tangent spaces of $\R^3$ are identified by parallel transport). We define a skew-symmetric bundle homomorphism $Z:T\R^3\to T\R^3$, $Z(v)=v\times B$ for all $v\in\R^3$, by means of the vector product. From $\nabla Z=0$ we see that, in fact, $Z$ comes from a closed two-form $\Omega$ via (\ref{Krummung}). Since $Z$ is translation-invariant and  the three-torus $T^3=\R^3/\Z^3$ is compact, we deduce long time existence of solutions to the IVP (\ref{pama}) from Corollary \ref{liegroup}. This holds more generally for any $\Z^3$-invariant Lorentz force $Z$.
\end{example}

\begin{rem} The compactness of $\Sigma$ in Theorem 1 cannot be dropped. In general, the lifetime $T$ of a solution to the IVP (\ref{pama}) for non-compact $\Sigma$ may be finite. For example, let $\Sigma=M=\R$ and $T>0$ be a positive number. Consider the function $u: \R\times [0,T)\to \R$ defined by
\[u(s,t)=\frac{s}{T-t}.\] This is a smooth function on $\R\times [0,T)$ which blows up as $t\to T$. Let $Z:T\R\to T\R$ be the bundle homomorphism defined by $Z_s(v):=-sv$, $(s,v)\in \R\times\R$. The function $u$ solves the IVP (\ref{pama}) on $\R\times (0,T)$, with initial condition $u(s,0)=s/T$ and the above defined $Z$. In this case the parabolic equation just reads \[v'=-uv+\dot{u},\quad \mbox{on }\;  \R\times (0,T),\] where $\dot{u}=\dnd[u]{t}$, $v=\dnd[u]{s}$ and $v'=\frac{\partial^2 u}{\partial s^2}$. 
This demonstrates that the lifetime of solutions to the IVP (\ref{pama}) can be finite for non-compact $\Sigma$.  
\end{rem}

\begin{cor}
 Let $\Sigma=S^1$ and $(M,G)$ be a Riemannian manifold. Furthermore, let $Z\in\Gamma(\mathrm{Hom}(TM,TM))$ be a Lorentz force and  $\gamma\in C^{2,1}(S^1\times [0,T),M)\cap C^\infty(S^1\times (0,T),M)$ be a solution to the \emph{IVP} $\mathrm{(\ref{pama})}$, where $T=\sup\,\{ t\in [0,\infty)\,|\, (\ref{pama}) \mbox{ has a solution in } S^1\times [0,t]\} $. Set $\gamma_t(s)=\gamma(s,t)$. If $T<\infty$, then for any compact subset $K\subset M$ and any $0<T_0<T$, there exists a $t\in (T_0,T)$ such that $\gamma_t(S^1)\cap (M-K)\neq\emptyset$. Said in words: If the lifetime $T$ of  a solution $\gamma$ is finite, then it leaves any compact subset of $M$, or equivalently, if a solution $\gamma$ stays its entire life in a compact set, then its lifetime $T=\infty$.
 \end{cor}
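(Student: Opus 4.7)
I would prove the contrapositive by contradiction. Suppose $T<\infty$ and that there exist a compact set $K\subset M$ and some $T_0\in (0,T)$ with $\gamma_t(S^1)\subset K$ for all $t\in [T_0,T)$. The goal is to extend $\gamma$ past $T$, contradicting the defining property of $T$. The overall strategy mimics the standard continuation argument for geometric evolution equations: derive uniform a priori estimates up to the singular time, extract a limit, and restart short-time existence.

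First, restrict attention to the interval $[T_0,T)$. Since $\gamma_t(S^1)\subset K$ and $Z$ is smooth, the quantity $|Z|$ is bounded on the image $\gamma([T_0,T)\times S^1)\subset K$. This means the hypotheses needed for the Bochner-type energy estimate of Proposition \ref{engyesti} (the same estimate used in the proof of Theorem \ref{longex}) are met on $K$, even though $M$ itself need not be compact. Applying the maximum principle to the parabolic Bochner inequality for $e(\gamma_t)=\tfrac{1}{2}|\gamma_t'|^2$ yields a uniform bound $|\gamma_t'|_{L^\infty(S^1)}\leq C_1$ for $t\in[T_0,T)$, with $C_1$ depending on $K$, $Z|_K$, and $e(\gamma_{T_0})$.

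Next, I would upgrade this $C^1$ bound to a uniform $C^{2+\alpha,1+\alpha/2}$ bound on $S^1\times[T_0,T)$. Embedding $M\hookrightarrow\R^q$ isometrically (Nash) and using that $\gamma$ stays in the compact piece $K$, the parabolic equation $\dnd[\gamma_t]{t}=\tau(\gamma_t)-Z(\gamma_t')$ becomes, in the ambient coordinates, a semilinear parabolic system with coefficients bounded in terms of $K$, $Z|_K$, and $C_1$. Parabolic Schauder estimates applied in local charts then produce uniform $C^{2+\alpha,1+\alpha/2}$ bounds on compact subintervals ending at $T$; by Arzelà–Ascoli the family $\{\gamma_t\}_{t\to T}$ subconverges, and hence converges, to a limit map $\gamma_T\in C^{2+\alpha}(S^1,M)$ with image still in $K$.

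Finally, apply the short time existence result of Section \ref{stime} with initial datum $\gamma_T\in C^{2+\alpha}(S^1,M)$: there is $\varepsilon>0$ and a solution $\tilde\gamma\in C^{2+\alpha,1+\alpha/2}(S^1\times[T,T+\varepsilon),M)$ to (\ref{pama}) with $\tilde\gamma(\cdot,T)=\gamma_T$. Concatenating $\gamma$ on $[0,T)$ with $\tilde\gamma$ on $[T,T+\varepsilon)$ and invoking the uniqueness statement of Theorem \ref{unique} to check compatibility produces a solution on $[0,T+\varepsilon)$, contradicting the maximality of $T$. The main obstacle is the bootstrap from the $L^\infty$-estimate on $|\gamma_t'|$ to $C^{2+\alpha}$-control of the limit $\gamma_T$, since $M$ is not assumed compact; this is precisely where the hypothesis $\gamma_t(S^1)\subset K$ is essential, allowing the estimates from the compact case to be imported on the relevant part of $M$.
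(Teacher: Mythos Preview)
Your plan is correct and follows essentially the same route as the paper's proof: negate the conclusion to trap the flow in a compact set $K$, run the energy and Schauder estimates of Proposition~\ref{engyesti} and Proposition~\ref{fundesti} with $L^\infty(M,E)$-norms replaced by $L^\infty(K,E|_K)$-norms, then extend past $T$ via Theorem~\ref{shor} as in the proof of Theorem~\ref{longex} to obtain a contradiction. The only cosmetic difference is that the paper absorbs your interval $[0,T_0]$ into $K$ (since $\gamma(S^1\times[0,T_0])$ is compact) and works on all of $[0,T)$, whereas you restrict to $[T_0,T)$; both are fine.
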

 
 \begin{proof} Let $T<\infty$ and assume that the conclusion is false. Then there  exist a compact subset $K\subset M$ such that $\gamma(S^1\times [0,T))\subset K$ holds. Set $E=\mathrm{Hom}(\Lambda^{\! k}TM,TM)$.
Now, we proceed quite literally as in the proof of Proposition \ref{fundesti} and obtain
\[  |\gamma(\cdot,t)|_{C^{2+\alpha}(S^1,M)}+\Big|\dnd[\gamma]{t}(\cdot,t)\Big|_{C^{\alpha}(S^1,M)}\leq C.\]
Here, $C=C(\Sigma,K,M,Z,f,\alpha,T)$ is a constant only depending on $\Sigma,K, M,Z, f,\alpha$ and $T$.  The only difference is that in all estimates (energy estimates etc.) one has to replace all occurrences of $|\cdot|_{L^\infty(M,E)}$ by $|\cdot|_{L^\infty(K,E|_K)}$. Obviously   (\ref{bounded}) holds since $\gamma(S^1\times [0,T))\subset K$. Then similarly as in the proof of Theorem \ref{longex} one extends the solution to $S^1\times [0,T+\epsilon]$ (for $\epsilon>0$ sufficient small) and produces a contradiction to the definition of $T$. \end{proof}

\section{The heat flow method}  \label{gmethod}

\textbf{Notational convention.} Throughout the whole paper let $(\Sigma^k,g)$ and  $(M^n,G)$ be Riemannian manifolds. Furthermore, let $\Sigma$ be compact and oriented and let  $Z\in\Gamma(\mathrm{Hom}(\Lambda^{\!  k} TM,TM))$ be a $k$-force determined by some closed $(k+1)$-form $\Omega$ as in (\ref{Krummung}). Henceforth, we abbreviate $Z((d\varphi)^{\underline{k}})=Z((d\varphi)^{\underline{k}}(\vol_g^\sharp))$ and $Z_\varphi((d\varphi)^{\underline{k}})=Z_{\varphi}((d\varphi)^{\underline{k}}(\vol_g^\sharp))$. For the sake of simplicity all appearing metrics and covariant derivatives are denoted by $\langle\cdot,\cdot\rangle$ and $\nabla$, respectively.\\

In 1964 Eells and Sampson proved the existence of harmonic maps (see \cite{eecoll}) by
the heat flow method, that is, they demonstrated that the time limit of the solution to an associated evolution equation is a harmonic map.  We would like to use this technique to prove the
existence of a solution to equation (\ref{haha}) above. It turns out that in general this method does not yield a solution to our problem. On the contrary, we will see that the solvability rather depends on the initial value for the associated evolution equation. However, short time existence of  solutions to the associated evolution equation can always be shown, regardless of the dimension of $(\Sigma,g)$ and $(M,G)$ and without making any further assumptions, excepting that $\Sigma$ is required to be compact and oriented. On the other hand, only if $\dim(\Sigma)=1$ and assuming that $M$ is compact, we are able to verify existence of long time solutions. So, we consider
for a map $\varphi :\Sigma\times [0,T)\to M$, setting $\varphi_t(x)=\varphi(x,t)$, the initial value
problem (IVP) for the  system of nonlinear parabolic partial differential
equations \rbox{iniprob}
\begin{equation}\label{iniprob}
\left\{\begin{array}{lc }
\tau(\varphi_t)(x)=Z((d\varphi_t)^{\underline{k}})(x)+\dnd[\varphi_t]{t}{(x)},\quad\quad
(x,t)\in\Sigma\times (0,T),\\
\varphi(x,0)=f(x),
\end{array}\right.
\end{equation} where  $\tau(\varphi_t)=\tr(\nabla d\varphi_t)$ and $f\in C^\infty(\Sigma,M)$ is a map
given as initial condition. We assume that
\[\varphi\in C^0(\Sigma\times [0,T),M)\cap
C^\infty(\Sigma\times(0,T),M).\]

Before going into the details of the proofs, we compute the following.
\begin{example} Let $\Sigma=S^1$ the unit circle and $M=T^2=S^1\times S^1$ the two-dimensional  standard torus with the natural induced metrics. 
Then for a map $\gamma :S^1\times [0,\infty)\to M$, setting $\gamma_t(s)=\gamma(s,t)$, the IVP (\ref{iniprob}) takes the form
\begin{equation}
\left\{\begin{array}{lc }
\nabl{s}\gamma_t'(s)=Z(\gamma_t')(s)+\dnd[\gamma_t]{t}{(s)},\quad\quad
(s,t)\in S^1\times (0,\infty),\\
\gamma(s,0)=c(s),  \tag{*}
\end{array}\right.
\end{equation}  where $\gamma_t'(s)=\dnd[\gamma]{s}(s,t)$ and $c:S^1\to T^2$ is a smooth initial curve. Let $\hat{M}=S^1\times\R\subset\R^3$ be the standard cylinder with  metric induced from $\R^3$ and, denoting the standard coordinates of $\R^3$ by $(x,y,z)$, let the $z$-axis be the axis of symmetry. For the radial vector field $\hat{B}:\R^3\to\R^3$ , given by
\[\hat{B}:(x,y,z)^t \mapsto (x,y,0)^t,\] we define a skew-symmetric bundle homomorphism $\hat{Z}:T\hat{M}\to T\hat{M}$ by $\hat{Z}(v)=v\times \hat{B}$ by means of the vector product of $\R^3$, (all tangent spaces of $\R^3$ are identified by parallel transport). We note that $\nabla \hat{Z}=0$, implying that $\hat{Z}$ defines a closed $2$-form $\hat{\Omega}$ via (\ref{Krummung}), and consider for a map $\gamma :S^1\times [0,\infty)\to \hat{M}\subset\R^3$ the initial value problem
\begin{equation}
\left\{\begin{array}{lc }
\nabl{s}\gamma_t'(s)=\hat{Z}(\gamma_t')(s)+\dnd[\gamma_t]{t}{(s)},\quad\quad
(s,t)\in S^1\times (0,\infty),\\
\gamma(s,0)=c(s). \tag{**}
\end{array}\right.
\end{equation}  Since $\hat{B}$ is invariant under $z$-translations,  $\hat{Z}$ descends to a well-defined parallel skew-symmetric bundle homomorphism $Z: TM\to TM$ on the Torus $M=\hat{M}/\!\!\sim\, = S^1\times S^1$, regarded as quotient of $\hat{M}$ by moding out the $\Z$-action on the second factor of $\hat{M}=S^1\times \R$. Hence, the entire initial value problem $(**)$ on the cylinder $\hat{M}$ descends to a corresponding initial value problem $(*)$ on the torus $M=T^2$. So, for simplicity we will do all our computations on the cylinder $\hat{M}$. Passing to the quotient $M=\hat{M}/\!\!\!\sim$ then yields a corresponding result for the torus. Expressing $\gamma_t(s)$ and $\hat{B}$ in cylindrical coordinates
\[\gamma_t(s)=\left(
\begin{array}{c}
\cos(\varphi(s,t))\\\sin(\varphi(s,t))\\z(s,t)\\
\end{array}
\right)\quad\mbox{and}\quad 
\hat{B}(r,\varphi,z)=\left(
\begin{array}{c}
r\cos(\varphi)\\ r\sin(\varphi)\\0\\
\end{array}
\right), \] $r\in(0,\infty)$, $\varphi\in(-\pi,\pi)$, $z\in(-\infty,\infty)$, a straightforward computation shows that, for functions $\varphi,z : S^1\times [0,\infty)\to\R$,   $(**)$ is equivalent to the following system of partial differential equations
\begin{equation}
\left\{\begin{array}{lc }
\varphi''(s,t)=z'(s,t)+\dot{\varphi}(s,t),\quad\quad
(s,t)\in [0,2\pi]\times (0,\infty),\\z''(s,t)=-\varphi'(s,t)+\dot{z}(s,t),\quad\quad
(s,t)\in [0,2\pi]\times (0,\infty),\\
\varphi(s,0)=\varphi_0(s),\\z(s,0)=z_0(s). \tag{+}
\end{array}\right.
\end{equation} Here, we identify $S^1\cong \R/2\pi\Z$, i.e. we regard $\varphi$ and $z$ as functions defined on $\R\times [0,\infty)$, which are $2\pi$-periodic in the first argument. Furthermore, we  abbreviate $\varphi''=\frac{\partial^2\varphi}{\partial s^2}$, $\varphi'=\dnd[\varphi]{s}$ and $\dot{\varphi}=\dnd[\varphi]{t}$ (in the same way for $z$) and $\varphi_0,\,z_0$ are initial conditions. Now, let us explicitely calculate the flow for the initial conditions 
\begin{eqnarray*}
\textrm{a)} \;\left\{\begin{array}{l}\varphi_0(s)=A\cos(s)\\z_0(s)=B\sin(s)\end{array}\right.\quad 
\mbox{ and }\quad \textrm{b)}\;\left\{\begin{array}{l}\varphi_0(s)=s\\z_0(s)=\mu\cos(s),\end{array}\right. 
 \end{eqnarray*} where $\mu,A,B\geq 0$ are nonnegative numbers and  the function $\varphi_0$ from initial condition b)  is to be understood as being defined on $[0,2\pi]$; in terms of $\gamma_0(s)=(\cos(s),\sin(s), \mu\cos(s))$ we see that b) is a well-defined smooth initial condition $\gamma_0: S^1\cong\R/2\pi\Z\to S^1\times \R$. To this end, let us introduce the complex variable $\xi=\varphi+iz$. Here, $i$ denotes the imaginary unit.  Then  system (+) reduces to a single partial differential equation
 
 \begin{equation}
\left\{\begin{array}{lc }
\dot{\xi}(s,t)=\xi''(s,t)+i\xi'(s,t),\quad\quad
(s,t)\in [0,2\pi]\times (0,\infty),\\
\xi(s,0)=\varphi_0(s)+iz_0(s). \tag{++}
\end{array}\right.
\end{equation} To solve this we try a power series ansatz
\[\xi(s,t)=\sum_{n=0}^\infty a_n(s)t^n.\] Plugging this into (++) yields the following recursion formula for the coefficients $a_n$ for all $n\geq 1$:
\begin{equation}\label{rekursion}
 a_{n}=\frac{ a''_{n-1}+i a'_{n-1}}{n},\quad\quad a_0=\varphi_0+iz_0.  \tag{R}
\end{equation}

ad a): If $a_0(s,t)=A\cos(s)+iB\sin(s)$,  for $n\geq 1$ we get
\[a_n(s)=\frac{(A+B)}{2}\frac{(-2)^n}{n!}\exp(is),\] and consequently,
\begin{align*}
\xi(s,t)&=a_0(s)+\sum_{n=1}^\infty \frac{(A+B)}{2}\frac{(-2)^n}{n!}\exp(is)\\
&=A\cos(s)+iB\sin(s)-\frac{(A+B)}{2}\exp(is)+\frac{(A+B)}{2}\exp(is)\exp(-2t)\\
&=\frac{(A-B)}{2}\exp(-is)+\frac{(A+B)}{2}\exp(is)\exp(-2t).
\end{align*} We see that the limit as $t\to\infty$ exists, namely 
\[\xi_\infty(s)=\lim_{t\to\infty}\xi(s,t)=\frac{(A-B)}{2}\exp(-is).\] Also one readily verifies that $\xi''_\infty+i \xi'_\infty=0$ holds, i.e. on the torus $T^2=\hat{M}/\!\!\sim$ the corresponding loop $\gamma_\infty=\lim_{t\to\infty}\gamma_t:S^1\to M=T^2$ satisfies the equation for magnetic geodesics
\[\nabl{s}\gamma'_\infty=Z(\gamma'_\infty).\]
 
 ad b): If $a_0(s,t)=s+i\mu\cos(s)$,   we get $a_1(s)=i(1-\mu\exp(is))$, and for $n\geq 2$ 
 \[a_n(s)=\frac{i\mu}{2}\frac{(-2)^n}{n!}\exp(is),\]
  and thus,
\[\xi(s,t)=s+it+i\mu\cos(s)+\frac{i\mu}{2}\exp(is)\Big[\exp(-2t)-1\Big].\] On the torus $T^2=\hat{M}/\!\!\sim$ the subsequence $\{\xi(s,2\pi n)\}_{n\geq 0}$  corresponds to a constant sequence, namely to a loop $\gamma_\infty: S^1\to T^2$, surrounding the neck of the torus. (see \textsc{Figure 4.1}) The limit of any other convergent subsequence is just a translation of that loop $\gamma_\infty$ along the "soul" of the torus, i.e. a translation in $t$-direction. However, since $\xi''+i\xi'=i\neq 0$, we see that a limit loop $\gamma_\infty$ can never satisfy the equation for magnetic geodesics in contrast to case a). \\

\begin{pspicture}(-3,-3)(3,3)
\rput(0.3,2){ a): $A\neq B$}
\psellipse(0.5,-0.4)(3,2)
\psbezier[linewidth=1pt]{-}(-1.3,-0.2)(-0.9,-0.9)(1.8,-0.9)(2.1,-0.2)
\psbezier[linewidth=1pt]{-}(-1.07,-0.4)(-0.7,0.3)(1.6,0.3)(1.87,-0.4)
\psellipse[linecolor=red](0.5,-1.55)(1.2,0.7)
\psellipse[linestyle=dashed,linecolor=red](0.5,-1.55)(1,0.6)
\psellipse[linestyle=dashed,linecolor=red](0.5,-1.55)(0.7,0.5)
\pscircle[linecolor=red](0.5,-1.55){0.4}
\rput(-1.1,-1.2){\textcolor{red}{\footnotesize{$t=0$}}}
\rput(0.5,-1.55){\textcolor{red}{\tiny{$t=\infty$}}}

\rput(9.3,2){ b): $\mu=0$}
\psellipse(9.5,-0.4)(3,2)
\psbezier[linewidth=1pt]{-}(7.7,-0.2)(8.1,-0.9)(10.8,-0.9)(11.1,-0.2)
\psbezier[linewidth=1pt]{-}(7.93,-0.4)(8.3,0.3)(10.6,0.3)(10.87,-0.4)
\psbezier[linecolor=red,linewidth=1pt]{-}(9.6,-0.72)(10.1,-1.2)(10.1,-1.9)(9.6,-2.38)
\psbezier[linecolor=red,linestyle=dashed,linewidth=1pt]{-}(9.6,-0.72)(9.1,-1.2)(9.1,-1.9)(9.6,-2.38)
\rput(10.7,-1.2){\textcolor{red}{\footnotesize{$t=2\pi n,$}}}
\rput(10.63,-1.5){\textcolor{red}{\footnotesize{$n\in \N$}}}
\rput(5,-3.5){\textsc{figure 4.1.}  The flow of the evolution equation}
\end{pspicture} \\\\

We may summarize as follows:\\
 On the torus we have computed the flow of the parabolic equation for magnetic geodesics for two families of initial conditions. For an  ellipse $c:S^1\to T^2$ as initial condition (case a)) not enclosing the neck of the torus, the limit loop $\gamma_\infty$, as $t\to\infty$, exists and is a magnetic geodesic. In the case b) when the initial curve $c:S^1\to T^2$ forms an ellipse enclosing the neck of the torus,  there exist convergent subsequences; but then a limit loop can not be a magnetic geodesic. Hence, we see that the existence of a convergent subsequence such that its limit curve satisfies the equation for magnetic geodesics depends on the initial condition. However, for the cylinder $S^1\times\R$ and the torus $S^1\times S^1$, respectively, long time existence of the flow is guaranteed for any initial condition by Theorem \ref{liegroup} and Theorem \ref{longex}, respectively.
\end{example}

In general, to show existence of solutions to the equation (\ref{haha}) one has to verify the steps of the following program:
\begin{enumerate}
\item Show existence of short time solutions to the parabolic initial value problem (\ref{iniprob}).
\item Rule out occurrence of blow ups in finite time, i.e. show  existence of long time solutions to the initial value problem (\ref{iniprob}).
\item Show convergence $\varphi_t\to \varphi_\infty$ as $t\rightarrow \infty$ .
\item If the limit $\varphi_\infty$ exists, show that $\varphi_\infty$ satisfies (\ref{haha}).
\end{enumerate}
 As seen from the above  example,  it depends on the initial condition 
whether a limit map $\varphi_\infty$, provided that it exists, is a solution to (\ref{haha}) or not. Consequently one cannot expect a general existence result for generalized harmonic maps in the sense of Eells and Sampson. So, we restrict ourselves to tackle the long time existence problem, i.e.  in the following sections we are going to carry out the first and the second issue of the previous program. The strategy is to derive some Bochner-type formulas and to use the maximum principle for parabolic equations to get a priori estimates which allow to control the growth rate of solutions to the IVP  (\ref{iniprob}). \\
The estimates for the energy densities will show that in $\dim(\Sigma)=k=1$ everything is fine. For $\dim(\Sigma)>1$ we would have to deal with "bad" terms that possibly could destroy the long time behavior of our solutions whereas short time existence can be guaranteed without any restrictions on the dimension  of $\Sigma$ and $M$.
  
For a given solution $\varphi$ of  (\ref{iniprob}) we set $\varphi_t(x)=\varphi(x,t)$ and define
\[e(\varphi_t):=\frac{1}{2}|d\varphi_t|^2,\quad\mbox{(energy density)} \] 
\[E(\varphi_t):=\int_\Sigma e(\varphi_t)\,d\vol_g, \quad\mbox{(energy)}\]
\[\kappa(\varphi_t):=\frac{1}{2}\Big|\frac{\partial\varphi_t}{\partial t}\Big|^2,
\quad\mbox{(kinetic energy density)}\]
\[K(\varphi_t):=\int_\Sigma \kappa(\varphi_t)\,d\vol_g.\quad\mbox{(kinetic energy)}\]

Now, we state a Weitzenb\"ock formula for vector bundle valued $1$-forms (see Appendix \ref{appA} a)). 
\begin{prop}[Weitzenb\"ock formula] Let $\omega$ be a $1$-form on a Riemannian manifold $(M,g)$ with values 
in a Riemannian vector bundle $(E,\nabla^E,h)$. Then 
\begin{align*}
\Delta\omega=\bar{\Delta}\omega+S_\omega.
\end{align*} Here, $S_\omega\in\Gamma(T^*\! M\otimes E)$ is given by \rbox{weitzen}
\begin{equation}\label{weitzen}
S_\omega(X)=(R(X,e_i)\omega)(e_i),
\end{equation} where  $\{e_i\}$ is a local orthonormal frame on $M$, $X\in\Gamma(TM)$ and $R$ is the curvature tensor corresponding to the connection on $T^*\! M\otimes E$ which is induced by the connections of $T^*\! M$ and $E$, respectively.
\end{prop}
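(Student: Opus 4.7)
The plan is a pointwise calculation. Fix $p\in M$ and choose a local orthonormal frame $\{e_i\}$ that is \emph{normal} at $p$, meaning $(\nabla_{e_i}e_j)(p)=0$ for all $i,j$. In such a frame, Lie brackets $[e_i,e_j]$ vanish at $p$ and expressions involving first-order contractions of Christoffel symbols disappear, which removes most of the bookkeeping. All statements below are to be understood as holding at the point $p$ in this frame; since both sides of the identity are tensorial, checking the equation at an arbitrary point suffices.

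First I would write down the local formulas for $d$ and $d^{*}$ acting on $E$-valued $1$-forms, using the induced connection on $T^{*}\!M\otimes E$. With the normal frame at $p$ these read
\begin{align*}
d\omega(X,Y) &= (\nabla_X\omega)(Y)-(\nabla_Y\omega)(X), \\
d^{*}\omega &= -(\nabla_{e_i}\omega)(e_i).
\end{align*}
Substituting these into $\Delta\omega=(dd^{*}+d^{*}d)\omega$ and evaluating both $(dd^{*}\omega)(X)$ and $(d^{*}d\omega)(X)$ at $p$, the terms of the form $-(\nabla_{e_i}\nabla_{e_i}\omega)(X)$ combine to produce $(\bar{\Delta}\omega)(X)$ by the very definition of the connection Laplacian. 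Every remaining term has the shape $(\nabla_{e_i}\nabla_X\omega)(e_i)$ or $(\nabla_X\nabla_{e_i}\omega)(e_i)$ with opposite signs, so they assemble into commutators $\bigl([\nabla_{e_i},\nabla_X]\omega\bigr)(e_i)$ summed over $i$.

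Next I would invoke the defining identity for the curvature of the induced connection on $T^{*}\!M\otimes E$: at $p$, where brackets vanish, $[\nabla_{e_i},\nabla_X]\omega=R(e_i,X)\omega$. Antisymmetry $R(e_i,X)=-R(X,e_i)$ converts the sum into $(R(X,e_i)\omega)(e_i)=S_\omega(X)$, yielding the stated formula at $p$, and hence globally.

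The main obstacle is simply keeping track of the fact that $\omega$ takes values in $E$: the covariant derivatives $\nabla\omega$ must be those of the full induced connection on $T^{*}\!M\otimes E$, and the curvature $R$ appearing in $S_\omega$ couples the Levi-Civita curvature on $T^{*}\!M$ with the curvature of $\nabla^{E}$ according to the Leibniz rule for tensor products, so that $(R(X,Y)\omega)(Z)=R^{E}(X,Y)\omega(Z)-\omega(R^{TM}(X,Y)Z)$. Distinguishing consistently between $(\nabla_X\omega)(Y)$ and $\nabla^{E}_{X}(\omega(Y))$ is the only real source of confusion; once that is pinned down, the normal-frame computation reduces the proof to routine algebra.
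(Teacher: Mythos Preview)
Your outline is the standard normal-frame computation and is correct in substance. Note, however, that the paper does not actually prove this proposition: immediately after the statement it simply writes ``A proof can be found in (\cite{xin}, p.~21)''. So there is nothing to compare against beyond a citation; your sketch is precisely the argument one finds in that reference.

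One caveat on conventions: you write $\Delta\omega=(dd^{*}+d^{*}d)\omega$, but the paper explicitly defines the Hodge--Laplace operator with the opposite sign, $\Delta=-\{d\delta+\delta d\}$, and likewise takes $\bar{\Delta}\omega=(\nabla_{e_i}\nabla_{e_i}-\nabla_{\nabla_{e_i}e_i})\omega$ (see Appendix~A). With your sign convention the leftover commutator terms come out as $+(R(e_i,X)\omega)(e_i)=-S_\omega(X)$, not $+S_\omega(X)$; the paper's extra global minus sign is what flips this to the stated $\Delta\omega=\bar{\Delta}\omega+S_\omega$. Your sketch is vague enough about the overall sign of the commutator that this could pass unnoticed, so when you write it up in full make sure to track the paper's conventions consistently.
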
 A proof can be found in (\cite{xin}, p. 21).

\begin{prop}[Bochner-type formulas]
Let  $\varphi\in C^0(\Sigma\times [0,T),M)\cap C^\infty(\Sigma\times (0,T),M)$ be a solution to the parabolic \emph{IVP} 
$\mathrm{ (\ref{iniprob})}$, and let $\varphi_t(x)=\varphi(x,t)$. In $\Sigma\times (0,T)$ we have,\\ 
(1)\quad $($Bochner formula for $e(\varphi_t)$$)$
\begin{align}
\dnd[e(\varphi_t)]{t}& =\Delta e(\varphi_t)-|\nabla d\varphi_t|^2+\langle R^M(d\varphi_t(e_i),
d\varphi_t(e_k))d\varphi_t(e_k),d\varphi_t(e_i)\rangle \\
& \mspace{23mu}-\langle d\varphi_t(Ric^\Sigma(e_i)),d\varphi_t(e_i)\rangle -\langle \nabla Z((d\varphi_t)^{\underline{k}}), d\varphi_t\rangle .\notag
\end{align}
(2)\quad $($Bochner formula for $\kappa(\varphi_t)$$)$
\begin{align}
\dnd[\kappa(\varphi_t)]{t}& =\Delta \kappa(\varphi_t)-|\nabla\dnd[\varphi_t]{t}|^2+\langle R^M(\dnd[\varphi_t]{t},
d\varphi_t(e_i))d\varphi_t(e_i),\dnd[\varphi_t]{t}\rangle \\
& \mspace{23mu} -\langle \nabl{t}\, Z((d\varphi_t)^{\underline{k}}), \dnd[\varphi_t]{t}\rangle .\notag
\end{align} Here, $\Delta=-\delta d$ is the Hodge-Laplacian on $C^2(\Sigma)$, 
$\nabla d\varphi_t(X,Y)=(\nabla_X d\varphi_t)(Y)$, for $X,Y\in T_x\Sigma$, is the second fundamental form of $\varphi_t$, and $Ric^\Sigma$ and $R^M$ denote, respectively, the Ricci tensor of $\Sigma$ and the curvature tensor of $M$. The family $\{e_i\}$ represents a positively oriented orthonormal basis for the tangent space at each $x\in\Sigma$. The covariant derivatives and the metrics are the natural induced ones.
\end{prop}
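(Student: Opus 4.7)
The plan is to derive both identities by combining the evolution equation with the Weitzenb\"ock formula above and the pointwise Kato--Bochner identity
\begin{equation*}
\tfrac{1}{2}\Delta |V|^2 = \langle \bar{\Delta} V, V\rangle + |\nabla V|^2,
\end{equation*}
valid for any smooth section $V$ of a Riemannian bundle with metric connection. Throughout I work at a fixed point $x\in\Sigma$ in a positively oriented local orthonormal frame $\{e_i\}$ that is geodesic at $x$, so $\nabla_{e_i}e_i|_x=0$; this both simplifies the commutator computations and allows $\bar{\Delta}$ to be replaced by $\nabla_{e_i}\nabla_{e_i}$ at $x$.

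For (1), I first compute $\dnd[e(\varphi_t)]{t}=\langle \nabl{t} d\varphi_t(e_i),d\varphi_t(e_i)\rangle$ and exchange the order of differentiation via the torsion-free symmetry $\nabl{t}d\varphi_t(e_i)=\nabla_{e_i}\dnd[\varphi_t]{t}$, valid for the induced connection on $\varphi_t^{\ast}TM$. Inserting the evolution equation $\dnd[\varphi_t]{t}=\tau(\varphi_t)-Z((d\varphi_t)^{\underline{k}})$ then gives
\begin{equation*}
\dnd[e(\varphi_t)]{t}=\langle\nabla_{e_i}\tau(\varphi_t),d\varphi_t(e_i)\rangle-\langle\nabla Z((d\varphi_t)^{\underline{k}}),d\varphi_t\rangle.
\end{equation*}
To rewrite the first term I view $d\varphi_t\in\Gamma(T^{\ast}\!\Sigma\otimes\varphi_t^{\ast}TM)$ as a 1-form with values in $E=\varphi_t^{\ast}TM$. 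The Kato--Bochner identity yields $\Delta e(\varphi_t)=\langle\bar{\Delta}d\varphi_t,d\varphi_t\rangle+|\nabla d\varphi_t|^2$, while the Weitzenb\"ock formula produces $\bar{\Delta}d\varphi_t=\Delta d\varphi_t-S_{d\varphi_t}$. The torsion-freeness of $\nabla^M$ gives $d_\nabla d\varphi_t=0$ (symmetry of the second fundamental form), and combined with $\delta_\nabla d\varphi_t=-\tau(\varphi_t)$ one obtains $\Delta d\varphi_t=d_\nabla\tau(\varphi_t)$; contracting frame-wise with $d\varphi_t$ turns $\langle\Delta d\varphi_t,d\varphi_t\rangle$ into exactly $\langle\nabla_{e_i}\tau(\varphi_t),d\varphi_t(e_i)\rangle$. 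A direct index calculation of $S_{d\varphi_t}$ on the tensor bundle $T^{\ast}\!\Sigma\otimes\varphi_t^{\ast}TM$ splits it into a part carrying the base-curvature $R^\Sigma$, which contracts to $-\langle d\varphi_t(Ric^\Sigma(e_i)),d\varphi_t(e_i)\rangle$, and a part carrying the pullback curvature $R^M(d\varphi_t(\cdot),d\varphi_t(\cdot))$, which produces the stated $R^M$-term. Collecting everything yields (1).

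For (2) the strategy is parallel but differentiates in the time direction. Starting with $\dnd[\kappa(\varphi_t)]{t}=\langle\nabl{t}\dnd[\varphi_t]{t},\dnd[\varphi_t]{t}\rangle$ and differentiating the evolution equation in $t$, the key commutator step is
\begin{equation*}
\nabl{t}\tau(\varphi_t)=\nabla_{e_i}\nabla_{e_i}\dnd[\varphi_t]{t}+R^M\!\bigl(\dnd[\varphi_t]{t},d\varphi_t(e_i)\bigr)d\varphi_t(e_i),
\end{equation*}
which follows from the curvature identity $[\nabl{t},\nabla_{e_i}]=R^M(\dnd[\varphi_t]{t},d\varphi_t(e_i))$ on sections of $\varphi_t^{\ast}TM$, together with the same symmetry relation used in (1). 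Applying the Kato--Bochner identity to $V=\dnd[\varphi_t]{t}$ identifies $\langle\nabla_{e_i}\nabla_{e_i}\dnd[\varphi_t]{t},\dnd[\varphi_t]{t}\rangle$ with $\Delta\kappa(\varphi_t)-|\nabla\dnd[\varphi_t]{t}|^2$ at $x$, and substitution yields (2).

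The main obstacle I foresee is the careful bookkeeping inside the Weitzenb\"ock term $S_{d\varphi_t}$: one has to expand the curvature of the induced connection on $T^{\ast}\!\Sigma\otimes\varphi_t^{\ast}TM$ as the sum of the $T^{\ast}\!\Sigma$-curvature and the pullback curvature, separate the two pieces, and track signs and index positions (using in particular the pair-exchange symmetry of $R^M$) so that $Ric^\Sigma$ and $R^M$ emerge with the signs demanded by the statement. Once this local index calculation is executed in the normal frame at $x$, the remaining steps are direct manipulations.
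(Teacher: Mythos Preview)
Your proposal is correct and follows essentially the same route as the paper: work at a point in a geodesic orthonormal frame, use the evolution equation together with the Weitzenb\"ock formula for the $1$-form $d\varphi_t$ (and the Kato--Bochner identity, which is just the direct computation of $\Delta e(\varphi_t)=\partial_{e_i}\partial_{e_i}e(\varphi_t)$ the paper alludes to), and unwind the curvature term $S_{d\varphi_t}$ into its $Ric^\Sigma$ and $R^M$ pieces. The paper's proof is a two-line sketch of exactly this computation, so your expanded outline is in full agreement with it.
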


\begin{proof} Choose a positively oriented orthonormal frame $\{e_i\}$ near  $x\in\Sigma$ with $\nabla_{e_i}e_j\big |_x=0$. Then computing $\Delta e(\varphi_t)=\partial_{e_i}\partial_{e_i} e(\varphi_t)$ and $\Delta \kappa(\varphi_t)$ at the point $x$ and using the Weitzenb\"ock formula yields the desired equalities.
\end{proof}

\begin{rem} Since $\Sigma$ is compact, the unit sphere bundle $S\Sigma$ is also compact. Being a smooth function on $S\Sigma$, $Ric^\Sigma$ achieves its minimum on it.  Consequently there exists a constant $C$ such that $Ric^\Sigma\geq -Cg$. Namely, we can take $C:=-\underset{v\in S\Sigma}{\min}\, Ric^\Sigma(v,v)$.
\end{rem}

Now, set $E=\mathrm{Hom}(\Lambda^{\! k}TM,TM)$.

\begin{cor} \label{enestimates} Let $\varphi:\Sigma\times [0,T)\to M$ be a solution to the \emph{IVP (\ref{iniprob})} and set $\varphi_t(s)=\varphi(s,t)$. Let $Z=Z^\Omega$ be some $k$-force determined by some closed $(k+1)$-form $\Omega\in\Gamma(\Lambda^{\! k+1}T^*\! M)$ as in $\mathrm{(\ref{Krummung})}$, with $|Z|_{L^\infty(M,E)}, |\nabla Z|_{L^\infty(M,E)}<\infty$. The following holds in $\Sigma\times (0,T)$:\\

\emph{(1)}  Let $C$ be a real number such that $Ric^\Sigma\geq -Cg$. If $M$ is of nonpositive curvature $K^M\leq 0$, then
\begin{equation}
\dnd[e(\varphi_t)]{t}\leq\Delta e(\varphi_t)+2C e(\varphi_t)+2^{k-2}k|Z|^2_{L^\infty(M,E)}\, e(\varphi_t)^k.
\end{equation}

\emph{(2)} For the kinetic energy density, we have
\begin{align}
\dnd[\kappa(\varphi_t)]{t} &\leq \Delta \kappa(\varphi_t)+4|R^M\!|e(\varphi_t)\kappa(\varphi_t)+2^{k-2}k^2|Z|^2_{L^\infty(M,E)}\, e(\varphi_t)^{k-1}\kappa(\varphi_t)\\
&\mspace{23mu} +2^{1+k/2}|\nabla Z|_{L^\infty(M,E)}\, e(\varphi_t)^{k/2}\kappa(\varphi_t).\notag
\end{align} The norms are given by $|Z|_{L^\infty(M,E)}=\sup_M\langle Z,Z\rangle^{1/2} $ and $|\nabla Z|_{L^\infty(M,E)}=\sup_M\langle\nabla Z,\nabla Z\rangle^{1/2}$. Regarding the curvature tensor as $(4,0)$-tensor, the norm of $R^M$ is given by $|R^M|=\langle R^M,R^M\rangle^{1/2}$. All covariant derivatives, metrics and norms used here are the natural ones induced by the metrics $g$ and $G$.
\end{cor}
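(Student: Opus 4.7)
The idea is to bound, term by term, the right-hand sides of the two Bochner identities above by $e(\varphi_t)$ and $\kappa(\varphi_t)$, applying Cauchy--Schwarz and Young's inequality so that every derivative cross term is absorbed by the negative contributions $-|\nabla d\varphi_t|^2$ or $-|\nabla \partial_t \varphi_t|^2$ already present.

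For part (1), three of the four terms are handled routinely: under $K^M\leq 0$ the curvature sum $\langle R^M(d\varphi(e_i),d\varphi(e_k))d\varphi(e_k),d\varphi(e_i)\rangle$ is nonpositive, since each summand is a sectional curvature times a squared norm; and diagonalizing $Ric^\Sigma$ in an orthonormal eigenbasis together with $Ric^\Sigma\geq -Cg$ yields $-\langle d\varphi(Ric^\Sigma(e_i)),d\varphi(e_i)\rangle\leq 2C e(\varphi_t)$. The subtle piece is the $\nabla Z$ term, and here the key observation is that, because $\Omega$ is an alternating $(k+1)$-form and $k=\dim\Sigma$, in any oriented orthonormal frame one has
\[
\langle d\varphi, Z((d\varphi)^{\underline{k}})\rangle=\sum_i\Omega(d\varphi(e_i),d\varphi(e_1),\ldots,d\varphi(e_k))\equiv 0,
\]
each summand vanishing because $d\varphi(e_i)$ already appears among the arguments of the alternating form. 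Differentiating in a frame normal at a point and applying the product rule with $\tau(\varphi)=\sum_i\nabla_{e_i}d\varphi(e_i)$ then yields the pointwise identity $-\langle \nabla Z((d\varphi)^{\underline{k}}),d\varphi\rangle=\langle Z((d\varphi)^{\underline{k}}),\tau(\varphi)\rangle$. Combining the Cauchy--Schwarz trace bound $|\tau(\varphi)|^2\leq k|\nabla d\varphi|^2$ with a suitable Young's inequality absorbs the resulting $|\tau|^2/k$ contribution into the $-|\nabla d\varphi|^2$ of the Bochner formula, and the Hadamard estimate $|(d\varphi)^{\underline{k}}|^2\leq |d\varphi|^{2k}=2^k e(\varphi_t)^k$ converts what is left into the claimed $2^{k-2}k|Z|^2 e(\varphi_t)^k$.

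For part (2) the bounds are direct. Cauchy--Schwarz applied to the $(4,0)$-tensor $R^M$ gives $|\langle R^M(\partial_t\varphi,d\varphi(e_i))d\varphi(e_i),\partial_t\varphi\rangle|\leq |R^M|\cdot 2\kappa\cdot 2e=4|R^M|e\kappa$. For the $\nabla_t Z$ term, the chain rule produces $(\nabla^M_{\partial_t\varphi}Z)((d\varphi)^{\underline{k}})+Z(\nabla_t(d\varphi)^{\underline{k}})$; the first piece is handled by a direct Cauchy--Schwarz, yielding $2^{1+k/2}|\nabla Z|e^{k/2}\kappa$, while for the second piece the Leibniz rule on the wedge product gives $|\nabla_t(d\varphi)^{\underline{k}}|\leq k|d\varphi|^{k-1}|\nabla\partial_t\varphi|$, and Young's inequality $ab\leq a^2+b^2/4$ with $a=|\nabla\partial_t\varphi|$ absorbs the cross term into $-|\nabla\partial_t\varphi|^2$, leaving $2^{k-2}k^2|Z|^2 e^{k-1}\kappa$.

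The genuinely subtle step, and the one that controls the whole argument, is the antisymmetry identity $\langle d\varphi,Z((d\varphi)^{\underline{k}})\rangle\equiv 0$ in part (1): without it, the force term in the Bochner formula for $e(\varphi_t)$ would inevitably introduce a $|\nabla Z|$-dependence not present in the stated estimate. Once this identity is in hand, the remainder is routine bookkeeping with Cauchy--Schwarz, Young's inequality, and elementary bounds on wedge products.
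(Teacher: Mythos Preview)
Your proposal is correct and follows essentially the same route as the paper's own proof: the paper also exploits the skew-symmetry identity $\langle Z((d\varphi_t)^{\underline{k}}),d\varphi_t(e_i)\rangle=0$ to rewrite $\langle\nabla Z((d\varphi_t)^{\underline{k}}),d\varphi_t\rangle=-\langle Z((d\varphi_t)^{\underline{k}}),\tau(\varphi_t)\rangle$, then absorbs the $|\tau|\leq k^{1/2}|\nabla d\varphi_t|$ contribution via Young's inequality, and handles part~(2) by the same chain-rule splitting of $\nabla_t Z((d\varphi_t)^{\underline{k}})$ together with the Leibniz estimate and Young's inequality. The constants you obtain match exactly.
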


\begin{proof} Firstly recall the definition  of $(d\varphi)^{\underline{k}}$ and the $\tilde{\wedge}$-product in Appendix \ref{appA}(a). For simplicity we will denote all appearing metrics by $\langle\cdot,\cdot\rangle$. \\

ad (1): Firstly we note that, for an orthonormal frame with  $\nabla_{e_i}e_j\big |_x=0$, at $x$
\begin{align*}\langle\nabla \,Z((d\varphi_t)^{\underline{k}}),d\varphi_t \rangle &= 
\partial_{ e_i}\,\underbrace{\langle Z((d\varphi_t)^{\underline{k}}),d\varphi_t(e_i)\rangle}_{=0}-\langle Z((d\varphi_t)^{\underline{k}}),\nabla_{\! \! e_i}  d\varphi_t(e_i)\rangle\\
&=-\langle Z((d\varphi_t)^{\underline{k}}), \tr \nabla  d\varphi_t\rangle
\end{align*} holds  due to the skew-symmetry of $\Omega$. From this we get 
\begin{align*}
|\langle\nabla \,Z((d\varphi_t)^{\underline{k}}),d\varphi_t \rangle| &\leq k^{1/2} |Z||d\varphi_t|^k|\nabla d\varphi_t|\\
&\leq |\nabla d\varphi_t|^2+\frac{k}{4}|Z|^2_{L^\infty(M,E)}|d\varphi_t|^{2k}.
\end{align*} Using this estimate, the  curvature assumptions $K^M\leq 0$ and $Ric^\Sigma\geq -C g$, and the Bochner formula for the energy density $e(\varphi_t)$,   inequality (1) readily follows.\\

ad (2): From
\begin{align*}
\nabl{t} \,Z((d\varphi_t)^{\underline{k}})&= \Big(\nabla_{\! \dnd[\varphi_t]{t}} Z\Big)((d\varphi_t)^{\underline{k}})+
 Z\Big((\nabla  \dnd[\varphi_t]{t})\tilde{\wedge}(d\varphi_t)^{\underline{k-1}}\Big),
\end{align*} we see 
\begin{align*}
\Big|\Big\langle\nabl{t} \,Z((d\varphi_t)^{\underline{k}}), \dnd[\varphi_t]{t}\Big\rangle\Big| &\leq |\nabla Z||d\varphi_t|^k\Big|\dnd[\varphi_t]{t}\Big|^2+k|Z||d\varphi_t|^{k-1}\Big|\dnd[\varphi_t]{t}\Big|\Big|\nabla \dnd[\varphi_t]{t}\Big|\\
&\leq  \Big|\nabla \dnd[\varphi_t]{t}\Big|^2+ \frac{k^2}{4} |Z|^2_{L^\infty(M,E)}\,|d\varphi_t|^{2k-2}\Big|\dnd[\varphi_t]{t}\Big|^2\\
&\mspace{23mu} +|\nabla Z|_{L^\infty(M,E)}|d\varphi_t|^k\Big|\dnd[\varphi_t]{t}\Big|^2.
\end{align*}  From this estimate and  the Bochner formula for the kinetic energy density  $\kappa(\varphi_t)$ we obtain the desired inequality (2).
\end{proof}

As a special case of Corollary \ref{enestimates}, for $k=1$ we have the following.

\begin{cor}\label{energyestimates} Assume that $\Sigma=S^1$ and $Z$ is a Lorentz force. Let $\varphi=\gamma :S^1\times [0,T)\to M$ be a solution to the \emph{IVP} $\mathrm{(\ref{iniprob})}$, and set $\gamma_t(s)=\gamma(s,t)$. The following hold in $S^1\times (0,T)$:\\

\emph{(1')}  If $|Z|_{L^\infty(M,E)}<\infty$, then
\begin{equation}
\dnd[e(\gamma_t)]{t}\leq\Delta e(\gamma_t)+\lambda\, e(\gamma_t).
\end{equation}

\emph{(2')} If  $|Z|_{L^\infty(M,E)}, |\nabla Z|_{L^\infty(M,E)}<\infty$, then
\begin{equation}
\dnd[\kappa(\gamma_t)]{t}\leq\Delta \kappa(\gamma_t)+4|R^M\!|e(\varphi_t)\kappa(\varphi_t)+\lambda\,\kappa(\gamma_t)+\mu\, e(\gamma_t)^{1/2}\kappa(\gamma_t),
\end{equation} where $\lambda=\lambda(M,Z)=\frac{1}{2}|Z|^2_{L^\infty(M,E)}$ and $\mu=\mu(M, \nabla Z)=2^{3/2}|\nabla Z|_{L^\infty(M,E)}$ are constants only depending on $M, Z$ and $\nabla Z$.  All metrics and norms used here are the natural ones induced by the metrics $g$ and $G$.
\end{cor}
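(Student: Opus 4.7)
The plan is to derive the corollary as a direct specialization of Corollary \ref{enestimates} to the case $k=1$, using two facts peculiar to $\dim(\Sigma)=1$ that let us simplify (and in fact sharpen) the general estimates. First, when $\Sigma=S^1$, the tangent bundle is one-dimensional, so the Ricci curvature vanishes identically; hence in the Bochner formula for $e(\gamma_t)$ the term $-\langle d\varphi_t(\mathrm{Ric}^\Sigma(e_i)),d\varphi_t(e_i)\rangle$ is zero, and we may take the constant $C$ in Corollary \ref{enestimates}(1) to be $C=0$. Second, a positively oriented orthonormal frame on $S^1$ consists of a single vector field $e_1=\partial/\partial s$, so the double sum
\[
\langle R^M(d\varphi_t(e_i),d\varphi_t(e_k))d\varphi_t(e_k),d\varphi_t(e_i)\rangle
\]
appearing in the Bochner formula for $e(\gamma_t)$ contains only the term $i=k=1$, which vanishes by the skew-symmetry of $R^M$ in its first two arguments. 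This makes the curvature hypothesis $K^M\le 0$ superfluous in (1'); one obtains (1') with no assumption on $R^M$ at all.

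For (1'), I would go back to the Bochner formula of Proposition 10(1), retain only the estimate for $\langle\nabla Z((d\varphi_t)^{\underline{1}}),d\varphi_t\rangle$ derived in the proof of Corollary \ref{enestimates}, which reads
\[
|\langle \nabla Z((d\varphi_t)^{\underline 1}), d\varphi_t\rangle|\;\le\; |\nabla d\varphi_t|^2 + \tfrac14 |Z|^2_{L^\infty(M,E)}\,|d\varphi_t|^2,
\]
and absorb $|\nabla d\varphi_t|^2$ against the corresponding negative term in the Bochner formula. Using $|d\varphi_t|^2 = 2 e(\gamma_t)$ the remainder is $\tfrac12 |Z|^2_{L^\infty(M,E)}\, e(\gamma_t) = \lambda\, e(\gamma_t)$, which yields (1').

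For (2') I would proceed identically using Proposition 10(2) and the bound on $\langle \nabl{t} Z((d\varphi_t)^{\underline 1}),\partial\varphi_t/\partial t\rangle$ already established in the proof of Corollary \ref{enestimates}; setting $k=1$ one gets $|d\varphi_t|^{2k-2}=1$ (so that term contributes $\tfrac12|Z|_{L^\infty}^2\,\kappa(\gamma_t)=\lambda\,\kappa(\gamma_t)$) and $|d\varphi_t|^k = (2e(\gamma_t))^{1/2}$ (so that the gradient-of-$Z$ term contributes $2^{3/2}|\nabla Z|_{L^\infty}\,e(\gamma_t)^{1/2}\kappa(\gamma_t)=\mu\, e(\gamma_t)^{1/2}\kappa(\gamma_t)$). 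The sectional curvature term $4|R^M|\,e(\gamma_t)\kappa(\gamma_t)$ is kept as in Corollary \ref{enestimates}(2); note that unlike in (1') it cannot be dropped since it involves mixed indices $d\varphi_t(e_i)$ and $\partial\varphi_t/\partial t$.

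There is no real obstacle: the statement is purely a bookkeeping specialization of the already-proven Corollary \ref{enestimates}. The only point worth flagging in the write-up is the justification for removing the curvature hypothesis $K^M\le 0$ from (1'), since this is a strict improvement over the $k=1$ instance of Corollary \ref{enestimates}(1) as literally stated, and it rests entirely on the dimensional accident that the sectional-curvature sum is empty when $\dim(\Sigma)=1$.
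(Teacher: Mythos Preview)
Your proposal is correct and matches the paper's approach: the paper simply presents Corollary \ref{energyestimates} as the $k=1$ specialization of Corollary \ref{enestimates} without further argument, and your write-up fills in exactly the bookkeeping needed. Your explicit justification for dropping the hypothesis $K^M\le 0$ in (1') --- namely that both the Ricci term and the sectional-curvature sum in the Bochner formula vanish identically when $\dim(\Sigma)=1$ --- is the right point to make, since that improvement over a literal substitution into Corollary \ref{enestimates}(1) is left implicit in the paper.
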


\section{Short time existence}\label{stime}

Now, let us carry out step 1) of our program and show the short time existence of solutions to the IVP (\ref{iniprob}). To this end, we cast the parabolic initial value problem in a form that is analytically easier to handle with.
As before let $(\Sigma^k,g)$ and $(M^n,G)$ be Riemannian manifolds, and let $\Sigma$ be compact and oriented. 
Furthermore, let $Z$ be  a smooth section of $\Hom(\Lambda^{\!  k} TM, TM)\cong\Lambda^{\!  k} T^*\! M\otimes TM$ and $f\in C^\infty(\Sigma,M)$ be the initial condition from (\ref{iniprob}).
We use  Nash's imbedding theorem, which says that any Riemannian manifold can be isometrically imbedded into an Euclidean space of sufficient high dimension, in order to isometrically imbedd $M$ into a certain $\R^q$. Let \[ \iota : M\hookrightarrow\R^q\] denote the isometric imbedding, and let $\tilde{M}$ be a tubular neighborhood of the submanifold $\iota(M)\subset\R^q$. It can be defined as an open subset of $\R^q$ by
\[ \tilde{M}=\{(x,v)\;|\; x\in\iota(M),\; v\in T_x\iota(M)^\perp,\; |v|<\epsilon(x)\}.\] Here, $\epsilon : M\to (0,\infty)$ is a positive smooth function on $M$. By
\[\pi:\tilde{M}\to\iota(M)\]
we denote the canonical projection which assigns to each $z\in\tilde{M}$ the closest point in $\iota(M)$ from $z$. We extend this projection to a smooth map $\pi:\R^q\to\R^q$  that vanishes outside $\tilde{M}$. This can be done by choosing the positive function $\epsilon$ small enough. Also the bundle homomorphism $Z$ can be extended to a bundle homomorphism  $\tilde{Z}:\Lambda^{\!k} T\R^q\to T\R^q$, meaning that $d\iota\circ Z=\tilde{Z}\circ (d\iota)^{\underline{k}}$ holds; and we do this as follows: Denote by $\tilde{M}_1,\tilde{M}_2$ smaller tubular neighborhoods of $M$ such that $M\subset\tilde{M}_1\subset\tilde{M}_2\subset\tilde{M}$ holds. For example, as  $\tilde{M}_1$ and $\tilde{M}_2$ we can take the $\epsilon/4$-tubular neighborhood and the $\epsilon/2$-tubular neighborhood, respectively, both contained in the above defined $\epsilon$-tubular neighborhood $\tilde{M}$. In $\tilde{M}_2$ we define $\tilde{Z}$ by
\[\tilde{Z}_x(\xi):=d\iota\Big(Z_{\pi(x)}((d\pi)^{\underline{k}}(\xi))\Big),\] for all $\xi\in\Lambda^{\!k} T_x\R^q$ and all $x\in \tilde{M}_2$. Here, we have identified all  tangent spaces $T_x\R^q\cong T_y\R^q\cong\R^q$ by parallel translation.  Then  choose a smooth function $\psi:\R^q\to\R$ with support in $\tilde{M}_2$ such that $\psi\equiv 1$ in the closure of $\tilde{M}_1$ and $0\leq \psi\leq 1$ in $\R^q$ hold. Multiplying the above $\tilde{Z}$ defined in $\tilde{M}_2$ by this cut-off function $\psi$, yields a smooth bundle map $\tilde{Z}:\Lambda^{\!k}T\R^q\to T\R^q$ which is globally defined in $\R^q$ and vanishes outside $\tilde{M}_2$.\\

Now,
let $u:\Sigma\times[0,T)\to\tilde{M}$ be a map from $\Sigma\times[0,T)$ into $\tilde{M}\subset\R^q$.
Regarding $u$ as a function with values in $\R^q$, we may consider the following initial value problem (IVP) for the system of parabolic partial differential equations:\rbox{iniprobl}
\begin{equation}\label{iniprobl}
\left\{\begin{array}{lc }
(\Delta-\dnd{t})\,u(x,t)=\Pi_{u}(du,du)(x,t)+\tilde{Z}_u((du)^{\underline{k}})(x,t),\quad\quad
(x,t)\in\Sigma\times (0,T),\\
u(x,0)=\iota\circ f(x).     
\end{array}\right.
\end{equation} Here, $\Delta=-\delta d$ is the Hodge Laplacian of $\Sigma$ componentwise applied to $u$ and $f$ is the map given as initial condition of the IVP (\ref{iniprob}). $\tilde{Z}$ is the extension
of the $k$-force as described above and $\Pi(du,du)$ is a vector in $\R^q$ defined as follows. Let $\{e_i\}$ be a local orthonormal frame field on $\Sigma$ regarded, by canonically extension, as a local frame field on $\Sigma\times(0,T)$. Then
\begin{equation}
\Pi(du,du):=\tr\nabla d\pi(du,du)=(\nabla_{du(e_i)} d\pi)(du(e_i)).
\end{equation} 
 We consider only those solutions $u :\Sigma\times [0,T)\to\tilde{M}$ to the IVP (\ref{iniprobl}) which are continuous on $\Sigma\times[0,T)$, $C^2$ differentiable in $\Sigma$ and of class $C^1$ in $(0,T)$. In symbols this means
\[u\in C^0(\Sigma\times [0,T), \tilde{M})\cap C^{2,1}(\Sigma\times (0,T), \tilde{M}).\] The relation between the two initial value problems is ruled by the following. \rbox{iniequi}

\begin{prop}\label{iniequi}
Let $u\in C^0(\Sigma\times [0,T), \tilde{M})\cap C^{2,1}(\Sigma\times (0,T), \tilde{M})$. If $u$ is a solution to the initial value problem $\mathrm{(\ref{iniprobl})}$, then $u(\Sigma\times [0,T))\subset\iota(M)$ holds true and $\varphi=\iota^{-1}\circ u$ is a solution to the  \emph{IVP (\ref{iniprob})}. The converse also holds true.
\end{prop}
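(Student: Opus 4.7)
\textbf{Plan for the proof of Proposition~\ref{iniequi}.}

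The converse direction is the straightforward half. If $\varphi$ satisfies $(\ref{iniprob})$, I set $u:=\iota\circ\varphi$ and invoke the Gauss formula for the isometric embedding $\iota$: differentiating the identity $\pi\circ\iota=\iota$ twice and using that $d\pi|_{\iota(M)}$ is orthogonal projection onto $T\iota(M)$ yields
\[
\Delta u \;=\; d\iota(\tau(\varphi)) \;+\; \Pi_u(du,du).
\]
By construction of $\tilde Z$ (the cutoff $\psi$ equals $1$ in a neighborhood of $\iota(M)$, and $d\pi|_{\iota(M)}$ is the identity on $T\iota(M)$), one also has $\tilde Z_u((du)^{\underline k})=d\iota(Z((d\varphi)^{\underline k}))$, and clearly $\partial_t u=d\iota(\partial_t\varphi)$. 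Substituting and applying $(d\iota)^{-1}$ to the tangential part converts $(\ref{iniprobl})$ into $(\ref{iniprob})$.

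The forward direction is the substantive content. Given a solution $u$ of $(\ref{iniprobl})$ with $u(\cdot,0)=\iota\circ f\in\iota(M)$, I must first show that $u$ stays in $\iota(M)$; once this is done, $\varphi:=\iota^{-1}\circ u$ is well-defined, and running the Gauss computation backwards shows that it satisfies $(\ref{iniprob})$. To prove $u(\Sigma\times[0,T))\subset\iota(M)$ I introduce the $\R^q$-valued normal deviation
\[
N(x,t) \;:=\; u(x,t)-\pi(u(x,t)),
\]
which is as regular as $u$, and satisfies $N(\cdot,0)\equiv 0$ because $\pi$ fixes $\iota(M)$ pointwise.

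Using the chain rule $\partial_t(\pi\circ u)=d\pi_u(\partial_t u)$ and $\Delta(\pi\circ u)=d\pi_u(\Delta u)+\Pi_u(du,du)$, then substituting the PDE $(\ref{iniprobl})$, I obtain after cancellation
\[
(\partial_t-\Delta)N \;=\; d\pi_u\bigl(\Pi_u(du,du)\bigr) \;-\; (I-d\pi_u)\bigl(\tilde Z_u((du)^{\underline k})\bigr).
\]
Both terms on the right vanish identically on $\iota(M)$: at a point of $\iota(M)$, $\Pi_u(du,du)$ equals (up to sign) the vectorial second fundamental form of $\iota(M)\subset\R^q$, hence is normal and annihilated by $d\pi_u$; and inside $\tilde M_1$ the construction $\tilde Z_u=d\iota\circ Z_{\pi(u)}\circ(d\pi_u)^{\underline k}$ forces $\tilde Z_u((du)^{\underline k})\in d\iota(TM)$, so it is fixed by $d\pi_u$ on $\iota(M)$. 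A first-order Taylor expansion in the transverse direction therefore bounds each term pointwise by $C(u,du)\,|N|$ with $C$ continuous.

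Combining this with the elementary identity $(\partial_t-\Delta)|N|^2=2\langle N,(\partial_t-\Delta)N\rangle-2|\nabla N|^2$ produces the linear parabolic differential inequality
\[
(\partial_t-\Delta)|N|^2 \;\leq\; K\,|N|^2
\]
on every slab $\Sigma\times[\varepsilon,T_0]\subset\Sigma\times(0,T)$, where $K$ depends only on $\sup|du|$ over the slab (finite because $u\in C^{2,1}$ and $\Sigma$ is compact). Since $|N|^2(\cdot,0)=0$ by continuity, the parabolic maximum principle on the closed manifold $\Sigma$, applied to $e^{-Kt}|N|^2$ and letting $\varepsilon\downarrow 0$, forces $|N|^2\equiv 0$ on $\Sigma\times[0,T_0]$ for every $T_0<T$; hence $u$ stays in $\iota(M)$. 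The main obstacle is the transversal Taylor estimate, whose validity rests entirely on the two algebraic vanishings on $\iota(M)$ noted above; these in turn are consequences of the idempotence $\pi\circ\pi=\pi$ near $\iota(M)$ and the explicit definition of $\tilde Z$ inside $\tilde M_1$. Once that estimate is in place, the rest is a routine application of the maximum principle.
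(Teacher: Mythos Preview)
Your overall strategy---introduce $N=u-\pi(u)$, derive a parabolic inequality for $|N|^2$, and apply the maximum principle---is exactly the paper's approach (the paper writes $\rho(u)$ for your $N$ and $h=|\rho(u)|^2$ for your $|N|^2$). The converse direction and the ``Gauss formula'' half are handled the same way.

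There is, however, a genuine gap in your Taylor step for the first term $d\pi_u(\Pi_u(du,du))$. Your justification (``at a point of $\iota(M)$, $\Pi_u(du,du)$ is the second fundamental form, hence normal'') tacitly assumes $du$ is tangent to $\iota(M)$, which is precisely what you are trying to prove. As a function $H(y,\xi)=d\pi_y\big((\nabla d\pi)_y(\xi,\xi)\big)$, this expression does \emph{not} vanish for $y\in\iota(M)$ and arbitrary $\xi$: differentiating $\pi\circ\pi=\pi$ twice gives, at $y\in\iota(M)$,
\[
d\pi_y\big((\nabla d\pi)_y(\xi,\xi)\big)=(\nabla d\pi)_y(\xi,\xi)-(\nabla d\pi)_y(\xi^T,\xi^T),
\]
which involves the normal part $\xi^\perp$ and is nonzero in general. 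So the bound $|d\pi_u(\Pi_u(du,du))|\le C|N|$ is false as stated.

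The paper sidesteps this by never estimating $(\partial_t-\Delta)N$ pointwise; it uses instead the exact orthogonality $N\perp\operatorname{im}(d\pi_u)=T_{\pi(u)}\iota(M)$ and, by construction of $\tilde Z$, $N\perp\tilde Z_u((du)^{\underline k})$. These kill both troublesome terms \emph{after} pairing with $N$, yielding the clean identity
\[
(\partial_t-\Delta)|N|^2=2\langle N,(\partial_t-\Delta)N\rangle-2|\nabla N|^2=-2|\nabla N|^2\le 0,
\]
with no $K|N|^2$ term at all. (The paper then integrates over $\Sigma$ rather than invoking the maximum principle, but either concludes $|N|^2\equiv 0$.) Your argument is easily repaired by making the same observation: $\langle N,d\pi_u(X)\rangle=0$ for every $X$, so the first term contributes nothing to $(\partial_t-\Delta)|N|^2$, and likewise $\langle N,\tilde Z_u\rangle=0$ disposes of the second. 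Alternatively one can bound the first term by $C(|N|+|\nabla N|)$ (since $(I-d\pi_u)du=dN$) and absorb the gradient via Young's inequality, but the orthogonality route is cleaner and is what the paper does.
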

\begin{proof}
Suppose that  $u\in C^0(\Sigma\times [0,T), \tilde{M})\cap C^{2,1}(\Sigma\times (0,T), \tilde{M})$ is a solution to the IVP (\ref{iniprobl}) and let $\tilde{Z}$ be the extension of $Z\in\Gamma(\mathrm{Hom}(\Lambda^{\!k} TM,TM))$ constructed above. At first we will show that $u(\Sigma\times [0,T))\subset\iota(M)$ holds. For this we define a map $\rho:\tilde{M}\to\R^q$ by
\[\rho(z)=z-\pi(z),\quad z\in \tilde{M}, \] and a function $h:\Sigma\times [0,T)\to\R^q$ by
\[h(x,t)=|\rho(u(x,t))|^2,\quad (x,t)\in\Sigma\times [0,T).\] We see, by definition, that
$\rho(z)=0$ iff $z\in\iota(M)$. Thus, we only have to verify $h\equiv 0$. Since $u(x,0)=\iota(f(x))\in\iota(M)$, we see $h(x,0)=0$. As $u$ is a solution to the IVP (\ref{iniprobl}), we obtain with $\rho(u)=\rho\circ u$ 
\begin{align*}
\dnd[h]{t} &= \dnd{t} \langle\rho(u),\rho(u)\rangle=
2\big\langle d\rho\big(\dnd[u]{t}\big),\rho(u)\big\rangle\\
&=2\big\langle d\rho(\Delta u-\Pi(du,du)-Z((du)^{\underline{k}})),\rho(u)\big\rangle,
\end{align*}
\begin{align*}
\Delta h &=\Delta\langle\rho(u),\rho(u)\rangle\\
&=2\langle\Delta\rho(u),\rho(u)\rangle+2|d\rho(u)|^2,
\end{align*} where $\langle\;,\,\rangle$ is the scalar product in $\R^q$. The formula for the second fundamental form of composite maps (see Lemma \ref{zweite} below) says
\[\Delta\rho(u)=d\rho(\Delta u)+\tr\nabla d\rho(du,du), \] where $\Delta$ is the Hodge-Laplacian of $\Sigma$. Since, by definition, $\pi(z)+\rho(z)=z$, we have $d\pi+d\rho=id$ and $\nabla d\pi+\nabla d\rho=0$. This together with the fact that the images of $d\pi$ and $\rho$ are orthogonal to each other yields
\begin{align*}
\Delta h &= 2\langle d\rho(\Delta u)-\tr\nabla d\pi(du,du),\rho(u)\rangle+2|d\rho(u)|^2\\
&= 2\langle d\rho(\Delta u-\Pi(du,du)),\rho(u)\rangle+2|d\rho(u)|^2,
\end{align*} and hence, 
\begin{align}\label{crucialesti}\dnd[h]{t} &=\Delta h - 2|d\rho(u)|^2-2\langle d\rho(\tilde{Z}((du)^{\underline{k}})),\rho(u)\rangle\notag\\
&=\Delta h - 2|d\rho(u)|^2-2\langle \tilde{Z}((du)^{\underline{k}}),\rho(u)\rangle\\
&=\Delta h - 2|d\rho(u)|^2.\notag
\end{align} The term $\langle \tilde{Z}((du)^{\underline{k}}),\rho(u)\rangle$ vanishes since $\tilde{Z}((du)^{\underline{k}})\perp\rho(u)$ by construction of  $\tilde{Z}$.
 Then by the Divergence Theorem  we have for each $t\in (0,T)$,
\begin{align*}
\ddt\int_\Sigma h(\cdot,t)\,d\vol_g &=\int_\Sigma \dnd[h]{t}(\cdot,t)\,d\vol_g= -2\int_\Sigma |d\rho(u)|^2\,d\vol_g\leq 0. \end{align*}          Since $h(x,0)=0$ from the assumption, we have
\[\int_\Sigma h(\cdot,t)\,d\vol_g\leq\int_\Sigma h(\cdot,0)\,d\vol_g=0\] and consequently
 $h\equiv 0$. \\
 
 Now, we turn to the second half of the assertion. Therefore, let $u : \Sigma\times [0,T)\to \tilde{M}$ be a solution to the IVP (\ref{iniprobl}). From the previous assertion we know that $u(\Sigma\times [0,T))\subset\iota(M)$. Hence, we can write $u=\iota\circ \varphi$, where $\varphi$ is a map from $\Sigma\times [0,T)$ to $M$. We will show that $\varphi$ is a solution to the IVP (\ref{iniprob}). Due to the formula (see Lemma \ref{zweite}) for the second fundamental form of composition maps for $u=\iota\circ \varphi$ and for $\iota=\pi\circ\iota$ we get

\begin{align*}
\Delta u&=\tr\nabla d\iota (d\varphi,d\varphi)+d\iota(\tau(\varphi)),\\
\nabla d\iota &=\nabla d\pi(d\iota,d\iota)+d\pi(\nabla d\iota).
\end{align*} Since $\iota : M\to \R^q$ is an isometric imbedding, the second fundamental $\nabla d\iota$ of $\iota$ is orthogonal to $\iota(M)$ at each point, and thus $d\pi(\nabla d\iota)=0$. Combining this and the preceding equations, we obtain
\[d\iota(\tau(\varphi))=\Delta u-\tr\nabla d\pi(du,du).\]  Bearing in mind that $d\iota\circ Z=\tilde{Z}\circ (d\iota)^{\underline{k}}$ and $d\iota(\dnd[\varphi]{t})=\dnd[u]{t}$ hold, we  finally arrive at
\[d\iota\Big(\tau(\varphi)-\dnd[\varphi]{t}-Z((d\varphi)^{\underline{k}})\Big)=(\Delta-\dnd{t})\,u-\tilde{Z}((du)^{\underline{k}})-\Pi(du,du).\] From this one  reads off that $\varphi$ is a solution to the IVP (\ref{iniprob}) if $u$ is a solution to the initial value problem (\ref{iniprobl}). Analogously the converse can  easily be verified.
\end{proof}

In the proof of the preceding proposition we have made use of the following lemma which can be verified by a simple calculation.

\begin{lemma}\label{zweite}
Let $(\Sigma,g)$, $(M,G)$ and $(N,h)$ be Riemannian manifolds. Given maps $\Sigma\overset{\varphi}{\longrightarrow} M \overset{\psi}{\longrightarrow} N$, we have $\nabla d(\psi\circ \varphi)=d\psi(\nabla d\varphi)+\nabla d\psi(d\varphi,d\varphi)$; and $\tau(\psi\circ \varphi)=d\psi(\tau(\varphi))+\tr\nabla d\psi (d\varphi,d\varphi)$.
\end{lemma}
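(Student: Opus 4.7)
The plan is to verify the two identities by a direct local calculation, since both sides of each equation are tensorial in their vector-field arguments; once the first identity (for $\nabla d(\psi\circ\varphi)$) is established, the second (for $\tau(\psi\circ\varphi)$) follows by taking the trace with respect to $g$. So the whole content is to prove the Hessian formula.

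First I would fix an arbitrary point $x_0\in\Sigma$ and choose a local orthonormal frame $\{e_i\}$ on $\Sigma$ near $x_0$ that is \emph{synchronous} at $x_0$, i.e.\ $\nabla^\Sigma_{e_i}e_j\big|_{x_0}=0$. Similarly I would pick local orthonormal frames $\{\varepsilon_\alpha\}$ on $M$ near $\varphi(x_0)$ and $\{\epsilon_a\}$ on $N$ near $\psi(\varphi(x_0))$, synchronous at the respective points. In these frames I write $d\varphi = \varphi^\alpha_i\,\varepsilon_\alpha\otimes e^i$ and $d\psi = \psi^a_\alpha\,\epsilon_a\otimes\varepsilon^\alpha$, where the components are smooth functions.

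Next I would apply the usual chain rule $d(\psi\circ\varphi)(X) = d\psi\bigl(d\varphi(X)\bigr)$, so that in components $(\psi\circ\varphi)^a_i = (\psi^a_\alpha\circ\varphi)\,\varphi^\alpha_i$. Differentiating in the direction $e_j$ and using that both frames are synchronous at the base point, the Hessian $(\nabla d(\psi\circ\varphi))(e_i,e_j)$ at $x_0$ reduces to a pure partial derivative of $(\psi\circ\varphi)^a_i$ with respect to $e_j$. Expanding the product by Leibniz gives two terms: one of the form $\psi^a_\alpha\cdot\partial_{e_j}\varphi^\alpha_i$, which at $x_0$ is exactly $d\psi\bigl((\nabla d\varphi)(e_i,e_j)\bigr)$; and one of the form $(\partial_{\varepsilon_\beta}\psi^a_\alpha)\,\varphi^\beta_j\,\varphi^\alpha_i$, which at $x_0$ is exactly $(\nabla d\psi)\bigl(d\varphi(e_i),d\varphi(e_j)\bigr)$. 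This proves the Hessian identity at $x_0$, and since $x_0$ was arbitrary and both sides are tensorial, the identity holds globally.

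Finally I would trace with respect to $g$: $\tau(\psi\circ\varphi) = \tr_g\nabla d(\psi\circ\varphi) = d\psi(\tr_g\nabla d\varphi) + \tr_g\nabla d\psi(d\varphi,d\varphi) = d\psi(\tau(\varphi))+\tr\nabla d\psi(d\varphi,d\varphi)$, using linearity of $d\psi$ in the first term. The only real subtlety is the bookkeeping of which connection acts on which bundle: the induced connection on $(\psi\circ\varphi)^*TN = \varphi^*(\psi^*TN)$ is the pullback under $\varphi$ of the induced connection on $\psi^*TN$, and the synchronous frame choice makes all the Christoffel contributions vanish at $x_0$, so this bookkeeping reduces to inspection. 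I do not expect any genuine obstacle—this is a standard computation—but the place requiring care is to verify that the second term really produces $\nabla d\psi$ evaluated at $(d\varphi(e_i),d\varphi(e_j))$ and not some other bilinear combination, which is precisely why the synchronous frame on $M$ is needed.
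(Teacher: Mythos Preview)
Your proposal is correct and is precisely the ``simple calculation'' the paper alludes to; the paper itself omits the proof entirely, merely stating that the lemma ``can be verified by a simple calculation.'' Your use of synchronous frames at the base point to reduce both Hessians to ordinary partial derivatives, followed by Leibniz and a trace, is exactly the standard argument and there is nothing to add.
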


From Proposition \ref{iniequi} we see that we can prove short time existence for solutions to the IVP (\ref{iniprob}) by establishing short time existence for IVP (\ref{iniprobl}). For the latter IVP one can set up a function space which is well adapted to our problem. To this end,  we follow Lady$\check{\mathrm{z}}$enskaya, Solonnikov and Ural'ceva (\cite{lsu}, p. 7). Given $T>0$, set $Q=\Sigma\times [0,T]$. Let $0<\alpha<1$. Given a vector valued function $u:Q\to\R^q$, set
\[|u|_Q=\underset{(x,t)\in Q}{\mathrm{sup}} |u(x,t)|,\]
\[\langle u\rangle_x^{(\alpha)}=\underset{x\neq x'}{\underset{(x,t),(x',t)\in Q}{\mathrm{sup}}}
\frac{|u(x,t)-u(x',t)|}{d(x,x')^\alpha},\]
\[\langle u\rangle_t^{(\alpha)}=\underset{t\neq t'}{\underset{(x,t),(x,t')\in Q}{\mathrm{sup}}}
\frac{|u(x,t)-u(x,t')|}{|t-t'|^\alpha},\] and define the norms $|u|_Q^{(\alpha,\alpha/2)}$, $|u|_Q^{(2+\alpha,1+\alpha/2)}$ by
\begin{align}\label{norms}|u|_Q^{(\alpha,\alpha/2)}&=|u|_Q+\langle u\rangle_x^{(\alpha)}+\langle u\rangle_t^{(\alpha/2)},\notag\\
|u|_Q^{(2+\alpha,1+\alpha/2)} &= |u|_Q+|\partial_t u|_Q+|D_x u|_Q+|D^2_x u|_Q \\
 & \mspace{23mu}+\langle \partial_t u\rangle_t^{(\alpha/2)}+\langle D_x u\rangle_t^{(1/2+\alpha/2)}+\langle D^2_x u\rangle_t^{(\alpha/2)}\notag\\
 & \mspace{23mu}+\langle \partial_t u\rangle_x^{(\alpha)}+\langle D_x^2 u\rangle_x^{(\alpha)}.\notag
\end{align} Here, $d(x,x')$ is the Riemannian distance between $x$ and $x'$ in $\Sigma$ and $\partial_t u$ represents $\partial u/\partial t$. Also $D_x u$ and $D^2_x u$ represent the first order derivative of $u$ in $\Sigma$ and its covariant derivative, respectively. In terms of a local coordinate system $(x^i)$ in $\Sigma$ and the standard coordinates $(y^\alpha)$ of $\R^q$, $D_x u$ and $D^2_x u$ are, respectively, given by
\[D_x u= du=\partial_i u^\alpha\cdot dx^i\otimes\dnd{y^\alpha},\]
\[D^2_x u= \nabla du=\nabla_i\partial_j u^\alpha\cdot dx^i\otimes dx^j\otimes\dnd{y^\alpha},\]  and $| D_x u|^2_Q$ and $|D^2_x u|^2_Q$ are, respectively, given as

\[| D_x u|^2_Q=\underset{(x,t)\in Q}{\mathrm{sup}}g^{ij}\partial_i u^\alpha\partial_j u^\alpha,\]
\[| D^2_x u|^2_Q=\underset{(x,t)\in Q}{\mathrm{sup}}g^{ik}g^{jl}\nabla_i\partial_j u^\alpha \nabla_k\partial_l u^\alpha,\] where $\partial_i=\partial/\partial x^i$. With respect to these norms we define the function spaces $C^{\alpha,\alpha/2}(Q,\R^q)$ and $C^{2+\alpha,1+\alpha/2}(Q,\R^q)$, respectively, by
\[C^{\alpha,\alpha/2}(Q,\R^q)=\{ u\in C^0(\Sigma\times [0,T])\;|\; |u|_Q^{(\alpha,\alpha/2)}<\infty\},\]
\[C^{2+\alpha,1+\alpha/2}(Q,\R^q)=\{ u\in C^{2,1}(\Sigma\times [0,T])\;|\; |u|_Q^{(2+\alpha,1+\alpha/2)}<\infty\},\] and set
\[C^{2+\alpha,1+\alpha/2}(Q,M)=\{ u\in C^{2+\alpha,1+\alpha/2}(Q,\R^q)\;|\; u(Q)\subset M\},\] where we have naturally identified $M$ with $\iota(M)\subset \R^q$.  One can show that $C^{\alpha,\alpha/2}(Q,\R^q)$ and  $C^{2+\alpha,1+\alpha/2}(Q,\R^q)$ are Banach spaces with norms
$|u|_Q^{(\alpha,\alpha/2)}$, $|u|_Q^{(2+\alpha,1+\alpha/2)}$, respectively. They are called \textit{H\"older spaces} on $Q\times [0,T]$. See \cite{evans}, \cite{gilbarg} for example. $C^{2+\alpha,1+\alpha/2}(Q,M)$ is a closed subset of $C^{2+\alpha,1+\alpha/2}(Q,\R^q)$. This follows immediately because $M$, as a compact subset,  is closed in $\R^q$.\\

Now, we prove the following.  \rbox{shortime}

\begin{theo}\label{shortime}
Let $(\Sigma,g)$ and $(M,G)$ be  Riemannian manifolds, and $\Sigma$ be compact and oriented. Furthermore, let $Z\in\Gamma(\Hom(\Lambda^{\! k}TM,TM))$. For any $C^{2+\alpha}$ map $f\in C^{2+\alpha}(\Sigma,M)$ there exists a positive number $\epsilon=\epsilon(\Sigma,M,Z,f,\alpha)>0$ and a map $u\in C^{2+\alpha,1+\alpha/2}(\Sigma\times [0,\epsilon], \tilde{M})$ such that $u$ is a solution in $\Sigma\times [0,\epsilon)$ to the  \textup{IVP (\ref{iniprobl})}. Here, $\epsilon=\epsilon(\Sigma,M,Z,f,\alpha)$ is a constant depending on $\Sigma, M,Z, f,$ and $\alpha$.
\end{theo}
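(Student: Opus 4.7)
Following the hint at the beginning of Section~\ref{stime}, the strategy is to recast the IVP (\ref{iniprobl}) as a single functional equation in H\"older spaces and to solve it by the Inverse Function Theorem in Banach spaces. For $T>0$ set $Q_T:=\Sigma\times[0,T]$ and let $X_T = C^{2+\alpha,1+\alpha/2}(Q_T,\R^q)$, $Y_T = C^{\alpha,\alpha/2}(Q_T,\R^q)$ with the norms (\ref{norms}), and $Z_0=C^{2+\alpha}(\Sigma,\R^q)$. Define the nonlinear operator
\begin{equation*}
\mathcal{F}_T : X_T \longrightarrow Y_T \times Z_0,\qquad \mathcal{F}_T(u) = \Bigl((\Delta - \tfrac{\partial}{\partial t})u - \Pi_u(du,du) - \tilde{Z}_u\bigl((du)^{\underline{k}}\bigr),\; u(\cdot,0)\Bigr).
\end{equation*}
A map $u\in X_T$ with image in $\tilde{M}$ solves (\ref{iniprobl}) on $Q_T$ exactly when $\mathcal{F}_T(u)=(0,\iota\circ f)$. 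The operator $\mathcal{F}_T$ is of class $C^1$ by a routine calculation, using that $C^{\alpha,\alpha/2}(Q_T)$ and $C^{2+\alpha,1+\alpha/2}(Q_T)$ are Banach algebras and that $\pi$, $\nabla d\pi$, $\tilde Z$ are smooth on $\R^q$ with bounded derivatives on any compact set meeting the range of $u$.

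The analytic heart of the argument is to choose a reference map $u_\ast\in X_T$ and show that the Fr\'echet differential $D\mathcal{F}_T(u_\ast):X_T\to Y_T\times Z_0$ is an isomorphism of Banach spaces. I would pick
\begin{equation*}
u_\ast(x,t) = \iota(f(x)) + t\,h(x),\qquad h(x):=d\iota\bigl(\tau(f)(x)-Z((df)^{\underline{k}})(x)\bigr),
\end{equation*}
which is tailored so that the evolution equation holds pointwise at $t=0$: a direct computation using $\pi\circ\iota=\id_M$, which forces $\Pi_{\iota\circ f}(d(\iota\circ f),d(\iota\circ f))=0$ by Lemma~\ref{zweite}, yields $\mathcal{F}_T(u_\ast)(\cdot,0)=(0,\iota\circ f)$. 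The principal part of $D\mathcal{F}_T(u_\ast)$ is the linear operator $v\mapsto((\Delta-\partial_t)v,\,v(\cdot,0))$, which by classical parabolic Schauder theory on closed manifolds (\cite{lsu}, Chapter~IV) is an isomorphism $X_T\to Y_T\times Z_0$. The remaining contributions, obtained by differentiating $\Pi_u(du,du)$ and $\tilde{Z}_u((du)^{\underline{k}})$ in $u$, involve only first-order spatial derivatives of the perturbation and H\"older coefficients built from $u_\ast$, hence constitute a compact perturbation of the principal part; the Fredholm alternative, combined with a uniqueness statement for the linearized equation obtained from an energy identity of the type displayed in (\ref{crucialesti}), then transfers the isomorphism property to $D\mathcal{F}_T(u_\ast)$ itself.

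Granted this isomorphism, the Inverse Function Theorem produces an open neighborhood $\mathcal{U}$ of $\mathcal{F}_T(u_\ast)$ in $Y_T\times Z_0$ contained in the range of $\mathcal{F}_T$. To conclude I would shrink $T=\epsilon$ so that the target $(0,\iota\circ f)$ lies in $\mathcal{U}$. The first component $G_\epsilon$ of $\mathcal{F}_\epsilon(u_\ast)$ vanishes at $t=0$ and is smooth in $(x,t)$, so a direct estimate of the H\"older norms (\ref{norms}) gives $|G_\epsilon|_{Y_\epsilon}\lesssim \epsilon^{1-\alpha/2}\to 0$ as $\epsilon\downarrow 0$. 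Hence for $\epsilon$ sufficiently small $(0,\iota\circ f)\in\mathcal{U}$, and the IFT furnishes $u\in X_\epsilon$ with $\mathcal{F}_\epsilon(u)=(0,\iota\circ f)$. Since $u(\cdot,0)=\iota\circ f\in\iota(M)\subset\tilde{M}$, a further reduction of $\epsilon$ via uniform continuity of $u$ forces $u(Q_\epsilon)\subset\tilde{M}$, after which Proposition~\ref{iniequi} identifies $u$ as the sought-after solution.

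The main obstacle will be the isomorphism statement for $D\mathcal{F}_T(u_\ast)$. While the pure heat-operator half is classical Schauder theory, one must verify that the first- and zeroth-order corrections produced by the quasilinear terms $\Pi_u(du,du)$ and $\tilde{Z}_u((du)^{\underline{k}})$ fit into the Schauder framework with coefficients of the correct H\"older class, and that surjectivity survives the perturbation, the latter being exactly the role of the energy identity.
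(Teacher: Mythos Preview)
Your overall scheme (nonlinear operator, reference map, Inverse Function Theorem, shrink time) is the same as the paper's, but your choice of reference map creates a genuine regularity gap that breaks the argument. With $f\in C^{2+\alpha}(\Sigma,M)$ only, the vector $h=d\iota\bigl(\tau(f)-Z((df)^{\underline{k}})\bigr)$ lies merely in $C^{\alpha}(\Sigma,\R^q)$, since $\tau(f)$ already costs two spatial derivatives. Hence $u_\ast(x,t)=\iota(f(x))+t\,h(x)$ is \emph{not} in $X_T=C^{2+\alpha,1+\alpha/2}(Q_T,\R^q)$: the terms $D_x u_\ast$, $D_x^2 u_\ast$ involve $D_x h$, $D_x^2 h$, which need not exist. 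Consequently $\mathcal{F}_T(u_\ast)$ is not even defined in $Y_T\times Z_0$, the linearization $D\mathcal{F}_T(u_\ast)$ has coefficients of the wrong H\"older class, and the smallness estimate $|G_\epsilon|_{Y_\epsilon}\lesssim\epsilon^{1-\alpha/2}$ cannot be justified (it would require $\partial_t G$ to be $C^\alpha$ in $x$, i.e.\ essentially $u_\ast\in C^{2+\alpha,1+\alpha/2}$ again). The paper circumvents exactly this obstruction by \emph{solving a linear parabolic IVP} (with right-hand side $\Pi_f(df,df)+\tilde Z_f((df)^{\underline k})$ frozen at $t=0$) to produce the approximate solution $v\in C^{2+\alpha,1+\alpha/2}(\Sigma\times[0,1],\R^q)$; this parabolic smoothing is what manufactures the missing regularity from data that are only $C^{2+\alpha}$.

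Two smaller points. First, your claim that $\pi\circ\iota=\id_M$ forces $\Pi_{\iota\circ f}(d(\iota\circ f),d(\iota\circ f))=0$ is false: as maps into $\R^q$ one has $\pi\circ\iota=\iota$, and Lemma~\ref{zweite} gives $\nabla d\pi(d\iota,d\iota)=\nabla d\iota\neq0$ in general. The identity you actually need, $\Delta(\iota\circ f)-\Pi_{\iota\circ f}(d(\iota\circ f),d(\iota\circ f))=d\iota(\tau(f))$, does yield $G(\cdot,0)=0$, but not by the reasoning you gave. Second, the paper's route to the isomorphism of the linearization is more direct than your Fredholm-plus-energy argument: once the reference $v$ is in $C^{2+\alpha,1+\alpha/2}$, the full linearized operator (including the first- and zeroth-order terms from $\Pi$ and $\tilde Z$) fits squarely into Theorem~\ref{linear}, which furnishes existence, uniqueness and the Schauder bound in one stroke. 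The paper also introduces an auxiliary exponent $0<\alpha'<\alpha$ and a cutoff in $t$ on the \emph{fixed} domain $\Sigma\times[0,1]$, gaining the smallness factor $\epsilon^{\alpha-\alpha'}$; this avoids the bookkeeping of varying the domain $Q_\epsilon$ in the IFT.
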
 The main tool that we use to prove this theorem is the Inverse Function Theorem for Banach spaces. It says that a $C^1$ map is locally invertible at a point iff its linearization is invertible at this point. The idea is to apply the Inverse Function Theorem to reduce the solvability of a nonlinear differential equation to the solvability of its linearized version. However, before it we review the following classically well known result about existence and uniqueness of solutions to linear parabolic partial differential equations. (see \cite{lsu}, p. 320) or (\cite{evans}, p. 350 ff.) 
\rbox{linear}
\begin{theo}\label{linear}
Let $(\Sigma,g)$ be a compact Riemannian manifold of dimension $k$, and set $Q=\Sigma\times [0,T]$. Given a vector valued function $u:Q \to \R^q$, let
\[Lu=\Delta u+\mathbf{a}\cdot\nabla u+\mathbf{b}\cdot u-\partial_t u\]
be a linear parabolic partial differential operator, and consider the initial value problem \rbox{classical}
\begin{equation}\label{classical}
\left\{\begin{array}{lc }
Lu(x,t)=F(x,t),\quad\quad
(x,t)\in\Sigma\times (0,T),\\
u(x,0)=f(x).
\end{array}\right.
\end{equation} Here, the components of $\Delta u$, $\mathbf{a}\cdot\nabla $, $\mathbf{b}\cdot u$, $\partial_t u$ are, respectively, defined by
\[\Delta u^A,\quad \mathbf{a}^{iA}_B(x,t)\dnd[u^B]{x^i},\quad \mathbf{b}^A_B(x,t) u^B, \quad  \dnd[u^A]{t}, \quad 1\leq A \leq q.\] If
\[\mathbf{a}^{iA}_B,\; \mathbf{b}^A_B\in C^{\alpha,\alpha/2}(Q,\R), \quad 1\leq i\leq k,\quad 1\leq A,B\leq q,\] for some $0<\alpha <1$, then for any 
\[F\in C^{\alpha,\alpha/2}(Q,\R^q), \quad f\in C^{2+\alpha}(\Sigma,\R^q),\] there exist a unique solution
$u\in C^{2+\alpha,1+\alpha/2}(Q,\R^q)$ to \textup{(\ref{classical})} such that
\[|u|_Q^{(2+\alpha,1+\alpha/2)}\leq C(|F|_Q^{(\alpha,\alpha/2)}+|f|_\Sigma^{(2+\alpha)})\] holds. Here, the constant $C=C(\Sigma, L, q, T, \alpha)$ only depends on $\Sigma, L, q, T, \alpha$.
\end{theo}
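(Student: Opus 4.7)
The proof is the classical parabolic Schauder theory for linear systems on a compact manifold. I would proceed in three stages: (i) establish the a priori Schauder estimate
\[|u|_Q^{(2+\alpha,1+\alpha/2)}\leq C\bigl(|Lu|_Q^{(\alpha,\alpha/2)}+|u(\cdot,0)|^{(2+\alpha)}\bigr),\]
(ii) deduce existence by the method of continuity, and (iii) extract uniqueness from the estimate applied to the difference of two candidate solutions. Compactness of $\Sigma$ lets me reduce everything, via a finite atlas and a partition of unity, to the Euclidean model $\R^k\times[0,T]$, where classical results apply.

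\textbf{Step 1: A priori Schauder estimate.} For the flat model $\partial_t u-\Delta u=F$ on $\R^k\times(0,T)$ with $u(\cdot,0)=f$, I would use the Gaussian heat kernel $K_t(x)=(4\pi t)^{-k/2}e^{-|x|^2/4t}$ and the explicit Duhamel representation
\[u(x,t)=\int_{\R^k}\! K_t(x-y)f(y)\,dy+\int_0^{t}\!\!\int_{\R^k}\! K_{t-s}(x-y)F(y,s)\,dy\,ds\]
to derive the Schauder bound by direct seminorm estimates exploiting the standard kernel cancellations (the parabolic Calder\'on--Zygmund analysis of \cite{lsu}). Variable H\"older coefficients are then incorporated via Korn's freezing trick: on a small parabolic cylinder centered at $(x_0,t_0)$, replace $L$ by the constant-coefficient operator $L_{(x_0,t_0)}$ obtained by freezing the leading coefficients at that point, apply the model estimate, and absorb the perturbation $(L-L_{(x_0,t_0)})u$ into the right-hand side using the H\"older smallness of the coefficients on shrinking cylinders. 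A finite covering argument globalizes the estimate on $\R^k\times[0,T]$; a finite partition of unity subordinate to a coordinate cover of $\Sigma$ then transfers it to $Q$, the lower-order commutator terms being absorbed into the constant $C=C(\Sigma,L,q,T,\alpha)$.

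\textbf{Step 2: Method of continuity.} Interpolate between $L$ and the pure heat operator by setting
\[L_s:=(1-s)(\Delta-\partial_t)+sL,\qquad s\in[0,1].\]
All $L_s$ share the leading part $\Delta-\partial_t$ and have H\"older coefficients bounded by those of $L$, so Step~1 supplies a Schauder estimate \emph{uniform in $s$}. Consider the bounded linear map
\[\Psi_s:X\longrightarrow Y,\qquad \Psi_s(u):=\bigl(L_s u,\;u(\cdot,0)\bigr),\]
between $X=C^{2+\alpha,1+\alpha/2}(Q,\R^q)$ and $Y=C^{\alpha,\alpha/2}(Q,\R^q)\times C^{2+\alpha}(\Sigma,\R^q)$, and let $S=\{s\in[0,1]\mid \Psi_s \text{ is a Banach-space isomorphism}\}$. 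Then $0\in S$, since the heat equation on the compact manifold $\Sigma$ is uniquely solvable in H\"older classes (via the smoothing semigroup $e^{t\Delta}$ together with Duhamel's formula, using that $-\Delta$ is nonnegative self-adjoint on $\Sigma$). The set $S$ is open by the bounded inverse theorem applied to the affine perturbation $\Psi_s-\Psi_{s_0}=(s-s_0)\bigl(L-(\Delta-\partial_t)\bigr)\oplus 0$, and closed by the uniform estimate of Step~1, which forces the inverses $\Psi_s^{-1}$ to be uniformly bounded and stable under limits in $s$. Hence $S=[0,1]$, and surjectivity at $s=1$ produces the desired solution.

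\textbf{Step 3: Uniqueness and main obstacle.} If $u_1,u_2$ both solve~(\ref{classical}), then $w=u_1-u_2$ satisfies $Lw=0$ and $w(\cdot,0)=0$; applying the a priori estimate to $w$ gives $|w|_Q^{(2+\alpha,1+\alpha/2)}\leq 0$, so $w\equiv 0$. The main technical obstacle sits entirely in Step~1: the delicate parabolic seminorm bookkeeping required to derive the kernel estimate for the heat equation with the sharp exponents $\alpha$ in space and $\alpha/2$ in time, and the perturbative absorption inside Korn's freezing argument. These are carried out in detail in \cite{lsu} (and, for the elliptic analogue, in \cite{gilbarg}); my proposal is essentially to follow their treatment and to package the result for application on the compact Riemannian manifold $\Sigma$ via a partition of unity and the method of continuity.
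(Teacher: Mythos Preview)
Your proposal is a correct and well-organized sketch of the classical Schauder existence/uniqueness argument for linear parabolic systems, essentially the approach carried out in \cite{lsu}. However, the paper itself does not prove this theorem at all: it simply states it as a ``classically well known result'' and cites \cite{lsu}, p.~320 and \cite{evans}, p.~350~ff. So there is nothing to compare on the level of proof strategy; you have supplied an outline where the paper gives only a reference. Your sketch is in line with what those references do, so it is appropriate, though more than what the paper asks for.
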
 

Now, we turn to the proof of Theorem \ref{shortime}.

\begin{proof} At first let $\tilde{Z}$ be the smooth extension of $Z$ constructed at the beginning of this section.
We choose an $\alpha'$ such that $0<\alpha'<\alpha<1$ and use the abbreviation $\partial_t=\partial/\partial t$.\\

\emph{Step} 1 (Construction of an approximate solution). Consider the following initial value problem of a system of linear parabolic partial differential equations: \rbox{approx}
\begin{equation}\label{approx}
\left\{\begin{array}{lc }
(\Delta-\dnd{t})\, v(x,t)=\Pi_{f}(df,df)(x,t)+\tilde{Z}_f((df)^{\underline{k}})(x,t),\quad\quad
(x,t)\in\Sigma\times (0,1),\\
v(x,0)=f(x),     
\end{array}\right.
\end{equation} where we have identified  $f$ with $\iota\circ f$. From the assumption $f\in C^{2+\alpha}(\Sigma,\R^q)$ we get \[\Pi_f(df,df),\,\tilde{Z}_f((df)^{\underline{k}})\in C^\alpha(\Sigma,\R^q)\subset C^{\alpha,\alpha/2}(\Sigma\times [0,1],\R^q),\] and consequently by virtue of the previous Theorem \ref{linear} the existence of a unique solution
\[v\in C^{2+\alpha,1+\alpha/2}(\Sigma\times [0,1],\R^q)\] to the IVP (\ref{approx}). If we denote the desired solution by $u$, then $v$ approximates $u$ at $t=0$ in the following sense,
\[v(x,0)=u(x,0),\quad\quad \partial_t v(x,0)=\partial_t u(x,0).\]

\emph{Step} 2 (Application of the Inverse Function Theorem). Now, putting $Q=\Sigma\times [0,1]$,  we consider the differential operator
\[P(u)=\Delta u-\partial_t u-\Pi_u(du,du)-\tilde{Z}_u((du)^{\underline{k}})\] and note that an $u\in C^{2+\alpha,1+\alpha/2}(\Sigma\times [0,\epsilon],\R^q)$ satisfying $P(u)=0$ is our desired solution.\\

For $0<\alpha'<1$ we introduce the subspaces $X$ and $Y$ in $C^{2+\alpha',1+\alpha'/2}(Q,\R^q)$ and $C^{\alpha',\alpha'/2}(Q,\R^q)$, respectively, by
\begin{align*}X &=\{h\in C^{2+\alpha',1+\alpha'/2}(Q,\R^q)\; |\; h(x,0)=0,\; \partial_t h(x,0)=0\},\\
Y &= \{k\in C^{\alpha',\alpha'/2}(Q,\R^q)\; |\; k(x,0)=0\}.
\end{align*} The spaces $X$ and $Y$ are, by definition, closed subspaces; and  hence Banach spaces. We define a map $\mathcal{P} :X \to Y$ by 
\[\mathcal{P}(h)=P(v+h)-P(v),\quad \mbox{for }\; h\in X.\] From the definition of $P$ and $X$ we see that $\mathcal{P}(h)\in C^{\alpha',\alpha'/2}(Q,\R^q)$ and $\mathcal{P}(h)(x,0)=0$ for $h\in X$ so that in fact $\mathcal{P}(h)\in Y$ holds true. In particular, $\mathcal{P}(0)=0$. $\mathcal{P}$ is Fr$\acute{\mathrm{e}}$chet differentiable in a neighborhood of $h=0$. A direct computation using the definition of $\mathcal{P}$  shows that the Fr$\acute{\mathrm{e}}$chet derivative $\mathcal{P}'(0) :X \to Y$, for $h\in X$, is given by

\begin{align*}\mathcal{P}'(0)(h) &= \Delta h-\partial_{t}h-(d\Pi)\big|_v(h)(dv,dv)-2\Pi_v(dv,dh)\\
&\mspace{23mu}-(d\tilde{Z})\big|_v(h)((dv)^{\underline{k}})
-\tilde{Z}_v(dh\tilde{\wedge} (dv)^{\underline{k-1}}).
\end{align*} Here, 
$\tilde{Z}(dh\tilde{\wedge} (dv)^{\underline{k-1}})=\tilde{Z}((dh\tilde{\wedge} (dv)^{\underline{k-1}})(\vol_g^\sharp))$ and $(d\tilde{Z})(h)((dv)^{\underline{k}})= (d\tilde{Z})(h)((dv)^{\underline{k}}(\vol_g^\sharp))$, respectively. (For the definition of the $\tilde{\wedge}$-product, see Appendix \ref{appA}(a).)        From this it can readily be  verified that $\mathcal{P}'(0):X\to  Y$ is an isomorphism of Banach spaces. In fact, since $v\in C^{2+\alpha,1+\alpha/2}(Q,\R^q)$, from the definition of $\mathcal{P}'(0)$ and Theorem \ref{linear} we see that for any $K\in Y$ there exists a unique $H\in C^{2+\alpha',1+\alpha'/2}(Q,\R^q)$ satisfying 
\begin{equation*}
\left\{\begin{array}{lc }
\mathcal{P}'(0)(H)(x,t)=K(x,t),\quad\quad
(x,t)\in\Sigma\times (0,1),\\
H(x,0)=0.
\end{array}\right.
\end{equation*} We also see that for such a $H$ the following estimate holds:
\begin{equation}\label{inject}
|H|_Q^{(2+\alpha',1+\alpha'/2)}\leq C|K|_Q^{(\alpha',\alpha'/2)}.
\end{equation}  Since $K(x,0)=0$ and $H(x,0)=0$ hold, we obtain  $\partial_t H(x,0)=0$; and thus $H\in X$. From this and the definition of $X$, $Y$ and the expression for $\mathcal{P}'(0)$  we know that $\mathcal{P}'(0)$ is a bounded and surjective linear mapping of Banach spaces. Equation (\ref{inject}) tells us that $\mathcal{P}'(0)$ is injective and the Open Mapping Theorem from functional analysis that also the inverse $\mathcal{P}'(0)^{-1}$ is  bounded. Hence, $\mathcal{P}'(0)$ is an isomorphism. \\

Applying the Inverse Function Theorem for Banach spaces, $\mathcal{P}: X\to Y$ is  a homeomorphism between a sufficiently small neighborhood $\mathcal{U}$ of $0\in X$ and a neighborhood $\mathcal{P}(\mathcal{U})$ of $0\in Y$. This means that we can find a positive number $\delta=\delta(\Sigma, M, Z,f)>0$, depending only on $\Sigma,M,Z$ and $f$, such that the following holds: For any $k\in C^{\alpha',\alpha'/2}(Q,\R^q)$ with $k(x,0)=0$ and $|k|_Q^{(\alpha',\alpha'/2)}<\delta$, there exists a  $h\in C^{2+\alpha',1+\alpha'/2}(Q,\R^q)$ satisfying  
\begin{equation}\label{conditions}
\mathcal{P}(h)=k,\quad h(x,0)=0,\quad \partial_t h(x,0)=0.
\end{equation} Here, $\delta=\delta(\Sigma,M,Z,f)$ is a positive number determined by $\Sigma,M,Z$ and $f$. Setting $u=v+h$ and $w=P(v)$, from (\ref{conditions}) we see that there exists a 
$u\in C^{2+\alpha',1+\alpha'/2}(Q,\R^q)$ satisfying
\begin{equation}\label{intermed}
\left\{\begin{array}{lc }
P(u)(x,t)=(w+k)(x,t),\quad\quad
(x,t)\in\Sigma\times (0,1),\\
u(x,0)=f(x).
\end{array}\right.
\end{equation}
 
\emph{Step} 3 (Short time existence). For a given real number $\epsilon>0$ consider a $C^\infty$ function 
$\zeta:\R\to\R$ satisfying $\zeta(t)=1\;(t\leq\epsilon)$, $\zeta(t)=0\; (t\geq 2\epsilon)$, $0\leq\zeta(t)\leq 1$, $|\zeta'(t)|\leq2/\epsilon\;(t\in\R)$. We note that $w=P(v)\in C^{\alpha,\alpha/2}(Q,\R^q)\subset C^{\alpha',\alpha'/2}(Q,\R^q)$ and that $w(x,0)=0$ holds from the definition of $P(v)$, $v\in C^{2+\alpha,1+\alpha/2}(\Sigma\times [0,1],\R^q)$ and $v(x,0)=f(x)$. By a straightforward computation we see that there exist a constant $C>0$ independent of $\epsilon$ and $w$ such that the estimate
\begin{equation}\label{estimate}
|\zeta w|_Q^{(\alpha',\alpha'/2)}\leq C\epsilon^{(\alpha-\alpha')}|w|_Q^{(\alpha,\alpha/2)}
\end{equation} holds. Set $k=-\zeta w$. Then $k(x,0)=0$. From (\ref{estimate}) we have
$|k|_Q^{(\alpha',\alpha'/2)}<\delta$ for sufficiently small $\epsilon$. Thus, there exists a $u\in C^{2+\alpha',1+\alpha'/2}(\Sigma\times [0,\epsilon],\R^q)$ such that the following special case of (\ref{intermed}) holds:
\begin{equation*}
\left\{\begin{array}{lc }
P(u)(x,t)=0,\quad\quad
(x,t)\in\Sigma\times (0,\epsilon),\\
u(x,0)=f(x).
\end{array}\right.
\end{equation*} In other words, we have obtained a solution $u\in C^{2+\alpha',1+\alpha'/2}(\Sigma\times [0,\epsilon],\R^q)$ to the initial value problem
\begin{equation*}
\left\{\begin{array}{lc }
(\Delta-\partial_t)\, u(x,t)=\Pi_u(du,du)(x,t)+\tilde{Z}_u((du)^{\underline{k}})(x,t),\quad\quad
(x,t)\in\Sigma\times (0,\epsilon),\\
u(x,0)=f(x).
\end{array}\right.
\end{equation*} As we have
\[f\in C^{2+\alpha}(\Sigma,\R^q),\quad \Pi_u(du,du),\,\tilde{Z}_u((du)^{\underline{k}})\in C^{\alpha,\alpha/2}(\Sigma\times [0,\epsilon],\R^q),\]     we see by Theorem \ref{linear} that
\[u\in C^{2+\alpha,1+\alpha/2}(\Sigma\times [0,\epsilon],\R^q).\] Due to compactness of $\Sigma$ and  continuity of $u$ we always can reach that $u(\Sigma\times [0,\epsilon'])\subset\tilde{M}$ holds true if we choose $0<\epsilon'<\epsilon$ small enough . Replacing $\epsilon$ by $\epsilon'$ if necessary, we may assume that  $u(\Sigma\times [0,\epsilon])\subset\tilde{M}$ holds true. Thus, $u$ is a solution to the IVP (\ref{iniprobl}) in $\Sigma\times [0,\epsilon]$. It is also clear from the above proof that $\epsilon>0$ is a positive number only depending on $\Sigma, M,Z, f$ and $\alpha$.
\end{proof}

As a result of combining Proposition \ref{iniequi} and Theorem \ref{shortime}, we obtain the following. \rbox{shorty}

\begin{cor}\label{shorty} Let $(\Sigma,g)$ and $(M,G)$ be Riemannian manifolds, and $\Sigma$ be compact and oriented. Furthermore, let  $Z\in\Gamma(\Hom(\Lambda^{\! k}TM,TM))$. For a given  $C^{2+\alpha}$ map $f\in C^{2+\alpha}(\Sigma,M)$ there exist  a positive number $T=T(\Sigma, M,Z, f, \alpha)>0$ and  a map $\varphi\in C^{2+\alpha,1+\alpha}(\Sigma\times [0,T],M)$ such that
\begin{equation}
\left\{\begin{array}{lc }
\tau(\varphi_t)(x)=Z((d\varphi_t)^{\underline{k}})(x)+\dnd[\varphi_t]{t}{(x)},\quad\quad
(x,t)\in\Sigma\times (0,T),\\
\varphi(x,0)=f(x)
\end{array}\right.
\end{equation}  holds. Here, $T=T(\Sigma, M, Z,f, \alpha)>0$ is a constant depending on $\Sigma, M,Z, f$ and $\alpha$ alone.
\end{cor}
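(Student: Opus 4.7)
The plan is to combine Proposition \ref{iniequi} with Theorem \ref{shortime} directly; the corollary is essentially a pullback statement from the ambient Euclidean problem to the intrinsic problem on $M$.

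First, I would invoke Theorem \ref{shortime} with the given data $(\Sigma, g)$, $(M, G)$, $Z$, $f$ and $\alpha$. This produces a positive number $T = T(\Sigma, M, Z, f, \alpha) > 0$ and a map
\[ u \in C^{2+\alpha, 1+\alpha/2}(\Sigma \times [0, T], \tilde{M}) \]
which solves the ambient parabolic initial value problem (\ref{iniprobl}) in $\Sigma \times [0, T)$, where $\tilde{M} \subset \R^q$ is the tubular neighborhood of $\iota(M)$ fixed earlier in Section \ref{stime}. Here we use the identification of $f$ with $\iota \circ f \in C^{2+\alpha}(\Sigma, \R^q)$, which is admissible because $\iota$ is a smooth isometric embedding.

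Next, I would apply Proposition \ref{iniequi} to this $u$. The first part of that proposition guarantees that $u(\Sigma \times [0, T)) \subset \iota(M)$, so we may define
\[ \varphi := \iota^{-1} \circ u : \Sigma \times [0, T] \to M. \]
Since $\iota : M \hookrightarrow \R^q$ is a smooth embedding onto its image, $\iota^{-1}$ is smooth as a map from $\iota(M)$ to $M$, and hence $\varphi$ inherits the H\"older regularity of $u$, i.e.\ $\varphi \in C^{2+\alpha, 1+\alpha/2}(\Sigma \times [0, T], M)$ (which is by definition a subset of $C^{2+\alpha, 1+\alpha}(\Sigma \times [0,T], M)$ as stated). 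The second part of Proposition \ref{iniequi} then tells us that $\varphi$ solves the intrinsic parabolic initial value problem (\ref{iniprob}) in $\Sigma \times (0, T)$ with initial condition $\varphi(\cdot, 0) = f$.

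The dependence of $T$ on $(\Sigma, M, Z, f, \alpha)$ is inherited verbatim from Theorem \ref{shortime}, so no new parameters enter. There is really no obstacle here, since all the analytic work has already been carried out in Theorem \ref{shortime} (the Inverse Function Theorem argument for the ambient IVP) and in Proposition \ref{iniequi} (the equivalence of the two formulations, based on the tubular neighborhood retraction $\pi$ and the construction of $\tilde{Z}$). The corollary is merely the packaging of these two results into a statement purely about maps into $M$.
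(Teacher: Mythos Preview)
Your proposal is correct and matches exactly what the paper intends: the corollary is stated there without a separate proof, simply as ``a result of combining Proposition \ref{iniequi} and Theorem \ref{shortime}'', and you have spelled out precisely that combination. One small quibble: your parenthetical ``which is by definition a subset of $C^{2+\alpha,1+\alpha}$'' has the inclusion backwards (higher H\"older exponent means \emph{more} regularity), but the $1+\alpha$ in the corollary's statement is evidently a typo for $1+\alpha/2$, consistent with the rest of the paper, so this does not affect your argument.
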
 

From regularity theory for  solutions to  linear parabolic partial differential equations, we obtain the following (see Appendix \ref{appB}, Theorem \ref{ellireg}).

\begin{theo}[Short time existence]\label{shor} Let $(\Sigma,g)$ and $(M,G)$ be  Riemannian manifolds, and $\Sigma$ be compact  and oriented. Furthermore, let  $Z\in\Gamma(\Hom(\Lambda^{\! k}TM,TM))$. For a given $C^{2+\alpha}$ map $f\in C^{2+\alpha}(\Sigma,M)$ there exist a positive number $T=T(\Sigma, M, Z,f, \alpha)>0$ and
a map $\varphi\in C^{2+\alpha,1+\alpha/2}(\Sigma\times [0,T],M)\cap C^{\infty}(\Sigma\times (0,T),M)$ such that
\begin{equation}\label{para}
\left\{\begin{array}{lc }
\tau(\varphi_t)(x)=Z((d\varphi_t)^{\underline{k}})(x)+\dnd[\varphi_t]{t}{(x)},\quad\quad
(x,t)\in\Sigma\times (0,T),\\
\varphi(x,0)=f(x)
\end{array}\right.
\end{equation} holds. Here, $T=T(\Sigma, M,Z, f, \alpha)>0$ is a constant depending on $\Sigma, M,Z, f$ and $\alpha$ alone.
\end{theo}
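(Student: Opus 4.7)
The strategy is to combine Corollary \ref{shorty}, which already provides a H\"older solution on the closed cylinder $\Sigma\times[0,T]$, with a standard parabolic bootstrap based on the interior Schauder theory recalled as Theorem \ref{ellireg}. Virtually all the work is already done; only the smoothness claim on the open cylinder is new.

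First I apply Corollary \ref{shorty} directly to the initial datum $f\in C^{2+\alpha}(\Sigma,M)$, obtaining a positive time $T=T(\Sigma,M,Z,f,\alpha)>0$ and a map $\varphi$ in the claimed parabolic H\"older class on $\Sigma\times[0,T]$ solving (\ref{para}) classically in $\Sigma\times(0,T)$. This takes care of the regularity up to $t=0$, so the only remaining task is to upgrade $\varphi$ to $C^\infty$ on the open cylinder $\Sigma\times(0,T)$.

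For the bootstrap I pass, as in Section \ref{stime}, to the composition $u=\iota\circ\varphi$ with the Nash embedding $\iota:M\hookrightarrow\R^q$. By Proposition \ref{iniequi}, $u\in C^{2+\alpha,1+\alpha/2}(\Sigma\times[0,T],\R^q)$ satisfies the semilinear parabolic system
\begin{equation*}
(\Delta-\partial_t)u = \Pi_u(du,du)+\tilde Z_u((du)^{\underline{k}}),
\end{equation*}
in which the coefficient maps $z\mapsto\Pi_z$ and $z\mapsto\tilde Z_z$ are smooth functions of $z\in\R^q$ by construction. Reading this as a linear equation $(\Delta-\partial_t)u=F$ with $F\in C^{\alpha,\alpha/2}$ on any closed subcylinder $\Sigma\times[\delta,T-\delta]\subset\Sigma\times(0,T)$ and invoking the interior parabolic Schauder estimate of Theorem \ref{ellireg} on a slightly smaller subcylinder promotes $u$ one step up in the H\"older scale, from $C^{k+\alpha,k/2+\alpha/2}$ to $C^{k+1+\alpha,(k+1)/2+\alpha/2}$. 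Iterating over $k\in\N$ and exhausting $\Sigma\times(0,T)$ by an increasing family of such compact subcylinders yields $u\in C^\infty(\Sigma\times(0,T),\R^q)$, and hence $\varphi=\iota^{-1}\circ u\in C^\infty(\Sigma\times(0,T),M)$.

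The only real obstacle is purely notational bookkeeping in the bootstrap: at each step one must verify that composing $u$ with the smooth maps $\Pi$ and $\tilde Z$ and forming the quadratic, respectively $k$-multilinear, combinations with $du$ does not cost any regularity. This is a routine consequence of the chain and Leibniz rules for parabolic H\"older spaces, so no new estimates beyond those furnished by Theorem \ref{ellireg} enter the argument.
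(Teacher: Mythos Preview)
Your proposal is correct and follows exactly the route the paper takes: the paper simply remarks that Theorem~\ref{shor} is obtained from Corollary~\ref{shorty} together with the parabolic regularity theory of Theorem~\ref{ellireg}, and your write-up spells out this bootstrap in detail. One cosmetic point: your half-step indexing ``$C^{k+\alpha,k/2+\alpha/2}\to C^{k+1+\alpha,(k+1)/2+\alpha/2}$'' is nonstandard for parabolic scales; it is cleaner to phrase the iteration in terms of the derivative counts $(p,q)$ appearing in Theorem~\ref{ellireg}(2), where each application gains two spatial and one temporal derivative, but this does not affect the validity of the argument.
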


\section{Long time existence}  \label{ltime}

To prove long time existence of a solution $\varphi:\Sigma\times [0,T)\to M$ to the initial value problem (IVP) for the system of nonlinear parabolic partial differential
equations 
\rbox{inipro}
\begin{equation}\label{inipro}
\left\{\begin{array}{lc }
\tau(\varphi_t)(x)=Z((d\varphi_t)^{\underline{k}})(x)+\dnd[\varphi_t]{t}{(x)},\quad\quad
(x,t)\in\Sigma\times (0,T),\\
\varphi(x,0)=f(x),
\end{array}\right.
\end{equation} one has to show that it exists when $T=\infty$. Short time existence of a solution to (\ref{inipro}) can be guaranteed   by Theorem \ref{shor} in contrast to long time existence. As already mentioned in Section \ref{gmethod} it becomes an essential matter to control the growth rate of the solution $\varphi(x,t)$  in time $t$. In order to get a grip on the "blowing up" effects of the nonlinear terms of the equation, the dimension of $\Sigma$ and the compactness of $M$ plays a crucial role in this game. In fact, in $\dim(\Sigma)>1$ the nonlinear terms possibly may destroy the long time behavior of our solutions. 
The main ingredients are the energy estimates and the maximum principle for parabolic equations. Both are typical tools in the theory of linear partial differential equations to get a priori estimates that allow to show e.g. uniqueness  and stability  of solutions. For an introduction to this topic see \cite{evans}, \cite{renrog}. Here, we state a version of the maximum principle that will suffice our needs. A proof can be found in \cite{nish}, p. 142. \\
\begin{lemma}[Maximum principle] Let $(\Sigma,g)$ be a compact Riemannian manifold. Furthermore, let $\Delta$ be the Hodge-Laplacian of $\Sigma$ and $L=\Delta-\dnd{t}$ be the heat operator. Let $u\in C^0(\Sigma\times [0,T))\cap C^{2,1}(\Sigma\times (0,T))$ be a real valued function in $\Sigma\times [0,T)$, which is $C^2$ in $\Sigma$ and $C^1$ in $(0,T)$. If $u$ satisfies $Lu\geq 0$ in $\Sigma\times (0,T)$, then
\[\max_{\Sigma\times [0,T)}u=\max_{\Sigma\times\{0\}}u\] holds. Said in words, the maximum of $u$ in the "cylinder" $\Sigma\times [0,T)$ is achieved at the bottom of the cylinder, i.e. in $\Sigma\times\{0\}$.
\end{lemma}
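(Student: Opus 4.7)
The plan is the classical parabolic maximum principle argument, adapted to the boundaryless setting. Because $\Sigma$ is compact without boundary, the parabolic boundary of the cylinder $\Sigma \times [0,T_0]$ is just its bottom face $\Sigma \times \{0\}$, so one expects the maximum on any closed sub-cylinder to be achieved there. The upgrade to the half-open interval $[0,T)$ will come from exhausting by closed sub-cylinders $[0,T_0]$ with $T_0 \nearrow T$ and interpreting the right-hand side as the supremum.

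First I would fix $T_0 \in (0,T)$ and introduce the standard perturbation $v_\epsilon(x,t) = u(x,t) - \epsilon t$ for small $\epsilon > 0$. A direct computation gives $L v_\epsilon = Lu + \epsilon \geq \epsilon > 0$ in $\Sigma \times (0,T)$, so $v_\epsilon$ satisfies a strict differential inequality, which is exactly what is needed to rule out interior maxima. Since $\Sigma \times [0,T_0]$ is compact and $v_\epsilon$ is continuous there, $v_\epsilon$ attains its maximum at some point $(x_0,t_0)$.

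The heart of the argument is to show $t_0 = 0$. Suppose instead that $t_0 > 0$. Because $\Sigma$ has no boundary, $x_0$ is an interior point of $\Sigma$, so in any local coordinates the Hessian of $v_\epsilon(\cdot, t_0)$ at $x_0$ is negative semi-definite; using the sign convention in the paper, $\Delta = g^{ij}\nabla_i\nabla_j$ on functions, and hence $\Delta v_\epsilon(x_0,t_0) \leq 0$. For the time derivative, if $t_0 \in (0,T_0)$ then $\partial_t v_\epsilon(x_0,t_0) = 0$, while if $t_0 = T_0$ then $\partial_t v_\epsilon(x_0,t_0) \geq 0$ since $v_\epsilon(x_0,\cdot)$ attains its maximum at the right endpoint of $[0,T_0]$ (this is the only nonroutine sub-case; it uses one-sided differentiability from the $C^{2,1}$ hypothesis). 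Either way $Lv_\epsilon(x_0,t_0) = \Delta v_\epsilon - \partial_t v_\epsilon \leq 0$, contradicting $Lv_\epsilon \geq \epsilon > 0$. Hence $t_0 = 0$.

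It then follows that for every $(x,t) \in \Sigma \times [0,T_0]$,
\[
u(x,t) - \epsilon t \;=\; v_\epsilon(x,t) \;\leq\; \max_{\Sigma \times \{0\}} v_\epsilon \;=\; \max_{\Sigma \times \{0\}} u.
\]
Letting $\epsilon \to 0^+$ gives $u(x,t) \leq \max_{\Sigma \times \{0\}} u$ on $\Sigma \times [0,T_0]$, and since $T_0 < T$ was arbitrary, the inequality holds on all of $\Sigma \times [0,T)$; the reverse inequality is trivial, yielding the claim. The only subtle point I anticipate is the handling of the endpoint $t_0 = T_0$, where the usual "at an interior max $\partial_t v = 0$" argument must be replaced by the one-sided inequality $\partial_t v \geq 0$; everything else is bookkeeping.
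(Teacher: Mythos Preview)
Your argument is correct and is precisely the classical weak parabolic maximum principle proof one finds in textbooks. Note, however, that the paper does not supply its own proof of this lemma: it simply states the result and cites \cite{nish}, p.~142, so there is nothing in the paper to compare your approach against; your proof is essentially the standard one that the cited reference would contain.
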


In the sequel  we denote by $S^1$ the unit circle in $\R^2$, carrying the induced metric by $\R^2$. Set $E=\mathrm{Hom}(\Lambda^{\! k}TM,TM)$.
From Corollary \ref{energyestimates} in Section \ref{gmethod} and the maximum principle we gain the following estimates for a solution to the IVP (\ref{inipro}).\rbox{engyesti}

\begin{prop}[Energy estimates]\label{engyesti} Assume that $\Sigma= S^1$ and $Z$ is a Lorentz force. Let $\varphi=\gamma\in C^{2,1}(S^1\times [0,T),M)\cap C^\infty(S^1\times (0,T),M)$ be a solution to the \emph{IVP} $\mathrm{(\ref{inipro})}$ and set $\gamma_t(s)=\gamma(s,t)$.  Then the following  hold: \\

\emph{(1)} If $|Z|_{L^\infty(M,E)}<\infty$, then for all $(s,t)\in S^1\times [0,T)$,

\begin{align*}
e(\gamma_t)(s)\leq e^{\lambda T}\,\underset{s\in\Sigma}{\sup}\;e(f)(s).
\end{align*}

\emph{(2)} If $\underset{M}{\sup}|R^M\!|< \infty$ and $|Z|_{L^\infty(M,E)}, |\nabla Z|_{L^\infty(M,E)}<\infty$, then for all $(s,t)\in S^1\times [0,T)$,
\begin{align*}
\Big|\dnd[\gamma]{t}(s,t)\Big|\leq e^{CT}\,\underset{s\in\Sigma}{\sup}\;\Big|\dnd[\gamma]{t}(s,0)\Big|.
\end{align*} Here, $\lambda=\lambda(M,Z)$ and $\mu=\mu(M,\nabla Z)$ are the constants defined in \emph{Corollary \ref{energyestimates}} and $C=C(\Sigma,M,Z,\nabla Z, f,T)=4\,\underset{M}{\sup}|R^M\!|\,e^{\lambda T}\,\underset{\Sigma}{\max}\;e(f)+\lambda+\mu\,e^{\frac{\lambda T}{2}} \underset{\Sigma}{\max}\;e(f)^{1/2}$ is a constant depending on $\Sigma,M,Z,\nabla Z,f$ and $T$ alone. 
\end{prop}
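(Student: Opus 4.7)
The plan is to turn both Bochner-type differential inequalities from Corollary \ref{energyestimates} into pointwise bounds by the standard trick of exponential damping combined with the maximum principle for the heat operator $L=\Delta-\partial_t$ stated just above. The hard estimates are already encoded in Corollary \ref{energyestimates}; here the only remaining work is to convert differential inequalities on $S^1\times(0,T)$ into sup-norm bounds.

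For (1), I would start from the inequality
\begin{equation*}
\dnd[e(\gamma_t)]{t}\leq \Delta e(\gamma_t)+\lambda\,e(\gamma_t)
\end{equation*}
of Corollary \ref{energyestimates}(1$'$) and set $u(s,t):=e^{-\lambda t}e(\gamma_t)(s)$. A direct computation shows $\partial_t u\leq \Delta u$, i.e.\ $Lu\geq 0$ in $S^1\times(0,T)$. Since $\gamma\in C^{2,1}(S^1\times[0,T),M)$, the function $u$ is continuous up to $t=0$ and $C^{2,1}$ on $S^1\times(0,T)$, so the maximum principle applies on every slab $S^1\times[0,T')$ with $T'<T$ and yields $\max u=\max_{t=0}u=\sup_{S^1}e(f)$. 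Undoing the rescaling gives $e(\gamma_t)(s)\leq e^{\lambda t}\sup_{S^1}e(f)\leq e^{\lambda T}\sup_{S^1}e(f)$, which is (1).

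For (2), the key observation is that once (1) is in hand, the quadratic coefficient $4|R^M|e(\gamma_t)+\mu\,e(\gamma_t)^{1/2}$ appearing in the inequality
\begin{equation*}
\dnd[\kappa(\gamma_t)]{t}\leq \Delta\kappa(\gamma_t)+4|R^M|\,e(\gamma_t)\kappa(\gamma_t)+\lambda\,\kappa(\gamma_t)+\mu\,e(\gamma_t)^{1/2}\kappa(\gamma_t)
\end{equation*}
of Corollary \ref{energyestimates}(2$'$) becomes a \emph{constant} after substituting the bound $e(\gamma_t)\leq e^{\lambda T}\max_\Sigma e(f)$. This linearises the inequality to $\partial_t\kappa\leq \Delta\kappa+C\kappa$, with exactly the constant $C=4\sup_M|R^M|\,e^{\lambda T}\max_\Sigma e(f)+\lambda+\mu\,e^{\lambda T/2}\max_\Sigma e(f)^{1/2}$ appearing in the statement. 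Exponential damping $w(s,t):=e^{-Ct}\kappa(\gamma_t)(s)$ then yields $Lw\geq 0$, and the maximum principle together with $\kappa=\tfrac12|\partial_t\gamma|^2$ produces the claimed pointwise estimate on $|\partial\gamma/\partial t|$.

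The only subtle point is making sure the maximum principle is applied on a half-open slab; I would handle this by applying it on $S^1\times[0,T']$ for arbitrary $T'<T$ (which is admissible since the required regularity holds on the closed slab as $\gamma$ is $C^{2,1}$ up to $t=0$ and smooth on the interior) and then letting $T'\to T$. No blow-up analysis or bootstrap is needed here, since the nonlinear interaction between $e(\gamma_t)$ and $\kappa(\gamma_t)$ has already been tamed by the one-dimensionality of $\Sigma$ via the shape of the Bochner inequalities in Corollary \ref{energyestimates}.
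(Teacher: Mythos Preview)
Your proposal is correct and follows essentially the same approach as the paper: exponential damping of $e(\gamma_t)$ and $\kappa(\gamma_t)$ by the factors $e^{-\lambda t}$ and $e^{-Ct}$, respectively, followed by the maximum principle for $L=\Delta-\partial_t$, with part (1) feeding into the coefficient bound for part (2). Your extra remark about working on closed slabs $S^1\times[0,T']$ and letting $T'\to T$ is a harmless clarification of a point the paper leaves implicit.
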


\begin{proof} ad (1): From (1') of Corollary \ref{energyestimates} we see
\[L e(\gamma_t)=\Big(\Delta-\dnd{t}\Big)\,e(\gamma_t)\geq-\lambda e(\gamma_t).\]
Putting $v(s,t)= e^{-\lambda t} \,e(\gamma_t)(s)$, a straightforward computation shows that $v$ satisfies
 $Lv\geq 0$ in $S^1\times (0,T)$. Hence, from the maximum principle and the definition of the energy density $e(\gamma_t)$
 
  \[e^{-\lambda t}\,e(\gamma_t)(s)=v(s,t)\leq \underset{s\in S^1}{\max}\;v(s,0)=\underset{s\in S^1}{\max}\;e(f)(s)\] holds at any $(s,t)\in S^1\times [0,T)$.\\
  
  ad (2): Let $C$ be the constant defined as above. From (1) of Proposition \ref{engyesti} and (2') of Corollary \ref{energyestimates} we see that for $v(s,t):= e^{-C t} \,\kappa(\gamma_t)(s)$, we have $L\kappa(\gamma_t)\geq 0$ in $S^1\times (0,T)$.  Hence, from the maximum principle and the definition of the energy density $\kappa(\gamma_t)$
 
 \[e^{-C t}\Big|\dnd[\gamma]{t}(s,t)\Big|^2=2\,v(s,t)\leq 2\,\underset{s\in S^1}{\max}\;v(s,0)=\underset{s\in S^1}{\max}\;\Big|\dnd[\gamma]{t}(s,0)\Big|^2\]
 holds at any $(s,t)\in S^1\times [0,T)$.
\end{proof}

Proposition \ref{engyesti} implies that the growth rate of a solution $\gamma$ to the initial value problem (\ref{inipro}) is uniformly bounded on $S^1\times [0,T)$ with respect to the time variable $t\in [0,T)$, if $\underset{M}{\sup}|R^M\!|< \infty$ and  $|Z|_{L^\infty(M,E)}, |\nabla Z|_{L^\infty(M,E)}<\infty$. More precisely we state the following.

\begin{prop}\label{fundesti}Assume that $\Sigma=S^1$. Furthermore let  $(M,G)$ be a compact Riemannian manifold and $\varphi=\gamma\in C^{2,1}(S^1\times [0,T),M)\cap C^\infty(S^1\times (0,T),M)$ be a solution to the \emph{IVP} $\mathrm{(\ref{inipro})}$. Set $\gamma_t(s)=\gamma(s,t)$.  Let $Z$ be a Lorentz force. Then for any $0<\alpha<1$ there exists a positive number $C=C(\Sigma,M,Z,f,\alpha,T)>0$ such that
\[  |\gamma(\cdot,t)|_{C^{2+\alpha}(S^1,M)}+\Big|\dnd[\gamma]{t}(\cdot,t)\Big|_{C^{\alpha}(S^1,M)}\leq C\]
holds at any $t\in [0,T)$. Here, $C=C(\Sigma,M,Z,\nabla Z, f,\alpha,T)$ is a constant only depending on $\Sigma, M,Z,\nabla Z, f,\alpha$ and $T$. 
\end{prop}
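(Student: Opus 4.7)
I would work in the ambient space picture of Section \ref{stime}: with $\iota: M \hookrightarrow \R^q$ the Nash isometric embedding, set $u = \iota \circ \gamma$, which by Proposition \ref{iniequi} satisfies the embedded parabolic system
\[ (\Delta - \partial_t)\, u = \Pi_u(du,du) + \tilde{Z}_u(du) =: F(x,t) \]
on $S^1 \times (0,T)$ with $u(\cdot,0) = \iota \circ f \in C^{2+\alpha}(S^1, \R^q)$. Since the spatial $C^{2+\alpha}$-norm of $\gamma(\cdot,t)$ and $C^\alpha$-norm of $\partial_t \gamma(\cdot,t)$ are controlled by the slicewise $C^{2+\alpha}$- and $C^\alpha$-norms of $u$, it suffices to produce a uniform bound
\[ |u|^{(2+\alpha,1+\alpha/2)}_{S^1 \times [0,T]} \leq C(\Sigma, M, Z, f, \alpha, T). \]
The plan is to obtain such a bound by a bootstrap, using Proposition \ref{engyesti} to start the chain and Theorem \ref{linear} to close it.

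\textbf{Step 1 (uniform $L^\infty$ bounds).} Compactness of $M$ makes $\sup_M |R^M|$ finite and, since $Z$ is a smooth Lorentz force on $M$, yields finite $|Z|_{L^\infty(M,E)}$ and $|\nabla Z|_{L^\infty(M,E)}$. Thus both clauses of Proposition \ref{engyesti} apply and give a constant $C_0 = C_0(\Sigma, M, Z, f, T)$ with $|du|_{L^\infty} + |\partial_t u|_{L^\infty} \leq C_0$ on $S^1 \times [0,T)$; compactness of $\iota(M) \subset \R^q$ also bounds $|u|$. Because $\Pi$ and $\tilde{Z}$ are smooth and we only evaluate them on a compact set where $u$ and $du$ live, the right-hand side satisfies $|F|_{L^\infty(S^1 \times [0,T))} \leq C_1(\Sigma, M, Z, f, T)$.

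\textbf{Step 2 (bootstrap to H\"older regularity).} With $F \in L^\infty$ and $C^{2+\alpha}$ initial data, parabolic $L^p$-theory (Calder\'on--Zygmund / Krylov) applied componentwise to $(\Delta - \partial_t)u = F$ places $u$ in the parabolic Sobolev space $W^{2,1}_p(S^1 \times [0,T], \R^q)$ for every finite $p$, with norms bounded by a constant depending only on the data from Step 1. Choosing $p$ sufficiently large, parabolic Sobolev embedding gives $u \in C^{1+\beta,(1+\beta)/2}$ for any $\beta < 1$; in particular $du$ is H\"older continuous in $(x,t)$. Since $F$ is a smooth function of $(u,du)$, it follows that $F \in C^{\alpha,\alpha/2}(S^1 \times [0,T], \R^q)$ for every $0 < \alpha < 1$, with $|F|^{(\alpha,\alpha/2)}_Q \leq C_2(\Sigma, M, Z, f, \alpha, T)$.

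\textbf{Step 3 (Schauder closure).} Now Theorem \ref{linear}, applied to the linear operator $L = \Delta - \partial_t$ with the H\"older right-hand side $F$ and initial datum $\iota \circ f \in C^{2+\alpha}(S^1, \R^q)$, yields
\[ |u|^{(2+\alpha,1+\alpha/2)}_{S^1 \times [0,T]} \leq C\bigl(|F|^{(\alpha,\alpha/2)}_Q + |\iota \circ f|^{(2+\alpha)}_\Sigma\bigr) \leq C(\Sigma, M, Z, f, \alpha, T). \]
Restricting to any fixed time slice $\{t\} \times S^1$ produces the required spatial $C^{2+\alpha}$-bound for $\gamma(\cdot,t)$ and spatial $C^\alpha$-bound for $\partial_t \gamma(\cdot,t)$, which completes the proof.

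The main obstacle is Step 2, i.e.\ upgrading an $L^\infty$ right-hand side to H\"older regularity of the solution uniformly in $t \in [0,T)$; everything else is a direct application of the energy bounds already at our disposal and of the linear Schauder theorem cited as Theorem \ref{linear}. It is essential for this uniformity that the $L^\infty$ estimates from Proposition \ref{engyesti} depend on $T$ only through the exponential constants $e^{\lambda T}$ and $e^{CT}$, so the full interval $[0, T]$ can be handled in one go.
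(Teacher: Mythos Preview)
Your argument is correct, but the paper reaches the same conclusion by a slightly different and more elementary bootstrap. After Step~1 (which matches the paper), you invoke parabolic $L^p$-theory and Sobolev embedding to pass from an $L^\infty$ right-hand side to $C^{1+\beta}$ regularity of $u$. The paper instead exploits that, once $\partial_t\gamma$ is bounded by Proposition~\ref{engyesti}, one can freeze~$t$ and regard $\gamma(\cdot,t)$ as a solution of the \emph{elliptic} equation $\Delta\gamma = \Pi_\gamma(d\gamma,d\gamma)+Z_\gamma(d\gamma)+\partial_t\gamma$ on $S^1$; the interior elliptic Schauder estimate of Theorem~\ref{ellireg}(3) then gives $|\gamma(\cdot,t)|_{C^{1+\alpha}}\leq C$ directly from the $L^\infty$ bound on the right-hand side. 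This supplies the spatial $C^\alpha$ control of $\Pi_\gamma(d\gamma,d\gamma)+Z_\gamma(d\gamma)$, and the final step is the parabolic Schauder estimate of Theorem~\ref{ellireg}(4), exactly as in your Step~3. The paper's route stays entirely within the Schauder toolbox of Appendix~\ref{appB} and avoids the Calder\'on--Zygmund machinery; your route is equally valid but calls on an external ingredient the paper never states.
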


\begin{proof} We set $\gamma'_t(s)=\dnd[\gamma]{s}(s,t)$. All metrics and norms here are the natural induced ones. As in the proof of Proposition \ref{shortime}, we assume the $(M,G)$ is realized as a Riemannian submanifold in a $q$-dimensional Euclidean space $\R^q$ via an isometric imbedding $\iota:M\hookrightarrow\R^q$ and that the vector valued function $\gamma: S^1\times [0,T)\to\R^q$ is a solution to the IVP (\ref{iniprobl}). Furthermore, let $\tilde{Z}$ be the smooth extension of $Z$, constructed at the beginning of Section \ref{stime}. However, since $\gamma$, from the assumption, is a solution to the IVP (\ref{inipro}), the solution stays inside $M\subset\R^q$ and therefore all expressions, terms and constants $c_i$, appearing in the course of the proof will only depend on $Z$ and its covariant derivatives, but not on $\tilde{Z}$ and its covariant derivatives. Thus, for simplicity we denote $\tilde{Z}$ by $Z$.  \\

Now, depending on the point of view,  $\gamma$ satisfies an elliptic and, on the other hand, a parabolic partial differential equation. We will exploit both  positions in order to attain our result. Taking the first view, $\gamma$ satisfies the system of elliptic partial differential equations
\[\Delta\gamma=\Pi_\gamma(d\gamma,d\gamma)+Z_\gamma(d\gamma)+\dnd[\gamma]{t},\] where $\Delta$ is the Hodge-Laplacian in $\Sigma$. Noting Proposition \ref{engyesti}, we see that the right hand side of the above equation is bounded independent of $t\in [0,T)$, i.e. we have
\begin{equation}\label{estiA}
\Big|\Pi_\gamma(d\gamma,d\gamma)(\cdot,t)+Z_\gamma(d\gamma)(\cdot,t)+\dnd[\gamma]{t}(\cdot,t)\Big|_{L^\infty(S^1,\R^q)}\leq c_1(\Sigma,M,Z,f,T).
\end{equation} In fact, for all $(s,t)\in S^1\times [0,T)$ we have
\begin{eqnarray*}
\Big|\Pi_\gamma(d\gamma,d\gamma)(s,t)+Z_\gamma(d\gamma)(\cdot,t)+\dnd[\gamma]{t}(s,t)\Big|=
\Big|(\nabla_{\gamma'_t}d\pi)(\gamma'_t)(s)+Z_\gamma(\gamma'_t)(s)+
\dnd[\gamma_t]{t}(s)\Big|\\
\leq|\nabla d\pi|_{L^\infty(M,E)}\,|\gamma'_t(s)|^2+\frac{1}{2}|Z|^2_{L^\infty(M,E)}+\frac{1}{2}|\gamma'_t(s)|^2+\Big|\dnd[\gamma_t]{t}(s)\Big|.
\end{eqnarray*}
The right hand side of this inequality can be estimated from above by Proposition \ref{engyesti} with a constant
$c_1$ only depending on $\Sigma,M,Z,\nabla Z, f$ and $T$ (actually $c_1$ also depends on $|\nabla d\pi|_{L^\infty(M,E)}$, but we won't pick this up in our notation). Here, $|Z|_{L^\infty(M,E)}=\sup_M\langle \nabla Z,\nabla Z\rangle^{1/2}$ and $|\nabla d\pi|_{L^\infty(M,E)}=\sup_M\langle \nabla d\pi,\nabla d\pi\rangle^{1/2}$. This shows (\ref{estiA}). \\

Since the image of $\gamma$ is always contained in the bounded set $ M\subset\R^q$, at any $t\in [0,T)$ we have
\begin{equation}\label{bounded}|\gamma(\cdot,t)|_{L^\infty(S^1,\R^q)}\leq c_2(M).\end{equation}   Hence, by the Schauder estimate (see Appendix \ref{appB}, Theorem \ref{ellireg}) for the solutions to an elliptic partial differential equation, at any $t\in [0,T)$ we have
\begin{align}\label{estiB}
|\gamma(\cdot,t)|_{C^{1+\alpha}(S^1,\R^q)}&\leq c_3(\Sigma,\alpha)
\Big(\;\underset{t\in [0,T)}{\sup}\;|\Delta\gamma(\cdot,t)|_{L^\infty(S^1,\R^q)}+\notag
\underset{t\in[0,T)}{\sup}\;|\gamma(\cdot,t)|_{L^\infty(S^1,\R^q)}\;\Big)\\
&\leq c_4(\Sigma,M,Z,\nabla Z,f,\alpha,T).
\end{align} Taking the second view, $\gamma$ is also a solution to the system of parabolic partial differential equations
\[L\gamma=\Pi_\gamma(d\gamma,d\gamma)+Z_\gamma(d\gamma),\] where $L=\Delta-\dnd{t}$ is the heat operator in $S^1$.  Regarding (\ref{estiB})  we see that
\[|\Pi_\gamma(d\gamma,d\gamma)(\cdot,t)+Z_\gamma(d\gamma)(\cdot,t)|_{C^{\alpha}(S^1,\R^q)}\leq c_5(\Sigma,M,Z,\nabla Z, f,\alpha,T)\] holds. Using the Schauder estimate for linear parabolic partial differential equations (see Appendix \ref{appB}, Theorem \ref{ellireg}) , we get for any $t\in [0,T)$

\begin{align*}
|\gamma(\cdot,t)|_{C^{2+\alpha}(S^1,\R^q)}+\Big|\dnd[\gamma]{t}(\cdot,t)\Big|_{C^{\alpha}(S^1,\R^q)}\\
& \mspace{-175mu}\leq c_6(\Sigma,\alpha)\Big (\;\underset{t\in [0,T)}{\sup}|L\gamma(\cdot,t)|_{C^\alpha(S^1,\R^q)}+\underset{t\in [0,T)}{\sup}|\gamma(\cdot,t)|_{L^\infty(S^1,\R^q)}\Big)\\
&\mspace{-175mu}\leq c_7(\Sigma,M,Z,\nabla Z, f,\alpha,T).
\end{align*} \end{proof}

Now, we are ready to proof the main theorems.\\

\textbf{Proof of Theorem 1.}
Short time existence is guaranteed by Theorem \ref{shor}, namely there exists a positive number $T=T(\Sigma,M,Z,f,\alpha)>0$ such that, without making any curvature assumptions, the initial value problem (\ref{pama}) has a solution $\gamma\in C^{2+\alpha,1+\alpha/2}(S^1\times [0,T],M)\cap C^{\infty}(S^1\times (0,T),M)$ in $S^1\times [0,T]$. We have to demonstrate now that our solution can not blow up in finite time if $M$ is compact, i.e. that our solution $\gamma$ can be extended to $S^1\times [0,\infty)$. Setting
\[T_0=\sup\,\{ t\in [0,\infty)\,|\, (\ref{pama}) \mbox{ has a solution in } S^1\times [0,t]\}, \]
we must show that $T_0=\infty$ holds. Assume that this would not be the case. Then choose any sequence of numbers $\{t_i\}\subset [0,T_0)$ such that $t_i\to T_0$ as $i$ tends to $\infty$. As in the proof of Proposition \ref{fundesti} we regard $M$ to be an isometrically imbedded submanifold in some Euclidean space $\R^q$ and each $\gamma(\cdot,t_i)\in C^\infty(S^1,M)$ as a $\R^q$-valued function. We set $\gamma_t(s)=\gamma(s,t)$, $\gamma'=\gamma'_t=\dnd[\gamma]{s}$,  $\partial_t=\dnd{t}$ and choose a positive number $\alpha'$ such that $0<\alpha<\alpha'<1$. Since $S^1$ is compact, it follows that the imbedding $C^{k+\alpha'}(S^1,\R^q)\hookrightarrow C^{k+\alpha}(S^1,\R^q)$ is compact.
By Proposition \ref{fundesti} the sequences \[\{\gamma(\cdot,t_i)\}\;\; \mbox{ and }\;\;\{\partial_t\gamma(\cdot,t_i)\},\] respectively, are bounded in $C^{2+\alpha'}(S^1,\R^q)$ and in $C^{\alpha'}(S^1,\R^q)$. Thus, there exist a subsequence $\{t_{i_k}\}$ of $\{t_{i}\}$  and functions
\[\gamma(\cdot,T_0)\in C^{2+\alpha}(S^1,\R^q)\;\; \mbox{ and }\;\;\partial_t\gamma(\cdot,T_0)\in C^{\alpha}(S^1,\R^q)\] such that the subsequences 
\[\{\gamma(\cdot,t_{i_k})\}\;\; \mbox{ and }\;\;\{\partial_t\gamma(\cdot,t_{i_k})\},\] respectively, converge uniformly to $\gamma(\cdot,T_0)$ and  $\partial_t\gamma(\cdot,T_0)$, as $t_{i_k}\to T_0$. Since for each $t_{i_k}$ we have
\[\partial_t\gamma(\cdot,t_{i_k})=\frac{\nabla }{\partial s}\gamma'(\cdot,t_{i_k})-Z(\gamma')(\cdot,t_{i_k}),\] we also get at $T_0$
\[\partial_t\gamma(\cdot,T_0)=\frac{\nabla }{\partial s}\gamma'(\cdot,T_0)-Z(\gamma')(\cdot,T_0).\]
Consequently,  we see that (\ref{pama}) has a solution in $S^1\times [0,T_0]$. Application of Theorem \ref{shor} with $\gamma(\cdot,T_0)$ as initial value, yields an positive number $\epsilon>0$ such that the IVP 
\begin{equation}
\left\{\begin{array}{lc }
\frac{\nabla }{\partial s}\gamma'_t(s)=Z(\gamma'_t)(s)+\dnd[\gamma_t]{t}{(s)},\quad\quad
(s,t)\in S^1\times (T_0,T_0+\epsilon),\\
\gamma(s,0)=\gamma(s,T_0)
\end{array}\right.
\end{equation} has a solution $\gamma\in C^{2+\alpha,1+\alpha/2}(S^1\times [T_0,T_0+\epsilon],M)$ in $S^1\times [T_0,T_0+\epsilon]$. Noting that this and the previous solution coincide on $S^1\times \{0\}$, we can patch them together to a solution $\gamma \in C^{2+\alpha,1+\alpha/2}(S^1\times [0,T_0+\epsilon],M)$ to the IVP (\ref{pama}). From the arguments concerning the differentiability of the solutions in Theorem \ref{shor} we see that $\gamma$ is $C^\infty$ in $S^1\times (0,T_0+\epsilon)$. Hence, (\ref{pama}) has a solution in $S^1\times [0,T_0+\epsilon]$ which contradicts the definition of $T_0$. Consequently $T_0=\infty$. The uniqueness of $\gamma$ immediately follows from Theorem \ref{unique}. \hspace{0.4cm}$\square$\\

\textbf{Proof of Theorem 2.}
As in the proof of Proposition \ref{shortime} we regard $u,v$ as vector valued functions
$u,v: S^1\times [0,T)\to\iota(M)\subset\R^q$, and consider $u,v$ as solutions to the system of nonlinear parabolic differential equations (\ref{iniprobl}). Let $\tilde{Z}$ and $\tilde{Z}'$ be the smooth extensions of $Z$ and $Z'$, respectively, constructed 	as at the beginning of Section \ref{stime}. However, since the solution must stay in $M\cong\iota(M)\subset\R^q$, the majority of  appearing expressions, involving $\tilde{Z}$ and $\tilde{Z}'$, only depend on $Z$ and $Z'$. 
Define  a function $h:\Sigma\times [0,T)\to\R$ by
\[h(s,t)=|u(s,t)-v(s,t)|^2,\quad (s,t)\in S^1\times [0,T).\]
For $u_1,u_2\in C^2(S^1,\R^q)$, one computes
\[\Delta\langle u_1,u_2\rangle=\langle\Delta u_1,u_2\rangle+2\langle du_1,du_2\rangle+\langle u_1,\Delta u_2\rangle,\]
and hence for $u_1=u_2=u-v$ we get
\[\Delta h=\Delta\langle u-v,u-v\rangle=2\langle\Delta u-\Delta v, u-v\rangle+2|du-dv|^2.\]
On the other hand, one has
\[\dnd[h]{t}=2\langle\Delta u-\Delta v-\big(\Pi_u(du,du)-\Pi_{v}(dv,dv)+Z_u(du)-Z'_{v}(dv)\big),u-v\rangle.\]
Then for $L=\Delta-\dnd{t}$ it follows 
\begin{align}\label{eins}
Lh&=\langle\Pi_u(du,du)-\Pi_{v}(dv,dv),u-v\rangle+\langle Z_u(du)-Z'_{v}(dv),u-v\rangle\\
&\mspace{23mu}+2|du-dv|^2.\notag
\end{align}
Now, for $0<T_0<T$ we choose a number $r=r(T_0)$ such that $u(S^1\times [0,T_0])\cup v(S^1\times [0,T_0])$ is contained in the open ball $B(0,r)=\{x\in\R^q\, | \, |x|<r\}$. Rewriting
\begin{align*}Z_u(du)-Z'_{v}(dv)
=(Z_u-Z_{v})(du)+(Z_{v}-Z'_{v})(du)+Z'_{v}(du-dv)\end{align*} and applying the Mean Value Theorem to $Z_u-Z_{v}$, we get for any $(s,t)\in S^1\times [0,T_0]$
\begin{align}\label{zwei}
|\langle Z_u(du)-Z_v(dv),u-v\rangle|\\
&\mspace{-80mu}\leq c_1\,|u-v|^2+2^{1/2}|Z-Z'|_{L^\infty(M,E)}\,e(u_t)^{1/2}|u-v|\notag\\
&\mspace{-59mu}+c_3\,|du-dv||u-v|\notag\\
&\mspace{-80mu}\leq c_1\,|u-v|^2+|Z-Z'|^2_{L^\infty(M,E)}+c_2\,|u-v|^2\notag\\
&\mspace{-59mu}+c_3\,|du-dv||u-v|\notag.
\end{align} Here,  $c_1,c_2,c_3\geq 0$ are nonnegative constants. $c_1$ only depends on $\Sigma, M, \nabla \tilde{Z}, T_0$ and on the maximum value of the energy density $e(u_t)$ on $\Sigma\times [0,T_0]$,  $c_2$ only  on the maximum value of the energy density $e(u_t)$ on $\Sigma\times [0,T_0]$, whereas $c_3$ only depends on $\overline{B(0,r)}$ and $Z'$, i.e. on $T_0$ and  $Z'$. Note that the energy densities can be globally estimated independent of $T_0$ by virtue of Proposition \ref{engyesti}. In fact, noting $\sup_{\overline{B(0,r)}}\,|\nabla \tilde{Z}|<\infty$  (here  $|\nabla \tilde{Z}|=\langle\nabla \tilde{Z},\nabla \tilde{Z}\rangle^{1/2}$ as usual)
 and applying the Mean Value Theorem  yields Lipschitz continuity, namely \[|\tilde{Z}_x(\xi)-\tilde{Z}_y(\xi)|\leq (\sup_{\overline{B(0,r)}}|\nabla \tilde{Z}|)|\xi||x-y|\] holds, for all $x,y\in \overline{B(0,r)}\subset\R^q$ and all $\xi\in\Lambda^{\! k}\R^q$. Here, we have identified $\Lambda^{\! k}T_x\R^q\cong\Lambda^{\! k}T_y\R^q\cong\Lambda^{\! k}\R^q$  by parallel transport. From this, (\ref{zwei}) can readily be verified.  Similarly rewriting
\begin{align*}
\Pi_u(du,du)-\Pi_{v}(dv,dv)\\
&\mspace{-80mu}=(\Pi_u-\Pi_{v})(du,du)+\Pi_{v}(du-dv,du)+\Pi_{v}(dv,du-dv)
\end{align*} and applying the Mean Value Theorem to $\Pi_u-\Pi_{v}$, we get for any $(s,t)\in S^1\times [0,T_0]$
\begin{align} \label{drei}
|\langle\Pi_u(du,du)-\Pi_{v}(dv,dv),u-v\rangle|\\
&\mspace{-80mu}\leq c_4\,|u-v|^2+c_5\,|du-dv||u-v|,\notag
\end{align} where $c_4, c_5\geq 0$ are constants only depending on $\Sigma, M$, on the maximum values of the energy densities $e(u_t)$ and $e(v_t)$ on $S^1\times [0,T_0]$, and on derivatives of the canonical projection $\pi:\tilde{M}\to M$ up to third order.
 Using Cauchy's inequality $ab\leq\epsilon a^2+(4\epsilon)^{-1}b^2$ $(a,b\geq 0,\;\epsilon>0)$ for the terms \[\mbox{\emph{constant}}\cdot|du-dv||u-v|,\]  we obtain from (\ref{eins}), (\ref{zwei}) and (\ref{drei}) for any $(s,t)\in S^1\times [0,T_0]$
\begin{align*}\label{eins}
Lh &\geq -|\langle\Pi_u(du,du)-\Pi_{v}(dv,dv),u-v\rangle|-|\langle Z_u(du)-Z'_{v}(dv),u-v\rangle|\\
&\mspace{23mu}+2|du-dv|^2\\
&\geq -C|u-v|^2-|Z-Z'|^2_{L^\infty(M,E)}=-Ch-|Z-Z'|^2_{L^\infty(M,E)},
\end{align*} where $C\geq 0$ is a constant only depending on $\Sigma, M, Z,\nabla \tilde{Z}, Z',T_0$,  on the maximum values of the energy densities $e(u_t)$ and $e(v_t)$ on $S^1\times [0,T_0]$, and on derivatives of the canonical projection $\pi:\tilde{M}\to M$ up to third order.  Integrating and using  the Divergence Theorem  yields for any $t\in [0,T_0]$
\[\ddt\int_{\Sigma} h(\cdot,t)\,d\vol_g\leq C\int_{\Sigma} h(\cdot,t)\,d\vol_g + 2\pi|Z-Z'|^2_{L^\infty(M,E)}.\] Here,  $g$ denotes the canonical metric of $\Sigma=S^1\subset\R^2$ induced by $\R^2$. Applying Gronwall's Lemma  to the function $H:[0,T)\to \R$ defined by $H(t)=\int_{\Sigma} h(\cdot,t)\,d\vol_g$, we get  for any $t\in [0,T_0]$
\[H(t)\leq e^{Ct}\big(H(0)+2\pi t|Z-Z'|^2_{L^\infty(M,E)}\big).\] This together with $H(0)\leq 2\pi|u_0-v_0|^2_{L^\infty(\Sigma,M)}$ yields the desired estimate.\hspace{2.63cm} $\square$\\

\textbf{Conclusion and outlook.} We see that the energy estimates (Corollary \ref{enestimates}) are crucial to make the "long time existence proof\," work. If $k=\dim(\Sigma)=1$, the maximum principle can be applied to obtain good a priori estimates for the energy densities. Even in the case $k>1$, the maximum principle is not applicable and the proof breaks down. The greater $k>1$ is, the worse the nonlinearities become. Perhaps in $\dim(\Sigma)=2$, where the nonlinearities are "only" of quadratic order in $du$, i.e. $|Z((du)^{\underline{k}})|\leq C |du|^k$ ($C>0$ a constant) for a bounded $k$-force $Z$, existence of weak long time solution can be shown. It would be an interesting task to prove the existence of long time solutions in this case especially regarding the relevance of this question in String theory. Also an open question is the third item of program presented in Section \ref{gmethod}: Does one always find a convergent subsequence of a long time solution to the IVP (\ref{pama}) when $(M,G)$ is compact Riemannian manifold?

\begin{center}\textbf{Acknowledgement}\end{center}
{\footnotesize As this work is a result of my thesis I would like to pronounce here some credits.
Firstly I would like to thank my supervisor Christian B\"ar. He has been extremely helpful, insightful  and encouraging at all times and it has been a great pleasure to work with him. I would also like to thank Gerhard Huisken for the enlightening discussions which have given me a lot of insights. Thanks are also due to my room mate Florian Hanisch for many interesting and useful conversations and to my colleagues from our differential geometry team. Finally I would also like to acknowledge the support of the Sonderforschungsbereich Raum-Zeit-Materie (SFB 647) of the DFG.}

\begin{appendix}  \section{Notation and definitions}\label{appA}

\textbf{(a)} \textbf{Geometric notation.} Let $(\Sigma,g)$ and $(M,G)$ be Riemannian manifolds and $(E,\nabla^E,h)$ be a Riemannian vector bundle over $M$. For simplicity we denote the metrics $g,G,h$ and all the induced metrics and connections on the various tensor bundles   by $\langle\cdot,\cdot\rangle$ and $\nabla$, respectively. If $\Sigma$ is oriented, then we denote by $\vol_g$ the canonical volume form on $\Sigma$ (similarly for $M$). If $\Sigma$  is not orientable, then in expressions
\[\int_\Sigma f\,d\vol_g,\] where $f:\Sigma\to \R$ is an integrable function, the symbol $d\vol_g$ is to mean the Riemannian measure which can be defined for any Riemannian manifold.  The characteristic function of a measurable set $A\subset\Sigma$ is denoted by $\chi_A$ and its volume by $V(A)=\int_\Sigma \chi_A\,d\vol_g$. 

\begin{dfn}  A \emph{Riemannian vector bundle} over $M$ is a triple $(E,\nabla^E,h)$ consisting of a real vector bundle $E$ and a connection $\nabla^E$ on $E$ that is compatible with the metric $h$ of $E$, i.e. $\nabla^E_X h=0$ for all $X\in T_x M$, for all $x\in M$. \end{dfn}
This induces connections $\nabla$ on the bundles $\Lambda^{\!  k} T^*\! M\otimes E$ for $k=1,\ldots,\dim(M)$.
The induced curvature for $\omega\in\Gamma(\Lambda^{\!  k} T^*\! M\otimes E)$ is defined by
\begin{equation}
R(X_1,X_2)\omega=\{\nabla_{X_1}\nabla_{X_2}-\nabla_{X_2}\nabla_{X_1}-\nabla_{[X_1,X_2]}\}\omega.
\end{equation}

As for real-valued $k$-forms one can define an exterior differential $d$ and a co-differential $\delta$ for forms with values in bundles (see \cite{xin}).
The \emph{Hodge-Laplace operator} $\Delta:\Gamma(\Lambda^{\!  k} T^*\! M\otimes E)\to\Gamma(\Lambda^{\!  k} T^*\! M\otimes E)$ then is given by
     \begin{equation}
     \Delta =-\{d\delta+\delta d\},
     \end{equation} and the \emph{rough Laplacian} 
     $\bar{\Delta}:\Gamma(\Lambda^{\!  k} T^*\! M\otimes E)\to\Gamma(\Lambda^{\!  k} T^*\! M\otimes E)$ by
     \begin{equation}
     \bar{\Delta}\omega=\{\nabla_{e_i}\nabla_{e_i}-\nabla_{\nabla_{e_i}e_i}\}\omega
     \end{equation}     
     
Furthermore on $\Gamma(\Lambda^{\! k} T^{*}\! M\otimes E)$  we use the following convention for the induced metric. Let $\{e_i\}$ be an orthonormal frame near $x\in M$, then for $\alpha,\beta\in\Gamma(\Lambda^{\! k} T^{*} \! M\otimes E)$ we define
\[\langle\alpha,\beta\rangle_{\wedge}:=\sum_{i_1<\cdots< i_k}\langle\alpha(e_{i_1},\ldots,e_{i_k}),\beta(e_{i_1},\ldots,e_{i_k})\rangle.\] We distinguish this metric from that naturally induced metric for non totally skew-symmetric $k$-linear vector valued tensor fields $\alpha,\beta\in\Gamma(\bigotimes^k T^*\! M\otimes E)$ which is given by
\[ \langle\alpha,\beta\rangle=\langle\alpha(e_{i_1},\ldots,e_{i_k}),\beta(e_{i_1},\ldots,e_{i_k})\rangle.\]
Note that this two definitions are related by a factor $1/k!$, namely for $\alpha,\beta\in\Gamma(\Lambda^{\! k} T^{*} \! M\otimes E)\subset\Gamma(\bigotimes^k T^*\! M\otimes E)$,  we have
\[ \langle\alpha,\beta\rangle_{\wedge}=\frac{1}{k!}\langle\alpha,\beta\rangle.\] In this paper we supress the subscript $\wedge$ with the convention that $\langle\alpha,\beta\rangle$ is to mean $\langle\alpha,\beta\rangle_{\wedge}$ if $\alpha,\beta\in\Gamma(\Lambda^{\! k} T^{*} \! M\otimes E)$. In particular, for the volume element we have $|\vol_G|=\langle\vol_G ,\vol_G\rangle^{1/2}=1$ due to this convention.\\

We recall some notions from Linear Algebra.
Let $(V,g)$ and $(W,h)$ be a Euclidean vector spaces. The
isomorphism $V\to V^*, \, \xi\mapsto
g(\xi,\cdot)$ is denoted by $\xi^\flat$ for $\xi\in V$
and its inverse by $\omega^{\sharp}$ for $\omega\in V^*$.
One can extend these isomorphisms  to 
$\Lambda^{\!  k} V$ and $\Lambda^{\!  k} V^*$. On
decomposable $k$-vectors it is defined by
$(\xi_1\wedge\ldots\wedge\xi_k)^{\flat}:=\xi_1^\flat\wedge\ldots\wedge\xi_k^\flat$
and extended by linearity; similarly
 $(\omega_1\wedge\ldots\wedge\omega_k)^{\sharp}:=\omega_1^\sharp\wedge\ldots\wedge\omega_k^\sharp$ on
 decomposable $k$-forms. Here, we use the convention
\begin{equation*} 
(\omega_1\wedge\ldots\wedge\omega_k)(\ovec{\xi})=\sum_{\sigma\in S_k}(-1)^{\sigma}\omega_1(\xi_{\sigma(1)})\cdots \omega_k(\xi_{\sigma(k)}),
\end{equation*}     where $S_k$ denotes the permutation group of order $k$, i.e. $\sigma$ runs over all $k$-permutations. The sign  $(-1)^\sigma$ of the permutation equals $+1$ if the permutation $\sigma$ is even and $-1$ if it is odd.
More general,    By the universal property of the exterior product this induces a linear map $A_1\tilde{\wedge}\ldots\tilde{\wedge} A_k :\Lambda^{\!  k} V\to \Lambda^{\!  k} W$, denoted by the same symbol, such that on decomposable $k$-vectors, we have \begin{equation*}(A_1\tilde{\wedge}\ldots\tilde{\wedge} A_k)(\xi_1\wedge\ldots\wedge\xi_k)= \sum_{\sigma\in S_k}(-1)^{\sigma} A_1(\xi_{\sigma(1)})\wedge\ldots\wedge A_k(\xi_{\sigma(k)}).
\end{equation*} For a single linear map $A:V\to W$ we define a  linear map  $A^{\underline{k}}:\Lambda^{\!  k} V\to \Lambda^{\!  k} W $  by
\begin{equation}\label{fakultaet}
A^{\underline{k}}:=\frac{1}{k!}\,A^k,
\end{equation} where $A^k$ denotes the $k$-fold $\tilde{\wedge}$-product of $A$ with itself,
\[A^k=\underbrace{A\tilde{\wedge}\ldots\tilde{\wedge} A}_{\mbox{\footnotesize{k-times}}}.\] Note that for $\ovec{\xi}\in V$ with $|\xi_i|\leq 1$ ($i=1,\ldots, k$), we have $|A^{\underline{k}}(\xi_1\wedge\ldots\wedge\xi_k)|\leq |A|^k$. Here, $|A|=\big[\sum_i\left\langle A(e_i),A(e_i)\right\rangle\big]^{1/2}$ for any orthonormal basis $\{e_i\}$ of $V$.
 Let   $X=\mathrm{Hom}(V,W)$ denote the vector space of endomorphisms from $V$ to $W$ and set $\tilde{\Lambda}^k X=\mathrm{Hom}(\Lambda^{\!  k} V,\Lambda^{\!  k} W)$. There is a natural product $\tilde{\Lambda}^k X\otimes\tilde{\Lambda}^l X\to \tilde{\Lambda}^{k+l}X$ given by
\[(A_1\tilde{\wedge}\ldots\tilde{\wedge} A_k)\otimes (A_{k+1}\tilde{\wedge}\ldots\tilde{\wedge} A_{k+l})\mapsto A_{1}\tilde{\wedge}\ldots\tilde{\wedge} A_{k+l}\] which is associative and symmetric. Note that in general $\tilde{\Lambda}^k X\neq\Lambda^{\!  k} X$, e.g. for $A\in X, A \neq 0$ we have $A\tilde{\wedge}  A\neq 0$ in $\tilde{\Lambda}^2 X,$ but $A\wedge A=0$ in $\Lambda^{\!  2} X$. Let  $\ovec{A}: E\to F$ be bundle homomorphisms,  $(E,\nabla^E)$ and $(F,\nabla^F)$ be bundles with connection over a Riemannian manifold $(M,G)$ and $\eta,\ovec{\xi}\in\Gamma(TM)$. Then we define a connection $\tilde{\nabla}$ on $\tilde{\Lambda}^k X$ (here $X=\mathrm{Hom}(E,F)$) by
\begin{eqnarray*}\lefteqn{\big(\tilde{\nabla}_\eta(A_1\tilde{\wedge}\ldots\tilde{\wedge} A_k)\big)(\xi_1\wedge\ldots\wedge\xi_k)  :=}\\
&&\nabla_\eta(A_1\tilde{\wedge}\ldots\tilde{\wedge} A_k)(\xi_1\wedge\ldots\wedge\xi_k)
-(A_1\tilde{\wedge}\ldots\tilde{\wedge} A_k)\big(\nabla_\eta(\xi_1\wedge\ldots\wedge\xi_k)\big).\end{eqnarray*} For convenience we have denoted  the natural induced connections  on $\Lambda^{\!  k} E$ and $\Lambda^{\!  k} F$, respectively, simply by $\nabla$. It follows immediately that the Leibniz rule is satisfied, i.e. \begin{eqnarray*}\tilde{\nabla}_\eta(A_1\tilde{\wedge}\ldots\tilde{\wedge} A_k)= \tilde{\nabla}_\eta A_1\tilde{\wedge} A_2\tilde{\wedge}\ldots\tilde{\wedge}A_k+\cdots+A_1\tilde{\wedge} A_2\tilde{\wedge}\ldots\tilde{\wedge}A_{k-1}\tilde{\wedge}\tilde{\nabla}_\eta A_k.\end{eqnarray*}

\textbf{(b)} \textbf{Function spaces.} Let $(\Sigma,g)$ and $(M,G)$ be compact Riemannian manifolds and $(E,\nabla^E,h)$ be a Riemannian vector bundle over $M$. As usual we denote the continuous, the $k$-times continuous differentiable and the smooth functions from $\Sigma$ to $M$ by $C^0(\Sigma,M)$, $C^k(\Sigma,M)$ and $C^\infty(\Sigma,M)$, respectively. The smooth sections in $E$ with basis $M$ are denoted by $\Gamma(M,E)$. If the reference to the base space is clear, we just write $\Gamma(E)=\Gamma(M,E)$. For $M=\R$ and $0\leq k\leq \infty$ we set $C^k(\Sigma)=C^k(\Sigma,\R)$. Let $|\cdot|$ denote the norm induced by the h of $E$. Then for $1\leq p\leq\infty$ the $L^p$-spaces $L^p(M,E)$ are defined as measurable sections in $E$ with finite norm $|s|_{L^p(M,E)}<\infty$. Here, for $p=\infty$, we put $|s|_{L^\infty(M,E)}=\inf\{r\in\R\; |\; |u|\leq r \mbox{ holds a.e.}\}$ and for $1\leq p<\infty$ \[|s|_{L^p(M,E)}=\left(\int_M |s|^p\,d\vol_G\right)^{1/p}.  \]

If $E=M\times \R^q$ is the trivial  bundle with canonical metric and trivial connection over $M$, for $1\leq p\leq \infty$ we set $L^p(M,\R^q)=L^p(M,M\times\R^q)$  and especially for $q=1$ we  write $L^p(M)=L^p(M,\R)$. By $L^p(\Sigma,M)$ we mean the space $\{u\in L^p(\Sigma,\R^q)\; |\; u(\Sigma)\subset M \}$, where  $M\subset\R^q$ is regarded as an isometrically imbedded submanifold in some Euclidean space $\R^q$. Let $0<\alpha<1$ be a positive real number, $k$ be a nonnegative integer, and $U\subset\R^n$ be an open subset in $\R^n$.  Then the H\"older spaces are denoted by $C^\alpha(U)$ and for $k\geq 1$ by $C^{k+\alpha}(U)$, respectively. It is well-known that one can define in a similar way a H\"older norm and H\"older spaces $C^{k+\alpha}(M)$ on a  Riemannian manifold $M$ by means of parallel translation (see \cite{joy}, Chapter 1).
For a vector valued function $u:M\to \R^q$,  we say that $u$ belongs to $C^{k+\alpha}(M,\R^q)$ if all its components $u^i$ belong to $C^{k+\alpha}(M).$ Finally, by  $C^{k+\alpha}(\Sigma,M)$ we mean the space $\{u\in C^{k+\alpha}(\Sigma,\R^q)\; |\; u(\Sigma)\subset M \}$, where  $M\subset\R^q$ is regarded as an isometrically imbedded submanifold in some Euclidean space $\R^q$.

\section{Analytical toolbox}\label{appB}

Given $r>0$, set $B(0,r)=\{x\in\R^n\; |\; |x|<r\}.$  Let $P$ be a linear elliptic partial differential operator given by
\[ P=\sum_{i,j=1}^n a^{ij}(x)\frac{\partial^2}{\partial x^i\partial x^j}+\sum_{i=1}^n b^i(x)\dnd{x^i} +d(x). \] 
Assume that
 that $P$ is uniformly elliptic, i.e. that
\[\lambda|\xi|^2\leq \sum_{i,j=1}^n a^{ij}(x)\xi^i\xi^j\leq\Lambda|\xi|^2\] holds for some constants $0<\lambda\leq\Lambda<\infty$ and for any $x\in B(0,r)$ and $\xi\in\R^n$. Given $T>0$, set $Q=B(0,r)\times (0,T)$. For a function $u:Q\to \R$, we set

\[\langle u\rangle_x^{(\alpha)}=\underset{x\neq x'}{\underset{(x,t),(x',t)\in Q}{\sup}}\frac{|u(x,t)-u(x',t)|}{|x-x'|^\alpha},\]
\[\langle u\rangle_t^{(\alpha)}=\underset{t\neq t'}{\underset{(x,t),(x,t')\in Q}{\sup}}\frac{|u(x,t)-u(x,t')|}{|t-t'|^\alpha}.\]
The norms $|u|_Q^{(\alpha,\alpha/2)}$ and $|u|_Q^{(2+\alpha,1+\alpha/2)}$ are defined as  (\ref{norms}) in Section \ref{stime}. By $C^{\alpha,\alpha/2}(Q), \,C^{2+\alpha,1+\alpha/2}(Q)$ we denote the H\"older spaces with respect to these norms. We then have the following.

\begin{theo}\label{ellireg} 
\emph{\textbf { a) Differentiability of solutions}}\\
\emph{(1)}  Given $0<\alpha<1$, assume that $a^{ij},b^i,d,f\in C^\alpha(B(0,r))$. Then $u\in C^{2+\alpha}(B(0,r))$ holds if $u\in C^2(B(0,r))$ satisfies the linear partial differential equation  \[Pu(x)=f(x) \tag{$\star$}.\]
Furthermore, if $a^{ij},b^i,d,f\in C^{k+\alpha}(B(0,r))$ for a given  $k\geq 1$, then a solution $u$ to \emph{($\star$)} is $C^{k+2+\alpha}$. In particular, if $a^{ij},b^i,d,f\in C^{\infty}(B(0,r))$, then $u\in C^{\infty}(B(0,r))$.\\

\emph{(2)}  Given $0<\alpha<1$, assume that $a^{ij},b^i,d\in C^\alpha(B(0,r))$ and $f\in C^{\alpha,\alpha/2}(Q)$. Then $u\in C^{2+\alpha,1+\alpha/2}(Q)$ holds, if $u\in C^{2,1}(Q)$ satisfies the following linear parabolic partial differential equation  \[\Big(P-\dnd{t}\Big)\,u(x,t)=f(x,t) \tag{$\star\star$}.\]\\
Furthermore, let $p,q$ be nonnegative integers. Given $\beta, \kappa$ with $|\beta|\leq p,\,|\beta|+2\kappa\leq p,\,\kappa\leq q$, assume  that $D^\beta_x a^{ij},D^\beta_x b^i,D^\beta_x d\in C^{\alpha}(B(0,r))$  and $D^\beta_x D^\kappa_t f\in C^{\alpha,\alpha/2}(Q)$. Then a solution $u$ to \emph{($\star\star$)} satisfies $D^\beta_x D^\kappa_t u\in C^{\alpha,\alpha/2}(Q)$ for any $\beta, \kappa$ with $|\beta|+2\kappa\leq p+2,\, \kappa\leq q+1$.  In particular, $a^{ij},b^i,d\in C^{\infty}(B(0,r))$ and $f\in C^{\infty}(Q)$ imply that $u\in C^{\infty}(Q)$.\\

\emph{\textbf { b) Schauder estimates}} \\
\emph{(3)} Let $f\in C^\alpha(B(0,r))$. If $u\in C^2(B(0,r))$ satisfies \emph{($\star$)}
 then $u\in C^{2+\alpha}(B(0,r))$ and
\begin{align*}
|u|_{C^{1+\alpha}(B(0,r/2))} & \leq  C\big(|f|_{L^\infty(B(0,r))}+|u|_{L^\infty(B(0,r))}\big),\\
|u|_{C^{2+\alpha}(B(0,r/2))}& \leq  C\big(|f|_{C^\alpha(B(0,r))}+|u|_{L^\infty(B(0,r))}\big)
\end{align*} hold. Here, $C$ is a constant only determined by $n,\alpha,\Lambda/\lambda,|a^{ij}|_{C^\alpha(B(0,r))},$ $|b^{i}|_{C^\alpha(B(0,r))},|d|_{C^\alpha(B(0,r))}$.\\

\emph{(4)} Let $0\leq t\leq T$ and $f(\cdot,t)\in C^\alpha(B(0,r))$. If $u(\cdot,t)\in C^2(B(0,r))$ satisfies \emph{($\star\star$)} 
then $u(\cdot,t)\in C^{2+\alpha}(B(0,r))$ and
\begin{align*}
|u(\cdot,t)|_{C^{\alpha}(B(0,r/2))}
&\leq C\Big(\sup_{t\in [0,T]} |f(\cdot,t)|_{L^\infty(B(0,r))}+ \sup_{t\in [0,T]} |u(\cdot,t)|_{L^\infty(B(0,r))} \Big) 
\end{align*}\vspace{-20pt}
\begin{align*}
|u(\cdot,t)|_{C^{2+\alpha}(B(0,r/2))}+\Big|\dnd[u]{t}(\cdot,t)\Big|_{C^\alpha(B(0,r))}\\
&\mspace{-175mu}\leq C\Big(\sup_{t\in [0,T]} |f(\cdot,t)|_{C^\alpha(B(0,r))}+ \sup_{t\in [0,T]} |u(\cdot,t)|_{L^\infty(B(0,r))} \Big)       \end{align*} hold. Here, $C$ is a constant only determined by $n,\alpha,\Lambda/\lambda,|a^{ij}|_{C^\alpha(B(0,r))},$ $|b^{i}|_{C^\alpha(B(0,r))},|d|_{C^\alpha(B(0,r))}$.
\end{theo}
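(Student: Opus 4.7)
The statement is a classical package of interior Schauder estimates combined with a bootstrap argument; it is not original to this paper, and complete proofs are given in the standard references \cite{gilbarg} for the elliptic part and \cite{lsu}, \cite{evans} for the parabolic part. The strategy divides into two halves: first prove the Schauder estimates (3), (4), then deduce the differentiability assertions (1), (2) by bootstrapping. I will sketch the structure.

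\textbf{Kernel estimates and the freezing-of-coefficients step.} Begin with the constant-coefficient model $P_0=\sum a^{ij}_0\partial_i\partial_j$; after an affine change of coordinates this reduces to the Laplacian $\Delta$. The Newtonian potential $u(x)=\int\Gamma(x-y)f(y)\,dy$ solves $\Delta u=f$, and a careful analysis of the singular integral operator $f\mapsto\partial_i\partial_j\int\Gamma(x-y)f(y)\,dy$ turns H\"older continuity of $f$ into a H\"older bound on $D^2 u$. For variable coefficients, freeze at an arbitrary $x_0\in B(0,r/2)$: write $P=P_{x_0}+(P-P_{x_0})$, where $P_{x_0}$ has constant leading coefficients and, by the H\"older continuity of the $a^{ij}$, the remainder has small leading coefficients on a small ball $B(x_0,\rho)$. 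Applying the constant-coefficient estimate on $B(x_0,\rho)$ and controlling the error by the interpolation inequality $|u|_{C^2}\le\epsilon|u|_{C^{2+\alpha}}+C(\epsilon)|u|_{L^\infty}$ allows one to absorb the $\epsilon$-term into the left-hand side; a standard covering argument extends the local estimate to $B(0,r/2)$ and yields (3). Estimate (4) is obtained by the same freezing-and-absorb scheme applied to $P-\partial_t$, using the Gauss--Weierstrass kernel and the anisotropic H\"older norm that weights time by half a spatial derivative.

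\textbf{Bootstrap for higher regularity.} Once (3) is in place, assertion (1) follows by differentiating $(\star)$: if $a^{ij},b^i,d,f\in C^{k+\alpha}$, then each $\partial_\ell u$ satisfies an equation of the same form with coefficients and source in $C^{k-1+\alpha}$, so iterating the Schauder estimate yields $u\in C^{k+2+\alpha}$ by induction on $k$. The $C^\infty$ case is the limiting version. Assertion (2) is the parabolic bootstrap, obtained in the same way by differentiating $(\star\star)$ in space and in time; the constraint $|\beta|+2\kappa\le p+2$, $\kappa\le q+1$ in the statement is exactly the accounting that each time derivative costs two spatial derivatives under parabolic scaling.

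\textbf{Main obstacles.} The genuinely nontrivial ingredient is the Schauder estimate for the constant-coefficient Laplacian, together with its parabolic analogue for the heat kernel. Without H\"older continuity of the source the estimate simply fails, since Calder\'on--Zygmund theory produces only $L^p$ bounds on the singular integral in question. Once those kernel-level estimates are granted, the freezing step is mechanical, but it must be executed with careful tracking of how H\"older seminorms scale under dilations; this bookkeeping is exactly what inflates the cited proofs to book-length. Since the theorem has essentially no geometric content and is a purely analytic black box, quoting the references is the natural choice and the only sensible "proof" to propose here is: verify that the operators arising in Section~\ref{ltime} satisfy the hypotheses (uniform ellipticity, H\"older regularity of coefficients in local coordinates) and invoke the cited theorems.
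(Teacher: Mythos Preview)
Your proposal is correct and aligns with the paper's own treatment: the paper does not prove Theorem~\ref{ellireg} at all but simply refers the reader to \cite{fried}, \cite{gilbarg}, \cite{lieber} and \cite{wu}. Your sketch of the freezing-of-coefficients and bootstrap arguments is accurate background, but your concluding observation---that the right ``proof'' here is to verify the hypotheses and invoke the references---is exactly what the paper does.
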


Concerning the above mentioned results see \cite{fried}, \cite{gilbarg}, \cite{lieber} and  \cite{wu}.

\begin{rem}\label{schaurem}The Schauder estimates are used in Section \ref{ltime}; the local estimates presented here carry over, e.g. by using a partition of unity, to the entire manifold $\Sigma$ in Proposition  \ref{fundesti}. 
\end{rem} \end{appendix}

\end{document}